\newtheorem{theorem}{Theorem}[section]
\theoremstyle{definition}
\newtheorem{corollary}[theorem]{Corollary}
\newtheorem{lemma}[theorem]{Lemma}
\newtheorem{proposition}[theorem]{Proposition}
\newtheorem{question}[theorem]{Question}
\newtheorem{definition}[theorem]{Definition}
\newtheorem{example}[theorem]{Example}
\newtheorem{remark}[theorem]{Remark}
\numberwithin{equation}{section}
\def\id{\mathrm{id}}
\def\op{\mathrm{op}}
\def\fun{\mathrm{fun}}
\def\Fun{\mathrm{Fun}}
\def\Aut{\mathrm{Aut}}
\def\Imm{\mathrm{Im}\,}
\begin{document}
\sloppy

\hfill{17B38, 20D08 (MSC2020)}

\begin{center}
{\Large
Rota---Baxter operators on groups}

\smallskip

Valeriy G. Bardakov, Vsevolod Gubarev
\end{center}

\begin{abstract}
Theory of Rota---Baxter operators on rings and algebras
has been developed since 1960.
Recently, L. Guo, H. Lang, Y. Sheng [arXiv:2009.03492]
have defined the notion of Rota---Baxter operator on a group.
We provide some general constructions of Rota---Baxter operators on a group.
Given a map on a group, we study its extensions to a Rota---Baxter operator.
We state the connection between Rota---Baxter operators on a group and
Rota---Baxter operators on an associated Lie ring.
We describe Rota---Baxter operators on sporadic simple groups.

{\it Keywords}:
Rota---Baxter operator, Rota---Baxter group, simple group, sporadic group, factorization.
\end{abstract}

\section{Introduction}

Rota---Baxter operators for commutative algebras firstly appear in the paper of G.~Baxter~\cite{Baxter} in 1960.
Since that time the theory of Rota---Baxter operators has been developed by different authors including
J.-C. Rota, L. Guo, C. Bai, and K. Ebrahimi-Fard, see details in the monograph~\cite{GuoMonograph}.
Maybe, the most important role of Rota---Baxter operators is their connections to
Yang---Baxter equation~\cite{BelaDrin82,Semenov83}, Loday algebras~\cite{BBGN2011,Embedding},
double Poisson algebras~\cite{DoubleLie,Schedler} and others.

In September 2020~\cite{Guo2020}, L. Guo, H. Lang, Y. Sheng defined the notion of Rota---Baxter operator
(RB-operator, short) on a group. A map $B\colon G\to G$ is called a~Rota---Baxter operator (of weight~1) if
$B(g)B(h) = B( g B(g) h B(g)^{-1} )$ for all $g,h\in G$.
A~group~$G$ with a Rota---Baxter operator $B$ is called a~Rota---Baxter group.

The main motivation of~\cite{Guo2020} to introduce this new notion is following.
Given a Rota---Baxter Lie group $(G, B)$, then the tangent map of $B$
at the identity is a Rota---Baxter operator of weight 1 on the Lie algebra of the Lie group~$G$.
If a crossed homomorphism~$\phi$ from a group~$G$ to itself is invertible, then $\phi^{-1}$
is an RB-operator on~$G$~\cite{Guo2020}.
Some basic examples and properties of Rota---Baxter groups
were also stated in the pioneer work~\cite{Guo2020}.

In November 2020~\cite{Goncharov2020}, M. Goncharov introduced the notion of
a~Rota---Baxter operator on a~cocommutative Hopf algebra
as a combination of both Rota---Baxter operators on a~group and on a~Lie algebra.
In particular, every such Rota---Baxter operator on the Hopf algebra $\Bbbk[G]$
arises as a linear extension of an RB-operator defined on a group~$G$.

The {\bf main goal} of the current work is to give different constructions
of Rota---Baxter operators on a~group. We provide constructions via
factorizations, homomorphisms, projections, direct products, and ones defined by exact formulas (see~\S4--6).
In particular, we state in Proposition~\ref{prop:RB-Hom}
that any homomorphism from a group to its abelian subgroup
is a~Rota---Baxter operator.
In~\cite{AnBai,Braga,SumOfFields}, RB-operators on $\Bbbk^n$,
a~sum of $n$~copies of a~field~$\Bbbk$ considered as a commutative $\Bbbk$-algebra, were studied.
In Theorem~\ref{theo:directProduct}
we construct by an RB-operator~$R$ on $\Bbbk^n$
an RB-operator~$B$ on $G^n = G\times G\times \ldots \times G$ for a given group~$G$.

We study extensions of maps on a given group to an RB-operator and extensions of Rota---Baxter groups (see~\S7--8).
We find the necessary (Lemma~\ref{lem:RB-extCond}) and sufficient conditions (Theorem~\ref{thm:RBextension})
under which a~map~$\beta$ on a~group~$G$ can be extended to an RB-operator on~$G$.

Let $G$ be a group, it is well-known that by its lower central series $G_n$, $n=1,2,\ldots$,
one can associate Lie ring
$L(G) = \bigoplus_{n\geq1}G_n/G_{n+1}$.
We prove that by any RB-operator~$B$ on $G$ such that
$B(G_n)\subset G_n$ for all $n\geq1$,
we may define the RB-operator of weight~1 on $L(G)$ (Proposition~\ref{prop:LieRing}).

It is natural to ask about complete description of RB-operators on a~given group~$G$.
Naturally, one should do such classification up to action of automorphisms of~$G$ (see Lemma~\ref{lem:Aut})
and up to involution~$\tilde{}$ (see Lemma~\ref{lem:tilde}).
We make the following step in the direction of classification of RB-operators on finite simple groups:
we describe RB-operators on all sporadic simple groups.

Let us give short {\bf outline} of the work.
In~\S2, we give required preliminaries on Rota---Baxter operators on groups.
In~\S3, we consider the induced on a group~$G$ by an RB-operator~$B$ new group product
$g\circ h = gB(g)hB(g)^{-1}$. The group $G_B = \langle G,\circ\rangle$ was defined
in~\cite{Guo2020}.

In~\S4, we study constructions of RB-operators appeared due to exact factorizations
(double and triple) of groups
and properties of such constructions.

In~\S5, we give constructions of RB-operators
involved homomorphisms, projections and some exact formulas.
We find RB-operators on $n$-abelian groups (Proposition~\ref{prop:n-abelian})
and on 2-step nilpotent groups (Corollary~\ref{coro:2step}).
In~\S6, we study RB-operators on direct products of groups.

In~\S7, we study extensions of a given map~$\beta$ on a group~$G$
to an RB-operator~$B$ on~$G$.
Applying the necessary condition, we construct and study the auxiliary group~$\bar{G}_{\beta}$.
In~\S8, the problems concerning extensions of Rota---Baxter groups are posted.

In~\S9, we find a~connection between RB-operators on a group and RB-operators on its associated Lie ring.

In~\S10, we state that
on a simple group~$G$ without fixed-point-free automorphisms
every Rota---Baxter operator with trivial kernel
has the form $B(g) = g^{-1}$ (Theorem~\ref{thm:invertibleRB}).
Applying this result and the list of all factorizations of sporadic groups~\cite{Giudici},
we describe all RB-operators on all sporadic simple groups in Theorem~\ref{thm:sporadic}.

\section{Preliminaries}

Let $A$ be an algebra over a field~$\Bbbk$. A linear operator $R$ on $A$ is called
a Rota---Baxter operator of weight~$\lambda\in \Bbbk$ if
\begin{equation}\label{RBAlgebra}
R(x)R(y) = R( R(x)y + xR(y) + \lambda xy ).
\end{equation}
for all $x,y\in A$.
An algebra endowed with a Rota---Baxter operator is called a~Rota---Baxter algebra.

Let us consider an analogue of Rota---Baxter operator of weights~$\pm1$ on a group.

\newpage
\begin{definition}[\cite{Guo2020}]
Let $G$ be a group.

a) A map $B\colon G\to G$ is called a {\it Rota---Baxter operator} of weight~1 if
\begin{equation}\label{RB}
B(g)B(h) = B( g B(g) h B(g)^{-1} )
\end{equation}
for all $g,h\in G$.

b)  A map $C\colon G\to G$ is called a {\it Rota---Baxter operator} of weight~$-1$ if
$$
C(g) C(h) = C( C(g) h C(g)^{-1} g )
$$
for all $g,h\in G$.
\end{definition}

A group endowed with a Rota---Baxter operator is called a~{\it Rota---Baxter group} (RB-group).

\begin{lemma}
If $C\colon G \to G$ is a Rota---Baxter operator of weight $-1$ on a group $G$,
then the operator $\widetilde{C}(g) = g C(g^{-1})$, $g \in G$,
is also a Rota---Baxter operator of weight $-1$.
\end{lemma}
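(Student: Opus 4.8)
The plan is to verify the defining weight $-1$ identity directly for $\widetilde{C}$ and to reduce it, by cancelling common factors, to the very same identity for $C$ evaluated at the inverted arguments $g^{-1},h^{-1}$. Concretely, I must show that $\widetilde{C}(g)\widetilde{C}(h)=\widetilde{C}\bigl(\widetilde{C}(g)\,h\,\widetilde{C}(g)^{-1}g\bigr)$ holds for all $g,h\in G$, where $\widetilde{C}(g)=gC(g^{-1})$.

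First I would substitute the formula for $\widetilde{C}$ into the inner argument and simplify it. Since $\widetilde{C}(g)^{-1}=C(g^{-1})^{-1}g^{-1}$, the trailing factor $g$ in $\widetilde{C}(g)\,h\,\widetilde{C}(g)^{-1}g$ cancels the $g^{-1}$ coming from $\widetilde{C}(g)^{-1}$; this is the one step that actually needs to be seen, and it is what makes the whole computation collapse. The argument thus reduces to $w:=gC(g^{-1})\,h\,C(g^{-1})^{-1}$, with inverse $w^{-1}=C(g^{-1})\,h^{-1}\,C(g^{-1})^{-1}g^{-1}$. Expanding the right-hand side gives $\widetilde{C}(w)=w\,C(w^{-1})=gC(g^{-1})\,h\,C(g^{-1})^{-1}\,C\bigl(C(g^{-1})\,h^{-1}\,C(g^{-1})^{-1}g^{-1}\bigr)$, while the left-hand side is simply $\widetilde{C}(g)\widetilde{C}(h)=gC(g^{-1})\,h\,C(h^{-1})$.

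Then I would cancel the common prefix $gC(g^{-1})\,h$ from both sides and left-multiply by $C(g^{-1})$, which turns the required equality into $C(g^{-1})C(h^{-1})=C\bigl(C(g^{-1})\,h^{-1}\,C(g^{-1})^{-1}g^{-1}\bigr)$. This is precisely the weight $-1$ Rota--Baxter identity for $C$ with $g$ and $h$ replaced by $g^{-1}$ and $h^{-1}$, so it holds by hypothesis and the proof closes. I do not anticipate a genuine obstacle here: the only delicate point is the bookkeeping of inverses in the inner argument, and once the trailing $g$ is cancelled the statement is an immediate consequence of the defining relation applied to inverted arguments.
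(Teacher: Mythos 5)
Your proof is correct: the inner argument does collapse to $w=gC(g^{-1})\,h\,C(g^{-1})^{-1}$ after the trailing $g$ cancels, and after removing the common prefix $gC(g^{-1})h$ and left-multiplying by $C(g^{-1})$, the required identity becomes exactly the weight $-1$ Rota--Baxter relation for $C$ evaluated at $(g^{-1},h^{-1})$. The paper states this lemma without any proof (it is the weight $-1$ analogue of Lemma~\ref{lem:tilde}, cited from~\cite{Guo2020}), so your direct verification is precisely the routine computation the authors leave to the reader, and it fills that omission correctly.
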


In \cite{Guo2020} it was proved that if $B$ is a Rota---Baxter operator of weight~1
on a group~$G$, then the map $C(g) = B(g^{-1})$ is an Rota---Baxter operator of weight $-1$.
Another connection between Rota---Baxter operators of weights~1 and~$-1$ gives

\begin{proposition}
If $B\colon G \to G$ is Rota---Baxter operator of weight~1 on a group $G$,
then the operator $C(g) = g B(g)$, $g \in G$, is a Rota---Baxter operator of weight $-1$ on $G$.
\end{proposition}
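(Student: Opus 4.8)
The plan is to verify the weight $-1$ defining identity for $C(g)=gB(g)$ by a direct substitution, reducing the inner group word with the help of the weight-1 identity~\eqref{RB} for $B$. First I would record the two sides of the identity to be checked. The left-hand side is simply
\[
C(g)C(h) = gB(g)\,hB(h).
\]
For the right-hand side I must evaluate $C$ on the word $w := C(g)\,h\,C(g)^{-1}\,g$.

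The crucial step is to simplify $w$. Using $C(g)^{-1} = B(g)^{-1}g^{-1}$, I get
\[
w = gB(g)\,h\,B(g)^{-1}g^{-1}\,g = gB(g)\,h\,B(g)^{-1},
\]
where the trailing $g^{-1}g$ cancels. This cancellation is the whole point: it turns $w$ into exactly the argument appearing on the right-hand side of the weight-1 identity~\eqref{RB}, so that $B(w)=B\bigl(gB(g)hB(g)^{-1}\bigr)=B(g)B(h)$.

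It then remains to assemble $C(w)=wB(w)$ and compare with the left side. Substituting the two expressions just obtained,
\[
C(w) = w\,B(w) = gB(g)\,h\,B(g)^{-1}\cdot B(g)B(h) = gB(g)\,h\,B(h) = C(g)C(h),
\]
where I have used $B(g)^{-1}B(g)=e$. This is precisely the weight $-1$ condition, so $C$ is a Rota---Baxter operator of weight $-1$.

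I do not expect any real obstacle here: the computation is short and self-contained, requiring nothing beyond the two definitions. The only point that demands attention is the simplification of the inner word $w$, namely recognizing that the final $g$ in $C(g)\,h\,C(g)^{-1}\,g$ cancels the $g^{-1}$ coming from $C(g)^{-1}$; this is what produces the weight-1 word $gB(g)hB(g)^{-1}$ and thereby lets the weight-1 identity be invoked. Once that is seen, everything collapses.
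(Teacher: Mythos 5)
Your proof is correct and follows essentially the same route as the paper's: substitute $C(g)=gB(g)$ into the weight $-1$ identity, observe that the trailing $g^{-1}g$ cancels so the inner word becomes $gB(g)hB(g)^{-1}$, and then reduce the whole identity to the weight-1 identity~\eqref{RB} for $B$. The only difference is presentational --- you spell out the cancellations that the paper's terse ``it is equivalent to'' leaves implicit.
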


\begin{proof}
One needs to check~\eqref{RB}. By the definition of $C$ it is equivalent to
$$
g B(g) h B(h) = C(g B(g) h B(g)^{-1}),
$$
i.\,e.,
$$
B(g) B(h) = B( g B(g) h B(g)^{-1} ). \qedhere
$$
\end{proof}

Hence, we have a bijection between Rota---Baxter operators of weight 1 and $-1$
on a~group $G$. To the end of the paper we will consider only Rota---Baxter operators
of weight 1 and will call them simply Rota---Baxter operators (RB-operators).

\begin{example}
Let $G$ be a group. Then

a) the map $B_0(g) = e$ is an RB-operator on $G$,

b) the map $B_{-1}(g) = g^{-1}$ is an RB-operator on $G$.
\end{example}

We will call the Rota---Baxter operators $B_0$ and $B_{-1}$ as {\it elementary RB-operators}.
A~group $G$ is called {\it RB-elementary}, if any RB-operator on $G$ is elementary.
It is evident that every cyclic group of prime order is elementary.
Other examples of elementary groups will be given in the last section.

Given a group $G$, we use the usual notation of the commutator $[g,h] = g^{-1}h^{-1}gh$
for $g,h\in G$; the commutator subgroup $[H,L]$ for subgroups $H,L$ of~$G$
is a~subgroup that is generated by the set of commutators $\{[h,l]\mid h\in H,\,l\in L\}$.

Let us state some elementary facts about RB-operators.

\begin{lemma}\label{lem:elementary}
If $B\colon G \to G$ is a Rota---Baxter operator on a group $G$, then

a) $B(e) = e$;

b) $B(g)B(g^{-1}) = B([g^{-1},B(g)^{-1}])$;

c) $B(g)B(B(g)) =  B(g B(g))$;

d) $B(g)^{-1} = B(B(g)^{-1}g^{-1}B(g))$ for any $g \in G$.
\end{lemma}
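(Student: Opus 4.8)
The plan is to derive all four identities as special cases of the defining relation~\eqref{RB}, using only well-chosen values of the arguments $g$ and $h$, with part~(a) serving as the base case for the remaining parts.

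For part~(a), I would set $g=h=e$ in~\eqref{RB}. The right-hand side collapses to $B(e)$, so $B(e)^2=B(e)$, and cancelling in the group gives $B(e)=e$. For part~(b), I would substitute $h=g^{-1}$ into~\eqref{RB}; the argument on the right becomes $gB(g)g^{-1}B(g)^{-1}$, which is exactly the commutator $[g^{-1},B(g)^{-1}]$ under the stated convention $[a,b]=a^{-1}b^{-1}ab$, yielding the claim immediately. For part~(c), I would substitute $h=B(g)$; then $gB(g)hB(g)^{-1}=gB(g)B(g)B(g)^{-1}=gB(g)$, so that~\eqref{RB} reads $B(g)B(B(g))=B(gB(g))$.

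The only step requiring a genuine idea is part~(d), which I expect to be the main obstacle. Here I would substitute $h=B(g)^{-1}g^{-1}B(g)$ into~\eqref{RB}. A direct cancellation shows that the argument $gB(g)\cdot B(g)^{-1}g^{-1}B(g)\cdot B(g)^{-1}$ telescopes to $e$, so the right-hand side equals $B(e)=e$ by part~(a), and~\eqref{RB} then becomes $B(g)\,B(B(g)^{-1}g^{-1}B(g))=e$, which is the desired identity. The motivation for this particular choice of $h$ — and the thing that is hard to guess without hindsight — is that $B(g)^{-1}g^{-1}B(g)$ is precisely the inverse of $g$ with respect to the induced product $g\circ h=gB(g)hB(g)^{-1}$ of~\S3; since~\eqref{RB} expresses that $B$ carries $\circ$ into the group multiplication, it must send this $\circ$-inverse of $g$ to $B(g)^{-1}$. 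Note, however, that the actual verification of part~(d) uses only~\eqref{RB} together with part~(a), so it does not depend on any later material.
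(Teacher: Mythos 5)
Your proof is correct and coincides with the paper's own argument: each part is obtained by the same substitutions into~\eqref{RB} ($g=h=e$; $h=g^{-1}$; $h=B(g)$; $h=B(g)^{-1}g^{-1}B(g)$), with the telescoping cancellation in part~(d) matching the paper's computation exactly. Nothing further is needed.
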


\begin{proof}
a) It follows from~\eqref{RB} considered with $g = h = e$.

b) It follows from~\eqref{RB} considered with $h = g^{-1}$.

c) It follows from~\eqref{RB} considered with $h = B(g)$.

d) It follows from the equalities
\begin{equation}\label{R(a)^-1}
B(g)B(B(g)^{-1}g^{-1}B(g))
 {=} B(g\underline{B(g)B(g)^{-1}}g^{-1}\underline{B(g)B(g)^{-1}})
 {=} B(gg^{-1})
 = B(e)
 = e.
\end{equation}
Lemma is proved.
\end{proof}

In~\cite{Guo2020} it was noted (without proof) that
given a group $G$ and a Rota---Baxter operator~$B$ on $G$
the sets $\ker B$ and $\Imm B$ are subgroups of $G$.
Let us write down the proof for convenience.

\begin{lemma}[\cite{Guo2020}]\label{KerImSub}
Let $B$ be a Rota---Baxter operator on a group~$G$. Then

a) $\ker B$ is a subgroup of $G$,

b) $\Imm B$ is a subgroup of $G$.
\end{lemma}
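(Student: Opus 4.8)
The plan is to verify in each case the two subgroup conditions---closure under the product and under inversion---directly from the defining identity~\eqref{RB} together with the elementary facts already collected in Lemma~\ref{lem:elementary}. In both cases the identity element is present from the outset, since $B(e)=e$ by Lemma~\ref{lem:elementary}(a).

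For part a), the key observation I would exploit is that the relation~\eqref{RB} degenerates nicely when $B(g)=e$: in that case $gB(g)hB(g)^{-1}=gh$ and $B(g)B(h)=B(h)$, so~\eqref{RB} collapses to $B(gh)=B(h)$ for every $h\in G$. Granting this, closure of $\ker B$ under multiplication is immediate: if $B(g)=B(h)=e$, then $B(gh)=B(h)=e$. Closure under inversion falls out of the same degenerate identity applied with $h=g^{-1}$, which gives $B(g^{-1})=B(gg^{-1})=B(e)=e$. Thus $\ker B$ is closed under products and inverses and contains $e$, so it is a subgroup.

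For part b), closure under multiplication is essentially built into~\eqref{RB}: for any $g,h\in G$ the product $B(g)B(h)$ equals $B(gB(g)hB(g)^{-1})$, which is again a value of $B$ and hence lies in $\Imm B$. Closure under inversion is supplied directly by Lemma~\ref{lem:elementary}(d), which exhibits $B(g)^{-1}$ as the value $B(B(g)^{-1}g^{-1}B(g))$; therefore the inverse of any element of $\Imm B$ again lies in $\Imm B$, and $\Imm B$ is a subgroup.

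I do not anticipate a genuine obstacle: once Lemma~\ref{lem:elementary} is available, both assertions reduce to a one-line substitution into~\eqref{RB}. The only point deserving a little care is the inversion step for $\ker B$---one must notice that the collapsed form of~\eqref{RB} already yields it, rather than invoking Lemma~\ref{lem:elementary}(b), whose commutator $[g^{-1},B(g)^{-1}]$ one would otherwise need to check reduces to $e$ when $B(g)=e$.
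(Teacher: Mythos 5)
Your proposal is correct and follows essentially the same route as the paper: both arguments specialize the identity~\eqref{RB} at kernel elements (where it collapses to $B(gh)=B(h)$) to get closure of $\ker B$ under products and inverses, and both obtain closure of $\Imm B$ from~\eqref{RB} directly together with Lemma~\ref{lem:elementary}d. The only cosmetic difference is that you isolate the collapsed identity as an explicit intermediate step, which the paper records separately (and slightly later) as Lemma~\ref{kernelCosets}.
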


\begin{proof}
a) We have $B(e) = e$ by~Lemma~\ref{lem:elementary}a.
Let $g,h\in \ker B$, then by~\eqref{RB}
\begin{gather*}
B(g^{-1}) = B(g)B(g^{-1}) = B(gB(g)g^{-1}B(g)^{-1}) = B(e) = e, \\
e = B(g)B(h) = B(gB(g)hB(g)^{-1}) = B(gh).
\end{gather*}

b) Directly by~\eqref{RB} we have that if $g,h\in \Imm B$, then $gh\in \Imm B$.
Suppose that $g\in \Imm B$, then $g^{-1}\in \Imm B$ by~Lemma~\ref{lem:elementary}d.
\end{proof}

When a Rota---Baxter operator $B$ on a group $G$ is invertible, i.\,e.,
the map $B^{-1}\colon G\to G$ exists, then $B^{-1}$
is a~crossed homomorphism from $G$ to itself~\cite{Guo2020}.
The following result can be useful in attempt to construct
RB-operators $B$ on a given group with $\ker B\neq\{e\}$.

\begin{lemma} \label{kernelCosets}
Let $B$ be a Rota---Baxter operator on a group~$G$.
If $B(g) = e$ for some $g\in G$, then $B(h) = B(gh)$ for any $h\in G$. In particular, if
$$
G = \coprod_{i \in I} \ker(B)g_i
$$
is the decomposition of $G$ in the disjoint union of cosets,
then $B(x) = B(y)$ if $x$ and $y$ lie in the same coset.
\end{lemma}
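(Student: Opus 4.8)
The plan is to substitute the hypothesis $B(g)=e$ directly into the defining identity~\eqref{RB} and read off the first assertion with no further work. First I would record that $B(g)=e$ forces $B(g)^{-1}=e$ as well, and then write~\eqref{RB} for this fixed $g$ and an arbitrary $h\in G$. On the left-hand side $B(g)B(h)$ collapses to $B(h)$, while the twisted argument on the right-hand side simplifies as $gB(g)hB(g)^{-1}=g\,e\,h\,e^{-1}=gh$. Hence $B(h)=B(gh)$, which is precisely the first claim.

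For the ``in particular'' statement I would apply the first claim repeatedly, taking kernel elements in the role of~$g$. By Lemma~\ref{KerImSub}a the set $\ker B$ is a subgroup, so the coset decomposition $G=\coprod_{i\in I}\ker(B)g_i$ makes sense. If $x,y$ lie in the same right coset $\ker(B)g_i$, write $x=k_1 g_i$ and $y=k_2 g_i$ with $k_1,k_2\in\ker B$. Since $B(k_1)=B(k_2)=e$, the first claim (used with $g=k_j$ and $h=g_i$) yields $B(x)=B(k_1 g_i)=B(g_i)$ and $B(y)=B(k_2 g_i)=B(g_i)$, so that $B(x)=B(g_i)=B(y)$.

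There is essentially no obstacle here: the whole content is the observation that $B$ vanishing at~$g$ annihilates both occurrences of $B(g)$ in~\eqref{RB}, turning the twisted argument $gB(g)hB(g)^{-1}$ into the plain product $gh$. The only point meriting a word of care is the side on which the cosets sit---because left multiplication by a kernel element preserves the value of~$B$, the invariance is constant along the right cosets $\ker(B)g_i$, exactly matching the stated decomposition.
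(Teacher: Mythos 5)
Your proof is correct and follows exactly the paper's approach: the paper's own proof is the one-line remark that the claim follows from~\eqref{RB} with this $g$ and $h$, which is precisely your substitution $B(g)=e$ turning $gB(g)hB(g)^{-1}$ into $gh$. Your elaboration of the coset statement (writing $x=k_1g_i$, $y=k_2g_i$ and applying the first claim with $g=k_j$, $h=g_i$) just fills in the routine details the paper leaves implicit.
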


\begin{proof}
It follows from~\eqref{RB} considered with such $g$ and $h$.
\end{proof}

The following claim gives a possibility to construct new RB-operators from the known ones.

\begin{lemma}[\cite{Guo2020}]\label{lem:tilde}
Let $B$ be a Rota---Baxter operator on a group~$G$.
Then the operator $\widetilde{B}(g) = g^{-1}B(g^{-1})$
is also a~Rota---Baxter operator on a group~$G$.
\end{lemma}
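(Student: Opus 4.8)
The plan is to verify the defining identity \eqref{RB} for $\widetilde{B}$ directly, reducing every term back to an instance of the identity already known to hold for $B$. The key preliminary observation is the simplification $g\widetilde{B}(g) = g\cdot g^{-1}B(g^{-1}) = B(g^{-1})$, together with $\widetilde{B}(g)^{-1} = B(g^{-1})^{-1}g$. Using these, the argument appearing on the right-hand side of the RB identity for $\widetilde{B}$ collapses nicely:
\[
w := g\,\widetilde{B}(g)\,h\,\widetilde{B}(g)^{-1} = B(g^{-1})\,h\,B(g^{-1})^{-1}g.
\]
This is the main structural step; everything afterwards is bookkeeping.

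Next I would compute $\widetilde{B}(w) = w^{-1}B(w^{-1})$. From the formula for $w$ one gets $w^{-1} = g^{-1}B(g^{-1})\,h^{-1}B(g^{-1})^{-1}$, and the point is that the inner expression
\[
w^{-1} = g^{-1}\,B(g^{-1})\,h^{-1}\,B(g^{-1})^{-1}
\]
is exactly the argument occurring on the right-hand side of \eqref{RB} for $B$ evaluated at the pair $(g^{-1},h^{-1})$. Hence $B(w^{-1}) = B\bigl(g^{-1}B(g^{-1})h^{-1}B(g^{-1})^{-1}\bigr) = B(g^{-1})B(h^{-1})$, where I simply invoke the Rota---Baxter property of $B$ with the substitution $g\mapsto g^{-1}$, $h\mapsto h^{-1}$.

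Substituting this back, the factor $B(g^{-1})^{-1}B(g^{-1})$ cancels and I obtain
\[
\widetilde{B}(w) = g^{-1}B(g^{-1})\,h^{-1}\,B(h^{-1}) = \widetilde{B}(g)\,\widetilde{B}(h),
\]
which is precisely the left-hand side of \eqref{RB} for $\widetilde{B}$. This establishes the claim. There is no genuine obstacle here: the entire argument is an elementary manipulation, and the only piece of real content is recognizing, after inverting $w$, that $w^{-1}$ matches an instance of the original identity for $B$ at the inverted arguments $(g^{-1},h^{-1})$. The simplification $g\widetilde{B}(g) = B(g^{-1})$ is what makes the computation short, so I would foreground it at the start.
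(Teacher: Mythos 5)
Your proof is correct. The paper states Lemma~\ref{lem:tilde} without proof (it is cited from~\cite{Guo2020}), so there is no in-paper argument to compare against; your direct verification --- using $g\widetilde{B}(g)=B(g^{-1})$ and $\widetilde{B}(g)^{-1}=B(g^{-1})^{-1}g$ to recognize $w^{-1}$ as the argument of the Rota---Baxter identity for $B$ at the pair $(g^{-1},h^{-1})$ --- is the natural argument and checks out at every step.
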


In particular, $\widetilde{B_0} = B_{-1}$.
It is easy to see that $\widetilde{\!\widetilde{B}} = B$.

The following observation is very important when
one is interested on the classification of
all RB-operators on a given group.

\begin{lemma}\label{lem:Aut}
Let $B$ be a Rota---Baxter operator on a group~$G$.
Let $\varphi$ be an automorphism of $G$.
Then $B^{(\varphi)} = \varphi^{-1}B\varphi$
is a~Rota---Baxter operator on a~group~$G$.
\end{lemma}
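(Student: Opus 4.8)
The plan is to verify the defining identity~\eqref{RB} directly for the operator $C := B^{(\varphi)} = \varphi^{-1}B\varphi$, exploiting that both $\varphi$ and $\varphi^{-1}$ are group homomorphisms. Writing $C(g) = \varphi^{-1}(B(\varphi(g)))$, I would first expand the left-hand side. Since $\varphi^{-1}$ is a homomorphism, the product pulls inside,
$$
C(g)C(h) = \varphi^{-1}(B(\varphi(g)))\,\varphi^{-1}(B(\varphi(h))) = \varphi^{-1}\bigl(B(\varphi(g))B(\varphi(h))\bigr),
$$
and then the Rota---Baxter identity for $B$ applied to the pair $(\varphi(g),\varphi(h))$ rewrites the inner product as $B\bigl(\varphi(g)B(\varphi(g))\varphi(h)B(\varphi(g))^{-1}\bigr)$.

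Next I would expand the right-hand side $C\bigl(gC(g)hC(g)^{-1}\bigr)$. The one computation worth doing is applying $\varphi$ to the argument: because $\varphi$ is a homomorphism and $\varphi(C(g)) = \varphi\bigl(\varphi^{-1}(B(\varphi(g)))\bigr) = B(\varphi(g))$, the composition $\varphi\circ\varphi^{-1}$ cancels and one obtains
$$
\varphi\bigl(gC(g)hC(g)^{-1}\bigr) = \varphi(g)B(\varphi(g))\varphi(h)B(\varphi(g))^{-1}.
$$
Feeding this into $C = \varphi^{-1}\circ B\circ\varphi$ yields precisely the expression found on the left-hand side, which closes the argument.

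The only point requiring care — and it is really the crux, though not a serious obstacle — is the cancellation $\varphi(\varphi^{-1}(x)) = x$ inside the conjugated product, which guarantees that the transported data on the right matches what is obtained on the left. Since nothing beyond the homomorphism property of $\varphi^{\pm1}$ and identity~\eqref{RB} is used, the verification is a routine substitution, and I would anticipate no genuine difficulty.
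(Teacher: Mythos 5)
Your verification is correct and is essentially the paper's own argument: both rewrite everything in terms of $B$ and $\varphi^{\pm1}$, use the homomorphism property to pull products through $\varphi^{-1}$ (equivalently, the paper applies $\varphi$ to both sides and cancels $\varphi\varphi^{-1}$), and then invoke the identity~\eqref{RB} for $B$ at the pair $(\varphi(g),\varphi(h))$. No gap; the two proofs differ only in presentation.
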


\begin{proof}
The following equalities imply the statement,
\begin{multline*}
\varphi( B^{(\varphi)}(g) B^{(\varphi)}(h) )
 = \varphi( \varphi^{-1}(B(\varphi(g)))\varphi^{-1}(B(\varphi(h))) )
 = B(\varphi(g))B(\varphi(h)) \\
 = B(\varphi(g)B(\varphi(g))\varphi(h)B(\varphi(g))^{-1} )
 = \varphi\varphi^{-1}B\varphi( g\varphi^{-1}B(\varphi(g))h\varphi^{-1}(B(\varphi(g))^{-1}) ) \\
 = \varphi( B^{(\varphi)}(gB^{(\varphi)}(g)h(B^{(\varphi)}(g))^{-1}) ). \qedhere
\end{multline*}
\end{proof}

Thus, we may describe Rota---Baxter operators
on a~group up to conjugation with its automorphism.

\begin{lemma}\label{tildeB-properties}
Let $G$ be a group, let $\varphi$ be an automorphism $\varphi$ of $G$
and let $B$ be a~Rota---Baxter operator on $G$. Then
$(\widetilde{B})^{(\varphi)} = \widetilde{B^{(\varphi)}}$.
\end{lemma}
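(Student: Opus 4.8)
The plan is to prove the identity by direct computation: I would evaluate both $(\widetilde{B})^{(\varphi)}$ and $\widetilde{B^{(\varphi)}}$ at an arbitrary element $g\in G$ and show the two resulting expressions coincide. The only facts needed are the defining formulas $\widetilde{B}(x) = x^{-1}B(x^{-1})$ from Lemma~\ref{lem:tilde} and $B^{(\varphi)}(x) = \varphi^{-1}(B(\varphi(x)))$ from Lemma~\ref{lem:Aut}, together with the elementary properties of an automorphism, namely that $\varphi$ and $\varphi^{-1}$ are homomorphisms and that each commutes with inversion, so that $\varphi(g)^{-1} = \varphi(g^{-1})$.

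First I would expand the left-hand side. Applying the definition of the $\varphi$-twist to the operator $\widetilde{B}$ gives
\[
(\widetilde{B})^{(\varphi)}(g) = \varphi^{-1}\bigl(\widetilde{B}(\varphi(g))\bigr)
 = \varphi^{-1}\bigl(\varphi(g)^{-1}B(\varphi(g)^{-1})\bigr).
\]
Using $\varphi(g)^{-1} = \varphi(g^{-1})$ and distributing the homomorphism $\varphi^{-1}$ over the product, this becomes $g^{-1}\varphi^{-1}(B(\varphi(g^{-1})))$.

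Next I would expand the right-hand side. Writing $D = B^{(\varphi)}$ and applying the definition of the tilde operation to $D$, one gets
\[
\widetilde{B^{(\varphi)}}(g) = g^{-1}B^{(\varphi)}(g^{-1}) = g^{-1}\varphi^{-1}\bigl(B(\varphi(g^{-1}))\bigr).
\]
Since the two computed expressions are literally identical, the claimed equality of operators follows once $g$ is arbitrary.

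There is essentially no hard step here: the statement is a compatibility relation between two operations (the involution and the conjugation-by-automorphism) that were each introduced in the two immediately preceding lemmas, and the proof is a short bookkeeping calculation. The one point demanding care is the repeated use of the homomorphism property of $\varphi^{-1}$ and the commutation of $\varphi$ with inversion; keeping the order of factors correct when passing $\varphi^{-1}$ through the product is the only place where a slip could occur, but no genuine obstacle arises.
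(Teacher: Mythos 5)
Your proof is correct and is essentially the paper's own argument: both expand $(\widetilde{B})^{(\varphi)}(g)$ via the definitions of the tilde and $\varphi$-twist operations, use that $\varphi^{-1}$ is a homomorphism commuting with inversion, and arrive at $g^{-1}B^{(\varphi)}(g^{-1}) = \widetilde{B^{(\varphi)}}(g)$. The only cosmetic difference is that you expand both sides and meet in the middle, while the paper writes a single chain of equalities.
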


\begin{proof}
We have
$$
(\widetilde{B})^{(\varphi)} (g)
 = \varphi^{-1}( \widetilde{B} (\varphi(g))) )
 = \varphi^{-1}( (\varphi(g))^{-1}B( (\varphi(g))^{-1} ) )
 = g^{-1}B^{(\varphi)}(g^{-1})
 = \widetilde{B^{(\varphi)}}(g). \qedhere
$$
\end{proof}

Let $(G, B)$ and $(G', B')$ be Rota---Baxter groups. A map $\Phi \colon G \to G'$
is a Rota---Baxter group homomorphism if $\Phi$ is a group homomorphism such that
$\Phi B = B'\Phi$.

A group $G$ is called factorizable if $G = HL$ for some its subgroups $H$ and $L$.
The expression $G = HL$ is called a factorization of $G$.
If additionally $H\cap L = \{e\}$, then such factorization is called exact.
If $H$ and $L$ are proper subgroups of~$G$, we call a factorization $G = HL$ as a proper one.

\section{Rota---Baxter induced group}

In \cite{Guo2020} a new binary operation $\circ \colon G \to G$
on a~Rota---Baxter group $(G,B)$ was defined.

\begin{proposition}[\cite{Guo2020}]\label{prop:Derived}
Let $(G,\cdot,B)$ be a Rota---Baxter group.

a) The pair $(G, \circ )$ with the product
\begin{equation}\label{R-product}
g\circ h = gB(g)hB(g)^{-1},
\end{equation}
where $g,h\in G$, is also a group.

b) The operator $B$ is a Rota---Baxter operator on the group $(G,\circ)$.

c) The map $B\colon (G,\circ) \to (G,\cdot)$
is a homomorphism of Rota---Baxter groups.
\end{proposition}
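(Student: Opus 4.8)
The plan is to verify the three
assertions in sequence, since each builds on the data introduced by the
previous one. For part (a), I must show that $(G,\circ)$ with
$g\circ h = gB(g)hB(g)^{-1}$ is a group. First I would exhibit the
neutral element: using $B(e)=e$ from Lemma~\ref{lem:elementary}a, one
checks that $e$ satisfies $e\circ h = h$ and $g\circ e = g$. Next I would
guess and verify a formula for the inverse of $g$ under $\circ$; the
natural candidate is $g^{\,\circ-1} = B(g)^{-1}g^{-1}B(g)$, and plugging
it into $g\circ g^{\,\circ-1}$ should collapse to $e$ after cancelling the
$B(g)B(g)^{-1}$ pairs. The real content is associativity, and this is
where I expect the main obstacle: expanding $(g\circ h)\circ k$ and
$g\circ(h\circ k)$ both produce expressions of the form
$gB(g)hB(g)^{-1}\cdots$ in which the arguments of $B$ are themselves
products, so the defining identity~\eqref{RB} must be invoked to rewrite
$B(gB(g)hB(g)^{-1})$ as $B(g)B(h)$. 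The crucial computation is to
recognise that $B(g\circ h) = B(g)B(h)$ \emph{is exactly}
equation~\eqref{RB}, which turns the nested $B$-terms into a clean product
of $B$-values; once this substitution is made, both associated triples
should reduce to the same word in $g,h,k,B(g),B(h)$ and their inverses.

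For part (b), I would show $B$ satisfies the Rota--Baxter identity with
respect to $\circ$, namely
$B(g)\circ B(h) = B\bigl(g\circ B(g)\circ h\circ B(g)^{\,\circ-1}\bigr)$.
The left-hand side unwinds via the definition of $\circ$ to
$B(g)B(B(g))B(h)B(B(g))^{-1}$, and here I expect to lean on
Lemma~\ref{lem:elementary}c, which gives $B(g)B(B(g)) = B(gB(g))$. The
right-hand side requires carefully translating the $\circ$-word inside the
argument back into the original product; using the inverse formula from
part (a) for $B(g)^{\,\circ-1}$ and repeatedly applying the observation
$B(g\circ h)=B(g)B(h)$, the argument should simplify until the identity
matches. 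I anticipate the bookkeeping of conjugating factors to be
tedious but mechanical once part (a) is in hand.

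Finally, part (c) is the most conceptual and the quickest to dispatch.
The claim that $B\colon (G,\circ)\to(G,\cdot)$ is a homomorphism is
precisely the statement $B(g\circ h)=B(g)B(h)$, which, as noted above, is
a verbatim restatement of the defining equation~\eqref{RB}; so this part
requires essentially no new work beyond recording the identification. In
summary, the whole proposition rests on the single observation that the
Rota--Baxter identity~\eqref{RB} says exactly that $B$ linearises the
$\circ$-product, and I would foreground this early so that parts (a) and
(b) both reduce to routine (if lengthy) word manipulations in which every
nested $B$ is flattened by~\eqref{RB}. The hardest single step remains the
associativity verification in part~(a).
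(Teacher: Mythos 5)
The paper states this proposition without proof, attributing it to \cite{Guo2020}, so there is no in-paper argument to compare against; your plan is essentially the standard one and it is correct. Your central observation --- that $B(g\circ h)=B(g)B(h)$ is a verbatim restatement of~\eqref{RB} --- is exactly the right lever: it flattens $(g\circ h)\circ k$ and $g\circ(h\circ k)$ to the common word $gB(g)hB(h)kB(h)^{-1}B(g)^{-1}$, giving associativity, and it makes part~(c) immediate (note that the intertwining condition $\Phi B = B'\Phi$ in the definition of an RB-group homomorphism is trivially satisfied here, since $\Phi=B$ and $B'=B$). Two small points are worth making explicit when you write this up. First, your verification $g\circ g^{\circ(-1)}=e$ only gives a right inverse; either invoke the standard fact that a monoid with right inverses is a group (legitimate, since you prove associativity first), or check the left inverse directly, which needs $B(g^{\circ(-1)})=B(g)^{-1}$, i.e.\ Lemma~\ref{lem:elementary}d. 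Second, in part~(b) you do not actually need Lemma~\ref{lem:elementary}c: expanding the right-hand side $B\bigl(g\circ B(g)\circ h\circ B(g)^{\circ(-1)}\bigr)$ by the homomorphism property gives $B(g)B(B(g))B(h)B\bigl(B(g)^{\circ(-1)}\bigr)$, and since a homomorphism sends $\circ$-inverses to $\cdot$-inverses, the last factor is $B(B(g))^{-1}$; this matches the left-hand side $B(g)\circ B(h)=B(g)B(B(g))B(h)B(B(g))^{-1}$ with no further work.
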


We will  denote the group $(G,\circ)$ as $G_B$.

Given a group~$G$ and a Rota---Baxter operator~$B$ on~$G$,
consider a map $B_+$ on $G$ defined as follows, $B_+(g) = gB(g)$~\cite{Guo2020}.
Note that $B_+$ is a group homomorphism from $G_B$ to $G$.
Lemma~\ref{lem:elementary}c says that $B_+B = BB_+$.

\begin{example}
Let $G$ be a group with a Rota---Baxter operator $B$.

a) If $B = B_0$, then
$$
g\circ h = gB_0(g)hB_0(g)^{-1} = g h.
$$
Hence, both products $\circ$ and $\cdot$ coincide.

b) If $B = B_{-1}$, then
$$
g\circ h = gB_{-1}(g) h B_{-1}(g)^{-1} = h g.
$$
Hence, $(G, \circ)$ is a group with the opposite product.

c) If $G$ is abelian, then $g\circ h = g\cdot h$ for all $g,h\in G$.
\end{example}

In~\cite{Guo2020}, L. Guo, H. Lang, and Yu. Sheng proved the results analogous
to the ones stated by M.A.~Semenov-Tyan-Shanskii for Lie algebras~\cite{Semenov83}.

\begin{proposition}[\cite{Guo2020}] If $(G, B)$ is an RB-group, then

a) $\ker(B)$ and $\ker(B_+)$ are normal in $G_B$;

b) $\ker(B) \unlhd \Imm(B_+)$ and $\ker(B_+) \unlhd \Imm(B)$ in $G$;

c) We have the isomorphism of quotient groups
\begin{equation}\label{FactorIso}
\Imm(B_+)/\ker(B)\cong \Imm(B)/\ker(B_+);
\end{equation}

d) We have the factorization
\begin{equation}\label{ImageFactorization}
G = \Imm(B_+)\Imm(B).
\end{equation}
\end{proposition}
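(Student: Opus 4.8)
The plan rests on two structural facts recorded just above the statement: both $B\colon G_B\to G$ and $B_+\colon G_B\to G$ are group homomorphisms (Proposition~\ref{prop:Derived}c and the remark that $B_+$ is a homomorphism from $G_B$ to $G$), and they commute by Lemma~\ref{lem:elementary}c. Writing $\bar g$ for the inverse of $g$ in $G_B$, the homomorphism property gives $B(\bar g)=B(g)^{-1}$ and $B_+(\bar g)=B_+(g)^{-1}$. Part (a) is then immediate: the kernels of the homomorphisms $B$ and $B_+$ out of $G_B$ are normal subgroups of $G_B$, which is exactly what is claimed. Before tackling (b) I would record two restriction facts that drive everything: if $B(k)=e$ then $B_+(k)=kB(k)=k$, so $B_+$ is the identity on $\ker B$ and in particular $\ker B\subseteq\Imm B_+$; and if $B_+(m)=e$ then $B(m)=m^{-1}$, so $B$ restricts to inversion on $\ker B_+$, whence $\ker B_+\subseteq\Imm B$ (using that $\Imm B$ is a subgroup, Lemma~\ref{KerImSub}).

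For (b) the point is to transport the $G_B$-normality of (a) to the original product on $G$. Take $k\in\ker B$ and $b=B_+(g)\in\Imm B_+$. Since $B_+$ is a homomorphism and $B_+(k)=k$, I would compute $bkb^{-1}=B_+(g)B_+(k)B_+(\bar g)=B_+(g\circ k\circ\bar g)$; by (a) the element $k'=g\circ k\circ\bar g$ lies in $\ker B$, and $B_+$ fixes it, so $bkb^{-1}=k'\in\ker B$. Symmetrically, for $m\in\ker B_+$ and $b=B(g)\in\Imm B$, using $m=B(m)^{-1}$ gives $bmb^{-1}=B(g)B(m)^{-1}B(g)^{-1}=\bigl(B(g\circ m\circ\bar g)\bigr)^{-1}=B(m')^{-1}$ with $m'=g\circ m\circ\bar g\in\ker B_+$ by (a); since $B$ inverts elements of $\ker B_+$ we get $B(m')^{-1}=m'\in\ker B_+$. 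This yields $\ker B\unlhd\Imm B_+$ and $\ker B_+\unlhd\Imm B$ in $G$.

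For (c) I would run the isomorphism theorems inside $G_B$. Put $N=\ker B$ and $M=\ker B_+$, both normal in $G_B$ by (a), so $NM$ is a normal subgroup. The first isomorphism theorem gives $G_B/M\cong\Imm B_+$ via $B_+$ and $G_B/N\cong\Imm B$ via $B$. Using the restriction facts above one checks $B_+(NM)=B_+(N)=N=\ker B$ and $B(NM)=B(M)=M=\ker B_+$, so under these isomorphisms $NM/M$ corresponds to $\ker B\subseteq\Imm B_+$ and $NM/N$ corresponds to $\ker B_+\subseteq\Imm B$. The third isomorphism theorem then produces $\Imm B_+/\ker B\cong G_B/NM\cong\Imm B/\ker B_+$, which is~\eqref{FactorIso}. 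Part (d) is the cleanest: for every $g\in G$ one has $g=\bigl(gB(g)\bigr)B(g)^{-1}=B_+(g)\,B(g)^{-1}$, with $B_+(g)\in\Imm B_+$ and $B(g)^{-1}\in\Imm B$, so $G=\Imm B_+\,\Imm B$.

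The only genuinely delicate point I anticipate is (b): normality there is asserted for the original product on $G$, while the homomorphism structure lives on $G_B$, so the argument must translate $G_B$-conjugation into $G$-conjugation. This succeeds precisely because $B_+$ restricts to the identity on $\ker B$ and $B$ restricts to inversion on $\ker B_+$; once these two restriction facts are isolated, (b), (c) and (d) all fall out with only routine bookkeeping.
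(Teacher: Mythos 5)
Your proof is correct, but there is nothing in the paper to compare it against: the paper states this proposition as a result quoted from \cite{Guo2020} and supplies no proof of its own. Your route is the natural one (and mirrors the Lie-algebra argument of Semenov-Tyan-Shanskii): view $B$ and $B_+$ as homomorphisms $G_B\to G$, so (a) and (d) are immediate; isolate the two restriction facts ($B_+$ is the identity on $\ker B$, and $B$ is inversion on $\ker B_+$) to convert $G_B$-conjugation into $G$-conjugation for (b) and to compute $B_+(N\circ M)=\ker B$, $B(N\circ M)=\ker B_+$ for (c); then the third isomorphism theorem in $G_B$, transported through $G_B/\ker B_+\cong\Imm(B_+)$ and $G_B/\ker B\cong\Imm(B)$, yields \eqref{FactorIso}. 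All of these steps check out. One small point you should make explicit: the normality claims in (b) and the quotients in (c) are taken with respect to the product of $G$, so you need $\ker B_+$ to be a subgroup of $(G,\cdot)$ and not merely of $G_B$. This follows in one line from your restriction fact: for $m,m'\in\ker B_+$ one has $m\circ m'=mB(m)m'B(m)^{-1}=m'm$ and the $G_B$-inverse of $m$ equals $B(m)^{-1}m^{-1}B(m)=m^{-1}$, so $\ker B_+$ is closed under the $G$-product and $G$-inverses; the same computation justifies the step $B(N\circ M)=(\ker B_+)^{-1}=\ker B_+$ that you used implicitly. (For $\ker B$ the corresponding fact is Lemma~\ref{KerImSub}a.) With that remark added, the argument is complete.
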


Let us find the exact formulas for words written in terms of the operation $\circ$ in $G_B$.
Given an integer~$k$, denote by $a^{\circ(k)}$ the $k$th power of $a$ under the product $\circ$.

\begin{proposition}\label{for}
Let $(G, B)$ be a Rota---Baxter group.
If $A$ is a subset of $G$ and
$$
w = a_{i_1}^{\circ(k_1)} \circ a_{i_2}^{\circ(k_2)} \circ \ldots
    \circ a_{i_s}^{\circ(k_s)},\quad a_{i_j} \in A,\ k_j \in \mathbb{Z},
$$
is presented by a word under the operation $\circ$, then
\begin{equation}\label{WordInG_B}
w {=} (a_{i_1}B(a_{i_1}))^{k_1}
    (a_{i_2}B(a_{i_2}))^{k_2} \ldots
    (a_{i_s}B(a_{i_s}))^{k_s}
    B(a_{i_s})^{-k_s}
    B(a_{i_{s-1}})^{-k_{s-1}} \ldots
    B(a_{i_1})^{-k_1}.
\end{equation}
In particular, $a^{\circ(k)} = (B_+(a))^k(B(a))^{-k}$ for every integer $k$ and
$a^{\circ(-1)} = B(a)^{-1} a^{-1} B(a)$.
\end{proposition}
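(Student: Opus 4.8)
The plan is to exploit the two homomorphism facts already recorded in the excerpt. By Proposition~\ref{prop:Derived}c the map $B$ is a homomorphism from $G_B=(G,\circ)$ to $(G,\cdot)$, and it was observed just after that proposition that $B_+$ is likewise a homomorphism $G_B\to(G,\cdot)$. Since any group homomorphism carries $\circ$-powers to ordinary powers, applying $B_+$ and $B$ to $w$ linearizes the word at once, and the desired identity~\eqref{WordInG_B} then drops out of the defining relation $B_+(w)=wB(w)$.

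Concretely, first I would apply $B_+$. As $a^{\circ(k)}$ is the $k$th $\circ$-power of $a$ and $B_+$ is a homomorphism, $B_+\bigl(a^{\circ(k)}\bigr)=(B_+(a))^k=(aB(a))^k$, so that
\begin{equation*}
B_+(w)=\prod_{j=1}^{s}(B_+(a_{i_j}))^{k_j}=(a_{i_1}B(a_{i_1}))^{k_1}(a_{i_2}B(a_{i_2}))^{k_2}\cdots(a_{i_s}B(a_{i_s}))^{k_s}.
\end{equation*}
Applying $B$ in the same way, with $B\bigl(a^{\circ(k)}\bigr)=(B(a))^k$, gives $B(w)=B(a_{i_1})^{k_1}B(a_{i_2})^{k_2}\cdots B(a_{i_s})^{k_s}$.

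The last step combines these through $w=B_+(w)\,B(w)^{-1}$, which holds because $B_+(w)=wB(w)$ by the definition of $B_+$. Substituting the two products and inverting $B(w)$ reproduces~\eqref{WordInG_B}; the particular cases follow by specialization, taking $s=1$ for $a^{\circ(k)}=(B_+(a))^k(B(a))^{-k}$ and then $k=-1$ together with $(B_+(a))^{-1}=B(a)^{-1}a^{-1}$ for $a^{\circ(-1)}=B(a)^{-1}a^{-1}B(a)$.

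There is no genuine obstacle once the homomorphism viewpoint is adopted; the only point demanding care is that inverting $B(w)$ reverses the order of its factors and negates their exponents, which is exactly what produces the descending tail $B(a_{i_s})^{-k_s}B(a_{i_{s-1}})^{-k_{s-1}}\cdots B(a_{i_1})^{-k_1}$ on the right of~\eqref{WordInG_B}. Should one wish to avoid invoking the homomorphism property of $B_+$, the single-generator identity $a^{\circ(k)}=(B_+(a))^k(B(a))^{-k}$ can first be proved by induction on $k$ using $a^{\circ(k+1)}=a^{\circ(k)}\circ a$ and~\eqref{R-product}, after which the same bookkeeping assembles the general word.
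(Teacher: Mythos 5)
Your proof is correct, and it takes a genuinely different route from the paper's. The paper proceeds by induction: it first establishes the single-generator formula $a^{\circ(n)}=(B_+(a))^n(B(a))^{-n}$ by induction on $n$ (handling positive and negative powers separately, the negative case via Lemma~\ref{lem:elementary}d and a direct verification that $B(a)^{-1}a^{-1}B(a)$ is the $\circ$-inverse of $a$), and then assembles the general word by induction on~$s$, using only the fact that $B$ is a homomorphism from $G_B$ to $G$. You instead exploit that \emph{both} $B$ and $B_+$ are homomorphisms $G_B\to(G,\cdot)$ --- the latter being the remark the paper makes in passing after Proposition~\ref{prop:Derived} but never uses in its own proof --- apply each to $w$ to linearize it into ordinary products, and finish with the tautology $w=B_+(w)B(w)^{-1}$. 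This eliminates all explicit induction and is shorter and more conceptual; the price is leaning on the unproved (in this paper) remark about $B_+$, though that remark follows in one line from Proposition~\ref{prop:Derived}c, since
$$
B_+(g\circ h)=(g\circ h)B(g\circ h)=gB(g)hB(g)^{-1}\cdot B(g)B(h)=B_+(g)B_+(h),
$$
so there is no circularity. What the paper's approach buys in exchange is self-containedness: it derives everything from the definition of~$\circ$ and the single homomorphism fact, and along the way exhibits the explicit $\circ$-inverse, which your argument obtains only as a corollary of the final formula. Your closing remark --- that one could instead prove the single-generator identity by induction and then do the bookkeeping --- is essentially a description of the paper's actual proof.
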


\begin{proof}
Firstly, it is easy to show by induction on $n\in\mathbb{N}$ that
$a^{\circ(n)} = (B_+(a))^n(B(a))^{-n}$. Indeed, for $n = 0$ we have $e = e$.
The induction step follows from the equalities
$$
a^{\circ(n)}
 = a\circ (a^{\circ(n-1)})
 = aB(a)a^{\circ(n-1)}B(a)^{-1}
 = (aB(a))^n B(a)^{-n}.
$$
One can check directly that $B(a)^{-1}a^{-1}B(a)$
is the inverse element to~$a$ in the group $G_B$.
Now, again by induction on $n\in\mathbb{N}$ and
by Lemma~\ref{lem:elementary}d we compute
\begin{multline*}
a^{\circ(-n)}
 = a^{\circ(-1)}\circ(a^{\circ(-(n-1))}) \\
 = B(a)^{-1}a^{-1}\underline{B(a)B( B(a)^{-1}a^{-1}B(a) )}
 (aB(a))^{-(n-1)} B(a)^{n-1} (B( B(a)^{-1}a^{-1}B(a) ) )^{-1} \\
 = (aB(a))^{-1}(aB(a))^{-(n-1)} B(a)^{n-1}B(a)^1
 = (aB(a))^{-n} B(a)^{n}.
\end{multline*}

Denote $w' = a_{i_2}^{\circ(k_2)} \circ \ldots \circ a_{i_s}^{\circ(k_s)}$.
Applying that $B$ is a homomorphism from $G_B$ to $G$, we get
$$
w = a_{i_1}^{\circ(k_1)}\circ w'
  = (B_+(a_{i_1}))^{k_1}\underline{(B(a_{i_1}))^{-k_1}
  B(a_{i_1}^{\circ(k_1)})}w'B(a_{i_1}^{\circ(k_1)})^{-1}
  = (B_+(a_{i_1}))^{k_1}w'(B(a_{i_1}))^{-k_1}.
$$
Thus, by induction on~$s$ we derive the formula~\eqref{WordInG_B}.
\end{proof}

\section{Constructions via factorizations}

The next example allows us to construct non-elementary RB-operators.

\begin{example}[\cite{Guo2020}]\label{exm:split}
Let $G$ be a group. Given an exact factorization $G = HL$,
a map $B\colon G\to G$ defined as follows,
$$
B(hl) = l^{-1}
$$
is a Rota---Baxter operator on~$G$.
\end{example}

Let us call such Rota---Baxter operator on~$G$ as a~{\it splitting Rota---Baxter operator}.
Note that in this case $G_B\cong H\times L$ and $B_+(hl) = h$.

\begin{proposition}\label{SplittingCond}
Let $G$ be a group and let $B\colon G\to G$ be an RB-operator on~$G$.
Then $B$ is a splitting Rota---Baxter operator on~$G$
if and only if $B(gB(g)) = e$ for all $g\in G$.
\end{proposition}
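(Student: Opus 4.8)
Throughout, I take $B$ to be a Rota---Baxter operator on $G$ (as is required for the notion of a \emph{splitting} operator to apply, and as is needed to invoke the earlier lemmas). The plan is to prove both implications around the candidate factorization $G = HL$ with $H = \ker B$ and $L = \Imm B$, both of which are subgroups by Lemma~\ref{KerImSub}.

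The forward direction I expect to be immediate. If $B$ arises from an exact factorization $G = HL$ with $B(hl) = l^{-1}$, then $gB(g) = B_+(g)$ and, for $g = hl$, one has $B_+(hl) = h \in H$ (as recorded after Example~\ref{exm:split}). Writing $h = h\cdot e$ with $h \in H$ and $e \in L$, the splitting formula gives $B(gB(g)) = B(h) = e^{-1} = e$.

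The converse rests on one key observation: the hypothesis forces $B$ to act by inversion on its image. Given $l \in \Imm B$, I would write $l = B(a)$; then Lemma~\ref{lem:elementary}c yields $B(a)B(B(a)) = B(aB(a))$, whose right-hand side equals $e$ by hypothesis, so $B(l) = B(B(a)) = B(a)^{-1} = l^{-1}$. From this single fact the remaining requirements follow quickly. Exactness is automatic, since any $g \in \ker B \cap \Imm B$ satisfies both $B(g) = e$ and $B(g) = g^{-1}$, whence $g = e$; and the factorization $G = HL$ follows from the general identity $G = \Imm(B_+)\Imm(B)$ of~\eqref{ImageFactorization}, because the hypothesis says precisely that $B_+(g) \in \ker B$, i.e. $\Imm(B_+) \subseteq H$.

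It then remains only to identify $B$ with the splitting operator of $G = HL$. Writing any $g = hl$ with $h \in \ker B$ and $l \in \Imm B$ (the decomposition being unique by exactness), Lemma~\ref{kernelCosets} applies since $B(h) = e$, giving $B(hl) = B(l)$, and the inversion property above gives $B(l) = l^{-1}$, so $B(hl) = l^{-1}$ as required. I do not anticipate a genuine obstacle here; the one substantive step is the inversion-on-the-image observation drawn from Lemma~\ref{lem:elementary}c, after which the argument is bookkeeping. The only point demanding care is tracking the two subgroups $\ker B$ and $\Imm B$ and verifying that the general factorization of Section~3 specializes to the exact factorization we need.
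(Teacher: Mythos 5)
Your proof is correct, and it is organized around the same skeleton as the paper's---both converses build the exact factorization $G = \ker(B)\,\Imm(B)$---but the mechanics differ enough to be worth comparing. The paper proves triviality of $\ker(B)\cap\Imm(B)$ directly: for $a = B(g)$ in the intersection, the hypothesis gives $e = B(gB(g)) = B(ga)$, so $ga\in\ker(B)$, and since $\ker(B)$ is a subgroup, $g\in\ker(B)$, whence $a = B(g) = e$; the factorization comes from the explicit decomposition $g = (gB(g))\cdot B(g)^{-1}$, which at the same time (implicitly) identifies $B$ with the splitting operator of this factorization, since the $\Imm(B)$-component of $g$ is exactly $B(g)^{-1}$. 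You instead first extract the inversion property $B^2(g) = B(g)^{-1}$ from Lemma~\ref{lem:elementary}c---a fact the paper records only afterwards, in the Remark following the proposition---and deduce exactness from it; you obtain $G = \ker(B)\,\Imm(B)$ by combining~\eqref{ImageFactorization} with the inclusion $\Imm(B_+)\subseteq\ker(B)$ rather than by the one-line decomposition; and you make explicit, via Lemma~\ref{kernelCosets}, the final identification $B(hl) = l^{-1}$, the step the paper leaves tacit. Your route is more modular (it leans on the quoted results of \S3 and on Lemma~\ref{kernelCosets}), while the paper's is self-contained and slightly more elementary; in exchange, yours spells out that the exact factorization really does recover $B$ as the operator of Example~\ref{exm:split}, which is strictly what ``splitting'' requires. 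Your opening caveat is also well taken: the proposition's wording allows an arbitrary map $B\colon G\to G$, but the condition $B(gB(g)) = e$ alone does not force the Rota---Baxter identity, so both your proof and the paper's must read the statement as assuming $B$ is a Rota---Baxter operator.
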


\begin{proof}
Suppose that $B$ is a splitting RB-operator on~$G$ with subgroups~$H$~and~$L$.
Take $g = hl$, where $h\in H$ and $l\in L$. Then
$$
B(gB(g)) = B(hlB(hl)) = B(hll^{-1}) = B(h) = e.
$$

Conversely, suppose that $B(gB(g)) = e$ for all $g\in G$.
Let us prove that $G = \ker(B)\Imm(B)$ and this factorization is exact.
Firstly, let $a\in \ker(B)\cap\Imm(B)$.
Then $B(a) = e$ and there exists $g\in G$ such that $a = B(g)$.
By the assumption, we have $e = B(gB(g)) = B(ga)$.
Since $ga,a\in \ker(B)$, by Lemma~\ref{KerImSub}a we conclude that $g\in \ker(B)$.
So, $a = e$.

Secondly, let $g\in G$. The equality $B(gB(g)) = e$ implies that $gB(g) = a \in\ker(B)$.
Thus, $g = ab$, where $b = B(g)^{-1}\in\Imm(B)$ by Lemma~\ref{KerImSub}b.
\end{proof}

\begin{remark}
By Lemma~\ref{lem:elementary}, the condition~$B(gB(g)) = e$
is equivalent to the relation
$B(g) B^2 (g) = e$ or $B^2 (g) = B(g)^{-1}$.
It means that if $B$ is a splitting RB-operator,
then $B$ inverts all elements of $\Imm(B)$.
\end{remark}

Let us extend Example~\ref{exm:split} on some triple factorizations.

\begin{proposition}\label{triangular}
Let $G$ be a group such that $G = HLM$, where $H$, $L$, and $M$
are subgroups of $G$ with pairwise trivial intersection.
Let $C$ be a Rota---Baxter operator on~$L$.
Moreover, $[H,L] = [C(L),M] = e$.
Then the map $B\colon G\to G$ defined by the formula
$$
B(hlm) = C(l)m^{-1}
$$
is a Rota---Baxter operator on~$G$.
\end{proposition}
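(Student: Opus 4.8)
The plan is to reduce the Rota---Baxter identity~\eqref{RB} for $B$ on $G$ to the Rota---Baxter identity for $C$ on the subgroup $L$, using the two commuting hypotheses $[H,L]=e$ and $[C(L),M]=e$ to push every factor into its canonical slot of the $HLM$-decomposition. Before anything else I would settle the well-definedness of $B$, the point most easily overlooked: since $[H,L]=e$ and $H\cap L=\{e\}$, the product $HL$ is a subgroup isomorphic to $H\times L$, and the formula $B(hlm)=C(l)m^{-1}$ is meaningful only once the decomposition $g=hlm$ is \emph{unique}, i.e. once $G=(HL)M$ is an exact factorization in the sense of the preliminaries. I would record this uniqueness first, as otherwise $B$ is not even a function; with it in hand $C(l)\in L$ and $m^{-1}\in M$ are unambiguous.

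Next, fix $g=hlm$ and $g'=h'l'm'$ and compare the two sides of~\eqref{RB}. On the left, $B(g)B(g')=C(l)m^{-1}C(l')m'^{-1}$, and since $C(l')\in C(L)$ commutes with $m^{-1}\in M$ this collapses to $C(l)C(l')m^{-1}m'^{-1}$. On the right I first bring the argument into canonical form. From $B(g)=C(l)m^{-1}$ and $B(g)^{-1}=mC(l)^{-1}$, using $[C(L),M]=e$ to cancel the conjugation by $m$, one gets $gB(g)=hlC(l)$, whence
$$ gB(g)\,g'B(g)^{-1}=h\,lC(l)\,h'\,l'\,m'\,m\,C(l)^{-1}. $$
Moving $C(l)^{-1}\in L$ leftward past $m'm\in M$ via $[C(L),M]=e$, and then $h'\in H$ leftward past $lC(l)\in L$ via $[H,L]=e$, rewrites the right-hand side as $(hh')\bigl(lC(l)l'C(l)^{-1}\bigr)(m'm)$, which is already a canonical $HLM$-decomposition with factors in $H$, $L$, $M$ respectively.

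Applying $B$ then gives $B\bigl(gB(g)g'B(g)^{-1}\bigr)=C\bigl(lC(l)l'C(l)^{-1}\bigr)(m'm)^{-1}=C\bigl(lC(l)l'C(l)^{-1}\bigr)m^{-1}m'^{-1}$, and comparing with the left-hand side reduces identity~\eqref{RB} for $B$ to $C(l)C(l')=C\bigl(lC(l)l'C(l)^{-1}\bigr)$. This is exactly the weight-$1$ Rota---Baxter identity for $C$ on $L$, so it holds by assumption and the proof is complete. I expect the genuine obstacle to lie not in this final comparison, which is forced once the factors are sorted, but in the preliminary bookkeeping: guaranteeing that $B$ is well defined and checking that each rearrangement is licensed by one of the two commuting relations, with every intermediate factor genuinely landing in $H$, $L$ or $M$ so that the value of $B$ can be read off unambiguously.
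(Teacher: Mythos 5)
Your verification of identity~\eqref{RB} is correct and is essentially the paper's own proof: the paper performs exactly the same computation, using $[C(L),M]=e$ to rewrite $B(hlm)B(h'l'm')$ as $C(l)C(l')m^{-1}m'^{-1}$ and to pull $C(l)^{\pm1}$ past the $M$-factors, then $[H,L]=e$ to sort the argument of $B$ into the form $(hh')\bigl(lC(l)l'C(l)^{-1}\bigr)(m'm)$, and finally the Rota---Baxter identity for $C$ on $L$. On that part there is nothing to add.

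The place where you go beyond the paper --- the preliminary well-definedness step --- is where caution is needed. You are right that the formula $B(hlm)=C(l)m^{-1}$ makes sense only if every $g\in G$ has a unique expression $g=hlm$, and right that, given $H\cap L=\{e\}$ and $[H,L]=e$, this amounts to exactness of $G=(HL)M$, i.e., $HL\cap M=\{e\}$. But this cannot be ``settled'' or ``recorded'' from the stated hypotheses: pairwise trivial intersections do not imply unique triple decomposition. For a counterexample, take the Klein four-group $G=\{e,x,y,xy\}$ with $H=\langle x\rangle$, $L=\langle y\rangle$, $M=\langle xy\rangle$, and $C=B_0$ on $L$; every hypothesis of the proposition holds, yet $xy=x\cdot y\cdot e=e\cdot e\cdot xy$, and the formula returns both $C(y)e^{-1}=e$ and $C(e)(xy)^{-1}=xy$, so $B$ is not even a function. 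Thus uniqueness is an additional implicit hypothesis (it is also what makes the operator ``triangular-splitting'' and gives $G_B\cong H\times L_C\times M^{\mathrm{op}}$); the paper's proof silently assumes it as well, since it reads the value of $B$ off both the original and the sorted decompositions. This is a defect in the proposition's formulation rather than in your computation, but in your write-up the uniqueness should be stated as an assumption, not presented as something provable from the listed conditions.
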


\begin{proof}
Let $h,h'\in H$, $l,l'\in L$, and $m,m'\in M$. Then
\begin{multline*}
C(l)C(l')m^{-1}m'^{-1}
 = C(l)m^{-1}C(l')m'^{-1}
 = B(hlm)B(h'l'm') \\
 = B(hlm B(hlm)h'l'm'B(hlm)^{-1})
 = B(hlm C(l)m^{-1}h'l'm'mC(l)^{-1}) \\
 = B(hl C(l)\underline{mm^{-1}}h'l'C(l)^{-1}m'm)
 = B(hh'l C(l)l'C(l)^{-1}m'm) \\
 = C(lC(l)l'C(l)^{-1})m^{-1}m'^{-1},
\end{multline*}
which is fulfilled by~\eqref{RB}.
\end{proof}

Let us call a Rota---Baxter operator on $G = HLM$
defined by Proposition~\ref{triangular} with the help of $C$
as a {\it triangular-splitting} one.
Note that $G_B\cong H\times L_C\times M^{\op}$, where
$M^{\op}$ is the set $M$ with the opposite product $m*m' = m'm$.

\begin{corollary}
Let $G = \langle a,b \rangle$ be two generated 2-step nilpotent group. Put $c = [b,a]$.
Then $G = \langle a \rangle \langle c \rangle \langle b \rangle$ and for any integer~$k$ the map
$$
B(a^{\alpha} c^{\beta} b^{\gamma})
 = c^{k\beta} b^{-\gamma},\quad \alpha,\beta,\gamma \in \mathbb{Z},
$$
is an RB-operator on~$G$.
\end{corollary}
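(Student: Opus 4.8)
The plan is to obtain $B$ as a triangular-splitting operator and to invoke Proposition~\ref{triangular}. I would take $H=\langle a\rangle$, $L=\langle c\rangle$, $M=\langle b\rangle$, and define $C\colon L\to L$ by $C(c^{\beta})=c^{k\beta}$. With these choices the formula $B(hlm)=C(l)m^{-1}$ becomes $B(a^{\alpha}c^{\beta}b^{\gamma})=C(c^{\beta})(b^{\gamma})^{-1}=c^{k\beta}b^{-\gamma}$, which is precisely the map in the statement; so the work reduces to checking that the hypotheses of that proposition are met.

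First I would read off the structural facts forced by $2$-step nilpotency. Here $[G,G]$ is central, and since $[G,G]$ is central the commutator map is bi-multiplicative, whence every commutator is a power of $c=[b,a]$ and hence $[G,G]=\langle c\rangle$ with $c\in Z(G)$. Consequently $G/\langle c\rangle$ is abelian and generated by the images of $a$ and $b$, which lets me write each $g\in G$ as $a^{\alpha}b^{\gamma}c^{\beta}=a^{\alpha}c^{\beta}b^{\gamma}$ after moving the central factor; this gives the asserted factorization $G=\langle a\rangle\langle c\rangle\langle b\rangle=HLM$. Centrality of $c$ then yields the two commutator conditions of Proposition~\ref{triangular} for free: $[H,L]=[\langle a\rangle,\langle c\rangle]=e$, and since $C(L)\subseteq\langle c\rangle$ also $[C(L),M]\subseteq[\langle c\rangle,\langle b\rangle]=e$.

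Next I would verify that $C$ is indeed a Rota---Baxter operator on $L$. Because $L=\langle c\rangle$ is abelian, the identity~\eqref{RB} reduces to $C(x)C(y)=C(xy)$, so being an RB-operator here is the same as being an endomorphism; and $C(c^{\beta})=(c^{k})^{\beta}$ is exactly the $k$-th power endomorphism, which is well defined even when $c$ has finite order. (This also matches Proposition~\ref{prop:RB-Hom}: a homomorphism into an abelian subgroup.) With all hypotheses in place, Proposition~\ref{triangular} produces the operator $B$.

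The point that needs genuine care, and which I expect to be the main obstacle, is the pairwise-trivial-intersection hypothesis of Proposition~\ref{triangular} --- equivalently, the well-definedness of $B$ on the triple factorization. What must really be checked is that the normal form $a^{\alpha}c^{\beta}b^{\gamma}$ determines the pair $(c^{k\beta},b^{\gamma})$ unambiguously, i.e.\ that the factorization $G=\langle a\rangle\langle c\rangle\langle b\rangle$ is exact. This is automatic for the relatively free $2$-step nilpotent group on $a,b$ (the integral Heisenberg group), where the normal form is unique; I would treat this exactness as the substantive content behind the factorization claimed in the statement and confirm it in that model case, since everything else is a direct, essentially formal, application of Proposition~\ref{triangular}.
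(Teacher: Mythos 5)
Your proof is correct and is essentially the paper's own argument: the corollary is stated as an immediate consequence of Proposition~\ref{triangular}, taking $H=\langle a\rangle$, $L=\langle c\rangle$, $M=\langle b\rangle$ and $C$ the $k$-th power endomorphism of the central subgroup $\langle c\rangle$, exactly as you do, with the commutator hypotheses following from centrality of $c=[b,a]$ and $[G,G]=\langle c\rangle$. Your closing caveat is also well placed: the decomposition $a^{\alpha}c^{\beta}b^{\gamma}$ need not be unique in an arbitrary two-generated $2$-step nilpotent group (in $Q_8$ with $a=i$, $b=j$ one has $a^{2}=c$, so the formula is not even well defined for odd $k$), and the exactness you verify in the free nilpotent (Heisenberg) case is precisely the pairwise-trivial-intersection hypothesis of Proposition~\ref{triangular} that the paper's statement leaves implicit.
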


Given a~splitting RB-operator $B$ on a~group~$G$, we have an exact factorization
\begin{equation}\label{KerImFactor}
G = \ker(B)\Imm(B).
\end{equation}
It is easy to see that if $G$~is abelian, then
there exists a non-splitting RB-operator~$B$ on~$G$ such that~\eqref{KerImFactor} is fulfilled.
Indeed, if $G$~is a~direct sum of abelian subgroups: $G = H \times L$, then any map
$hl\mapsto \varphi(l)$, where $\varphi$~is an endomorphism of~$L$,
defines an RB-operator on~$G$ satisfying~\eqref{KerImFactor}.

The next claim generalizes this observation.

\begin{proposition}
Given a semidirect product $G = H\rtimes L$, let $C$ be a~Rota---Baxter operator on $L$.
Then a map $B\colon G\to G$ defined by the formula $B(hl) = C(l)$, where $h\in H$ and $l\in L$,
is a~Rota---Baxter operator.
\end{proposition}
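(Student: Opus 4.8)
The plan is to verify the Rota–Baxter identity~\eqref{RB} directly for the map $B(hl) = C(l)$ by expanding both sides and reducing the right-hand side to an application of the Rota–Baxter identity for $C$ on $L$. The key structural fact I would exploit is that in a semidirect product $G = H \rtimes L$, the subgroup $H$ is normal, so conjugation by any element of $L$ sends $H$ into $H$; consequently the $L$-parts of any product can be collected on the right while the $H$-parts accumulate (in a twisted fashion) on the left. Since $B(hl) = C(l)$ depends only on the $L$-component and $C(l) \in L$, all the $H$-contributions should wash out when we apply $B$ again.

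First I would fix $h, h' \in H$ and $l, l' \in L$ and compute the argument of $B$ on the right-hand side of~\eqref{RB}, namely $g B(g) h' l' B(g)^{-1}$ with $g = hl$ and $B(g) = C(l)$. This gives $h l\, C(l)\, h' l'\, C(l)^{-1}$. The goal is to write this element in the normal form $(\text{element of }H)(\text{element of }L)$ so that $B$ can be applied. Using normality of $H$, I would push the leading $H$-factors to the left: the product $h$ together with the conjugates of $h'$ by the intervening $L$-elements lands in $H$, while the surviving $L$-part is exactly $l\, C(l)\, l'\, C(l)^{-1}$. Since $B$ ignores the $H$-component, we obtain
$$
B\bigl(g C(l) h' l' C(l)^{-1}\bigr) = C\bigl(l\, C(l)\, l'\, C(l)^{-1}\bigr).
$$

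Next I would compute the left-hand side of~\eqref{RB}: since $B(g)B(h'l') = C(l)C(l')$, matching the two sides reduces the claim to the identity $C(l)C(l') = C(l\, C(l)\, l'\, C(l)^{-1})$, which is precisely~\eqref{RB} for the operator $C$ on the group $L$. This closes the argument. I would present the whole verification as a single displayed chain of equalities in the style of Proposition~\ref{triangular}, annotating the one step that uses normality of $H$ to discard the $H$-component under $B$.

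The main obstacle — and the only step requiring care — is the bookkeeping when moving the $H$-factors to the left past the interleaved $L$-elements $C(l)$, $l'$, and $C(l)^{-1}$, and then confirming that this collected $H$-part genuinely lies in $H$ and is therefore annihilated by $B$ (i.e.\ that $B$ of an element of $H\cdot L$ truly depends only on the $L$-coordinate, independently of the $H$-coordinate). This is exactly where the semidirect (as opposed to merely exact-factorization) hypothesis is essential: normality of $H$ guarantees that $l''\, h'\, l''^{-1} \in H$ for the relevant $l'' \in L$, so the normal form of the argument has its $L$-component cleanly equal to $l\,C(l)\,l'\,C(l)^{-1}$. Everything else is routine group arithmetic.
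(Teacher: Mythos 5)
Your proposal is correct and follows essentially the same route as the paper's own proof: both expand the argument $hl\,C(l)\,h'l'\,C(l)^{-1}$, use normality of $H$ to conjugate the $H$-factors to the left so that the $L$-component is exactly $l\,C(l)\,l'\,C(l)^{-1}$, and then invoke the Rota--Baxter identity for $C$ on $L$. No gaps; the bookkeeping step you flag is precisely the single conjugation the paper writes as $h\,h'^{C(l)^{-1}l^{-1}}\,l\,C(l)\,l'\,C(l)^{-1}$.
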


\begin{proof}
We state the claim by the following computations for $h,h'\in H$ and $l,l'\in L$,
$$
B(hl)B(h'l') = C(l)C(l'),
$$

\vspace{-0.9cm}
\begin{multline*}
B(hlB(hl)h'l'B(hl)^{-1})
 = B(hl C(l)h'l'C(l)^{-1})
 = B(hh'^{C(l)^{-1}l^{-1}}l C(l)l'C(l)^{-1}) \\
 = C(lC(l)l'C(l)^{-1})
 = C(l)C(l'). \qedhere
\end{multline*}
\end{proof}

Note that $G_B\cong H\rtimes L_C$.

\section{Constructions via homomorphisms, projections, and exact formulas}

When $G$ is an abelian group, then $B$ is a Rota---Baxter operator on~$G$
if and only if $B$~is an endomorphism of $G$.

\begin{proposition}\label{prop:RB-Hom}
a) Let $(G, B)$ be a Rota---Baxter group and $B$ be an automorphism of $G$. Then $G$ is abelian.

b) If $G$ a group and $H$ is its abelian subgroup,
then any homomorphism (or antihomomorphism) $B\colon G \to H$ is a Rota---Baxter operator.
\end{proposition}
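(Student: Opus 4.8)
The plan is to verify both parts directly from the Rota---Baxter identity~\eqref{RB}, expanding the right-hand side $B(gB(g)hB(g)^{-1})$ by means of the (anti)homomorphism property of $B$ and then exploiting the commutativity of the relevant subgroup.

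For part b), suppose first that $B\colon G\to H$ is a homomorphism. Since $B$ is multiplicative, I would compute
$$
B(gB(g)hB(g)^{-1}) = B(g)\,B(B(g))\,B(h)\,B(B(g))^{-1}.
$$
All three factors $B(g)$, $B(B(g))$, $B(h)$ lie in the abelian subgroup $H$, hence commute; moving $B(B(g))^{-1}$ past $B(h)$ makes it cancel with $B(B(g))$, leaving exactly $B(g)B(h)$, which is the left-hand side of~\eqref{RB}. For an antihomomorphism the same expansion reverses the order of the four images, producing $B(B(g))^{-1}B(h)B(B(g))B(g)$; commutativity in $H$ again collapses this to $B(h)B(g) = B(g)B(h)$, so~\eqref{RB} holds. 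The only points requiring care here are the order reversal in the antihomomorphism case and the relation $B(x^{-1}) = B(x)^{-1}$, both of which are routine (the latter following since $B(e)=e$ for an (anti)homomorphism between groups).

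For part a), I would use that an automorphism $B$ is in particular a homomorphism, so the same expansion applies and~\eqref{RB} reads
$$
B(g)B(h) = B(g)\,B(B(g))\,B(h)\,B(B(g))^{-1}.
$$
Cancelling $B(g)$ on the left yields $B(h) = B(B(g))\,B(h)\,B(B(g))^{-1}$, i.e. $B(B(g))$ commutes with $B(h)$ for all $g,h\in G$. Since $B$ is surjective, $B(h)$ ranges over all of $G$; and since $B\circ B$ is again an automorphism, $B(B(g))$ likewise ranges over all of $G$. Therefore every element of $G$ commutes with every other, so $G$ is abelian.

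I do not expect a substantial obstacle in either part: once~\eqref{RB} is written out through the (anti)homomorphism property, the conclusions follow from commutativity of $H$ in part b) and from surjectivity applied to the derived relation in part a). The main thing to keep straight is the bookkeeping of the four factors, in particular their reversal in the antihomomorphism case.
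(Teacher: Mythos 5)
Your proof is correct and follows essentially the same route as the paper: both expand the right-hand side of~\eqref{RB} via the (anti)homomorphism property to reduce the Rota---Baxter identity to the commutation relation $[B^2(g),B(h)]=e$, which holds automatically when the image lies in the abelian subgroup $H$ (part b) and, by surjectivity of $B$ and of $B^2$, forces $G$ to be abelian when $B$ is an automorphism (part a). There are no gaps; your handling of the order reversal and of $B(x^{-1})=B(x)^{-1}$ in the antihomomorphism case is exactly the ``analogous'' argument the paper leaves implicit.
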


\begin{proof}
Suppose that a Rota---Baxter operator $B\colon G \to G$ is an endomorphism, then
$$
\underline{B(g)}B(h)
 = B(gB(g)hB(g)^{-1})
 = \underline{B(g)}B^2(g) B(h) (B^2(g))^{-1},\quad g,h \in G.
$$
It is equivalent to the relation
\begin{equation}\label{RB-Hom}
[B^2(g), B(h)] = e.
\end{equation}
Hence, b) is true.
The proof for the case when $B$ is an antihomomorphism from $G$ to $H$ is analogous.

If further $B$ is an automorphism, then $G$~is an abelian group by~\eqref{RB-Hom}.
\end{proof}

The following several constructions of Rota---Baxter operators work only for groups very close to abelian ones.

Given an integer~$k$, a group $G$ is called $k$-abelian~\cite{Levi} if $(gh)^k = g^k h^k$ for all $g,h\in G$.

\begin{proposition}\label{prop:n-abelian}
Let $G$ be a group, and fix a natural~$n$. A map $B\colon G\to G$ defined as follows
$B(g) = g^n$ is a Rota---Baxter operator on $G$ if and only if $G$ is $(n+1)$-abelian.
\end{proposition}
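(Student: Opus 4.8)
The plan is to verify the Rota--Baxter identity~\eqref{RB} directly for the candidate map $B(g) = g^n$ and to read off exactly which group-theoretic hypothesis on $G$ makes it hold. Substituting $B(g) = g^n$ into~\eqref{RB}, the left-hand side is $g^n h^n$, while the right-hand side is
\[
B\bigl(g\,g^n\,h\,g^{-n}\bigr) = \bigl(g^{n+1}\,h\,g^{-n}\bigr)^n.
\]
So the proposition reduces to the assertion that $g^n h^n = (g^{n+1} h g^{-n})^n$ holds for all $g,h \in G$ if and only if $G$ is $(n+1)$-abelian, i.e. $(xy)^{n+1} = x^{n+1}y^{n+1}$ for all $x,y$.

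First I would handle the reverse implication. Assuming $G$ is $(n+1)$-abelian, I want to simplify $(g^{n+1} h g^{-n})^n$. The natural move is to set $x = g$ and apply the $(n+1)$-abelian law to rewrite powers of products; a cleaner route is to conjugate: write $a = g^{n+1} h g^{-n}$ and observe $a^n = g^{n+1}(h g^{-n} g^{n+1})^{?}\dots$, so I would instead expand $a^n$ as a telescoping product $g^{n+1} h g^{-n} \cdot g^{n+1} h g^{-n} \cdots$ and collapse the internal $g^{-n}g^{n+1} = g$ factors, obtaining $g^{n+1}(hg)^{n-1} h g^{-n} = g^{n+1}(hg)^n g^{-(n+1)}$. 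Thus the RB-identity becomes equivalent to $g^n h^n = g^{n+1}(hg)^n g^{-(n+1)}$, i.e. $(gh)^n \cdot g = g \cdot (hg)^n \cdot g^{-n} \cdot g^n$ after rearranging---more transparently, multiplying out, the claim is $g^{-1}(g^n h^n) g^{n+1} = (hg)^n g$, which one checks is a consequence of the $(n+1)$-abelian identity applied to suitable elements. The key algebraic fact I expect to use repeatedly is that in an $(n+1)$-abelian group one has the commuting relation $[x^{n+1}, y] = [x, y^{n+1}]$-type consequences and that $(n+1)$-abelian is equivalent to $[x^n, y] = [x, y^{-n}]$ or similar identities of Levi; I would cite~\cite{Levi} for the standard characterizations and use whichever form makes the telescoped expression match $g^n h^n$.

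For the forward implication, I would specialize the RB-identity. Having reduced to $g^n h^n = g^{n+1}(hg)^n g^{-(n+1)}$ for all $g,h$, I rearrange to $g^{-(n+1)} g^n h^n g^{n+1} = (hg)^n$, that is $g^{-1} h^n g^{n+1} = (hg)^n$. Replacing $h$ by $hg^{-1}$ (a legitimate substitution since $h$ ranges over all of $G$) should convert this into the raw $(n+1)$-abelian law $(something)^{n+1} = \dots$; concretely $g^{-1}(hg^{-1})^n g^{n+1} = (h)^n g$ and after cleaning the powers this is designed to yield $(xy)^{n+1} = x^{n+1} y^{n+1}$. So the two directions share the same central computation, and the whole proof is really the single equivalence between the telescoped identity and Levi's defining law.

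The main obstacle will be the bookkeeping in the telescoping step: collapsing $(g^{n+1} h g^{-n})^n$ correctly and matching it against $g^n h^n$ requires care with the boundary factors $g^{n+1}$ and $g^{-n}$, and it is easy to be off by one in the exponents. I would double-check the telescope on a small case ($n=1$, where $G$ must be $2$-abelian, i.e. abelian, and the identity becomes $gh = g^2 h g^{-1}$, clearly equivalent to commutativity) to fix the exponent conventions before writing the general computation. The rest is routine substitution, so once the telescoped form is pinned down the equivalence with $(n+1)$-abelianness follows cleanly in both directions.
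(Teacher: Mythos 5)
Your overall strategy --- substitute $B(g)=g^n$ into~\eqref{RB}, telescope $(g^{n+1}hg^{-n})^n$, and match the result against the $(n+1)$-abelian law --- is exactly the paper's approach, and your telescoping is correct: the right-hand side equals $g^{n+1}(hg)^n g^{-(n+1)}$ (the paper writes the equivalent form $g^n(gh)^n g^{-n}$). The problem is that the decisive step, the equivalence of $g^nh^n = g^{n+1}(hg)^n g^{-(n+1)}$ with $(n+1)$-abelianness, is exactly where your concrete equations go wrong, in both directions. In the reverse implication, your ``more transparent'' rearrangement $g^{-1}(g^nh^n)g^{n+1}=(hg)^n g$ is not equivalent to the RB identity: for $n=1$ it reads $hg^2=hg^2$, a tautology, whereas the RB identity for $n=1$ forces commutativity; so deducing that equation from the $(n+1)$-abelian law would prove nothing. (Your own proposed $n=1$ sanity check, had you applied it to this rearranged equation rather than to the telescoped identity, would have exposed this.) In the forward implication you rearrange correctly to $(hg)^n=g^{-1}h^n g^{n+1}$, but the substitution $h\mapsto hg^{-1}$ is then miscomputed --- the right-hand side becomes $h^n$, not $h^n g$ --- and the final passage to $(xy)^{n+1}=x^{n+1}y^{n+1}$ is left as ``designed to yield,'' which is precisely the content of the proposition.

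The missing observation is a one-liner and closes both directions simultaneously: multiplying $(hg)^n=g^{-1}h^n g^{n+1}$ on the left by $hg$ gives $(hg)^{n+1}=h^{n+1}g^{n+1}$, and since every step (telescoping, conjugation, multiplication by a fixed element) is reversible, the RB identity holds for all $g,h$ if and only if $G$ is $(n+1)$-abelian. This is in substance what the paper does: it reduces the telescoped identity to $(gh)^n=h^n g^n$ and then uses the shift identity $(hg)^{n+1}=h(gh)^n g$ to pass to the $(n+1)$-abelian law. No appeal to Levi's characterizations of $n$-abelian groups is needed, and invoking them does not repair the two incorrect rearrangements. As written, your proposal has the right plan but fails at exactly the step that carries the proof's content.
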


\begin{proof}
Given $g,h\in G$, the identity~\eqref{RB} for the map $B(x) = x^n$ has the form
$$
g^n h^n
 = (g^{n+1}hg^{-n})^n
 = g^{n+1}hg^{-n}g^{n+1}hg^{-n}g^{n+1}\ldots hg^{-n}
 = g^n(gh)^n g^{-n}.
$$
So, it is equivalent to the equality
$(gh)^n = h^n g^n$ or to the following one:
$$
(hg)^{n+1} = h(gh)^n g = h^{n+1}g^{n+1}.
$$
It means that $B$ is a Rota---Baxter operator on $G$ if and only if $G$ is $(n+1)$-abelian.
\end{proof}

Since every $k$-abelian group is also $(1-k)$-abelian group,
we may represent the induced product in $G_B$ as follows
$$
g\circ h
 = gg^nhg^{-n}
 = g^{n+1}h^{n+1}h^{-n}g^{-n}
 = (gh)^{n+1}(hg)^{-n}.
$$

\begin{remark}
The statement of Proposition~\ref{prop:n-abelian} in ``if'' direction
follows from Proposition~\ref{prop:RB-Hom},
since the $n$th power of a~$(n+1)$-abelian group~$G$ lies in $Z(G)$~\cite{Kaluzhnin}.
\end{remark}

\begin{proposition}\label{prop:center-conj}
Let $G$ be a group.
Then a map $B(x) = g^{-1} x^{-1} g$ is a Rota---Baxter operator on $G$
if and only if $[g,G]\subset Z(G)$.
\end{proposition}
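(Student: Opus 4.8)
The plan is to substitute $B(x) = g^{-1}x^{-1}g$ directly into the defining identity~\eqref{RB} and reduce it to a transparent commutator condition. First I would compute both sides of $B(g)B(h) = B(gB(g)hB(g)^{-1})$, which to avoid clashing with the fixed element I rewrite with variables $a,b$ as $B(a)B(b) = B(aB(a)bB(a)^{-1})$. The left-hand side telescopes to $g^{-1}a^{-1}b^{-1}g$. For the right-hand side, using $B(a)^{-1} = g^{-1}ag$, the argument of $B$ becomes $a\,g^{-1}a^{-1}g\,b\,g^{-1}ag$; applying $B$ once more and then conjugating the whole identity by $g$, I expect \eqref{RB} to collapse to
$$
a^{-1}b^{-1} = (g^{-1}a^{-1}g)\,b^{-1}\,(g^{-1}ag)\,a^{-1}, \qquad a,b\in G.
$$

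Next I would rewrite the two conjugates in commutator form, $g^{-1}a^{-1}g = [g,a]a^{-1}$ and $g^{-1}ag = [g,a^{-1}]a$, so that after cancelling the trailing $a\,a^{-1}$ the condition reads
$$
a^{-1}b^{-1} = [g,a]\,a^{-1}b^{-1}\,[g,a^{-1}], \qquad a,b\in G.
$$
This is the key reformulation; everything else is reading off its content.

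For the forward direction I would fix $a$ and set $w = a^{-1}b^{-1}$, which runs over all of $G$ as $b$ does. The displayed identity then rearranges to $w\,[g,a^{-1}]\,w^{-1} = [g,a]^{-1}$ for every $w\in G$; since the right-hand side is independent of $w$, the conjugacy class of $[g,a^{-1}]$ is a single point, forcing $[g,a^{-1}]\in Z(G)$ (and, taking $w=e$, also $[g,a^{-1}] = [g,a]^{-1}$). Letting $a$ range over $G$ yields $[g,G]\subset Z(G)$. For the converse I would use that $[g,G]\subset Z(G)$ makes the map $x\mapsto[g,x]$ a homomorphism into $Z(G)$ (via $[g,xy]=[g,y][g,x]^y=[g,y][g,x]$), whence $[g,a^{-1}] = [g,a]^{-1}$ is central; then the two commutator factors in the displayed identity can be collected and cancel, recovering $a^{-1}b^{-1}=a^{-1}b^{-1}$, which gives~\eqref{RB}.

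The main obstacle I anticipate is the bookkeeping in the first reduction --- keeping the order of the six factors in $B(aB(a)bB(a)^{-1})$ straight after two applications of $B$ --- together with the one genuinely nonformal step, namely recognizing that ``all conjugates of $[g,a^{-1}]$ coincide'' is precisely centrality. Once the identity is brought to commutator form, both implications are routine.
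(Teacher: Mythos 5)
Your proposal is correct and follows essentially the same route as the paper: both substitute $B(x)=g^{-1}x^{-1}g$ into~\eqref{RB}, conjugate by $g$, reduce to the identity $a^{-1}b^{-1} = [g,a]\,a^{-1}b^{-1}\,[g,a^{-1}]$, and deduce centrality in the forward direction by observing that all conjugates of $[g,a^{-1}]$ coincide. The only cosmetic difference is in the converse, where you obtain $[g,a^{-1}]=[g,a]^{-1}$ from the homomorphism property of $x\mapsto[g,x]$ while the paper verifies the same relation by a direct commutator computation; the two verifications are equivalent.
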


\begin{proof}
Suppose that $B(x) = g^{-1} x^{-1} g$ is a Rota---Baxter operator on $G$. It means that
$$
g^{-1} x^{-1}y^{-1} g
 = g^{-1}( x g^{-1}x^{-1}g y g^{-1} x g )^{-1}g
 = g^{-1}g^{-1}x^{-1}gy^{-1}g^{-1}xg x^{-1}g
$$
for all $x,y\in G$. This equality is equivalent to the following one,
$$
x^{-1}y^{-1}
 = [g,x]x^{-1}y^{-1}[g,x^{-1}].
$$
Denote $s = x^{-1}y^{-1}$, then we have
$[x,g]^s = [g,x^{-1}]$ for all $x,s\in G$.
Fixing $x\in G$, we conclude that $[g,x]\in Z(G)$ for every $x\in G$.

Conversely, suppose that $g\in G$ such an element that $[g,x]\in Z(G)$ for every $x\in G$.
By the previous part, it is enough to state that $[x,g] = [g,x^{-1}]$ for all $x,s\in G$.
Since
$$
[g,x^{-1}]
 = g^{-1}xgx^{-1}
 = x^{-1}g^{-1}xg[g^{-1}xg,x^{-1}]
 = [x,g]\cdot [x^g,x^{-1}],
$$
we have to prove that $[x^g,x^{-1}] = e$. Denote $c = [g,x^{-1}]$.
We get it by the formulas
$$
[x^g,x^{-1}]
 = [[g,x^{-1}]x,x^{-1}]
 = [cx,x^{-1}]
 = x^{-1}c^{-1}xcxx^{-1}
 = e. \qedhere
$$
\end{proof}

Given a group~$G$ with an element $g$ such that $[g,G]\subset Z(G)$,
we may compute the product of $G_B$, where $B(x) = g^{-1} x^{-1} g$:
$$
x\circ y
 = xB(x)yB(x)^{-1}
 = xg^{-1} x^{-1} gyg^{-1} xg
 = yxg^{-1} x^{-1} gg^{-1} xg
 = yx,
$$
so this product coincides with the product induced by the RB-operator $B_{-1}$.

\begin{corollary}\label{coro:2step}
Let $G$ be a 2-step nilpotent group, $g \in G$,
then the map $B_g(x) = g^{-1} x^{-1} g$ is a Rota---Baxter operator.
\end{corollary}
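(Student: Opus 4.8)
The plan is to derive Corollary~\ref{coro:2step} directly from Proposition~\ref{prop:center-conj} by checking that the hypothesis of that proposition is automatically satisfied for a 2-step nilpotent group. Proposition~\ref{prop:center-conj} tells us that $B_g(x) = g^{-1}x^{-1}g$ is a Rota--Baxter operator precisely when $[g,G] \subset Z(G)$, so the entire task reduces to verifying this single containment for an arbitrary element $g$ of a 2-step nilpotent group.

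The key step is to recall the definition of 2-step nilpotency in terms of the lower central series. A group $G$ is 2-step nilpotent when $G_3 = e$, equivalently when $[[G,G],G] = e$, which means exactly that the commutator subgroup $[G,G]$ is central: $[G,G] \subset Z(G)$. First I would make this identification explicit. Then, for any fixed $g \in G$ and any $x \in G$, the commutator $[g,x]$ lies in $[G,G]$ by definition of the commutator subgroup, and hence lies in $Z(G)$. This gives $[g,G] \subset [G,G] \subset Z(G)$, which is precisely the hypothesis required by Proposition~\ref{prop:center-conj}.

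Having established $[g,G] \subset Z(G)$, I would simply invoke Proposition~\ref{prop:center-conj} to conclude that $B_g(x) = g^{-1}x^{-1}g$ is a Rota--Baxter operator, completing the proof. The argument is short enough that it can be written in a few lines.

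I do not anticipate any genuine obstacle here, since the statement is a direct specialization: the real content lives in Proposition~\ref{prop:center-conj}, and the corollary only observes that 2-step nilpotency forces every commutator into the center. The one point requiring mild care is making sure the reader sees that $[g,G] \subset Z(G)$ holds for \emph{every} $g$, not just for $g$ in some distinguished subgroup; but this is immediate once one notes that all commutators $[g,x]$ lie in $[G,G]$, which is central by the 2-step nilpotency assumption.
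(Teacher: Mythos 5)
Your proposal is correct and follows exactly the route the paper intends: the corollary is stated immediately after Proposition~\ref{prop:center-conj} with no separate proof, precisely because 2-step nilpotency means $[G,G]\subset Z(G)$, so $[g,G]\subset [G,G]\subset Z(G)$ for every $g\in G$ and the proposition applies directly. Nothing is missing from your argument.
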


It is evident that this map $B_g$ is a bijection.
Denote by $\mathcal{RB}(G)$ the set of all Rota---Baxter operators on~$G$.
Hence, we have a map
$\varphi \colon G/[G,G]\to \mathcal{RB}(G)$ defined as follows, $\varphi(g) = B_g$.

Let us try to construct a Rota---Baxter operator by the formula $B(g) = agb$
with the help of some fixed $a$ and $b$. In comparison to Proposition~\ref{prop:center-conj}
we allow $a$ and $b$ be not necessary inverse to each other, however we do not inverse $g$.

\begin{corollary}
Let $G$ be a group, and fix $a,b\in G$. A map $B\colon G\to G$ defined as follows
$B(g) = agb$, is a Rota---Baxter operator on $G$
if and only if $G$ is abelian and $b = a^{-1}$, i.\,e., $B = \id$.
\end{corollary}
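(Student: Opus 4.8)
The plan is to avoid expanding the Rota--Baxter identity~\eqref{RB} for the candidate map $B(g)=agb$ in full, and instead to extract both conclusions from the simplest necessary conditions that any RB-operator satisfies. First I would invoke Lemma~\ref{lem:elementary}a, which forces $B(e)=e$. For the map $B(g)=agb$ this reads $ab=e$, so immediately $b=a^{-1}$. This single evaluation already pins down the relation between $a$ and $b$ with no computation inside~\eqref{RB} at all.

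With $b=a^{-1}$ the map becomes $B(g)=aga^{-1}$, that is, conjugation by the fixed element $a$. Since conjugation by a fixed element is an automorphism of $G$ (its inverse being conjugation by $a^{-1}$), the pair $(G,B)$ is a Rota--Baxter group whose operator is an automorphism. I would then apply Proposition~\ref{prop:RB-Hom}a, which asserts that whenever an RB-operator is an automorphism the underlying group must be abelian; this yields that $G$ is abelian.

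Once $G$ is abelian, conjugation is trivial, so $B(g)=aga^{-1}=g$ and hence $B=\id$. For the converse, if $G$ is abelian and $b=a^{-1}$, then $B=\id$, which is an endomorphism of $G$ and therefore an RB-operator by the remark preceding Proposition~\ref{prop:RB-Hom}. The only point requiring care --- and essentially the whole content of the argument --- is the recognition that the brute-force route through~\eqref{RB} is unnecessary: the statement follows purely from the normalization $B(e)=e$ together with Proposition~\ref{prop:RB-Hom}a. I therefore expect no genuine obstacle; the main risk is merely being lured into the lengthy direct verification of $agb\cdot ahb=B(g\,agb\,h\,b^{-1}g^{-1}a^{-1})$, which is entirely avoidable.
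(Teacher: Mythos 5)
Your proof is correct and follows essentially the same route as the paper: the normalization $B(e)=e$ (Lemma~\ref{lem:elementary}a) forces $b=a^{-1}$, then Proposition~\ref{prop:RB-Hom}a applied to the automorphism $B(g)=aga^{-1}$ gives that $G$ is abelian and hence $B=\id$, with the converse following from the fact that endomorphisms of abelian groups are RB-operators. No gaps; you have simply made explicit the observation (also implicit in the paper) that the direct expansion of~\eqref{RB} is unnecessary.
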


\begin{proof}
If $G$ is abelian, then every endomorphism of $G$ is
a Rota---Baxter operator on $G$, including the identity map.

Conversely, suppose that a map $B$ on $G$ defined by the formula
$B(g) = agb$ is a~Rota---Baxter operator on $G$.
Since $B(e) = e$, we have $b = a^{-1}$. Thus, $B$ is an automorphism of~$G$
and by~Proposition~\ref{prop:RB-Hom}a we have that $G$ is abelian, and so $B = \id$.
\end{proof}

\begin{definition}
A Rota---Baxter group $(G, B)$ has the {\it deep} $m$
($m$ is a natural number or $\omega$) if it is a minimal such that
$$
G > B(G) > B^2(G) > \cdots > B^m(G) = B^{m+1}(G).
$$
\end{definition}

Note that the RB-group $(G,B_{-1})$ has deep 0
and the RB-group $(G,B)$, where $|G|>1$, $B = B_0$ or $B$ is splitting, has deep 1.

The following example shows that there are RB-groups of infinite deep.

\begin{example}
Let $(\mathbb{Z}, +)$ be the infinite cycle group and
$B\colon \mathbb{Z} \to \mathbb{Z}$, $B(x) = 2 x$ for any $x \in \mathbb{Z}$.
Then $(\mathbb{Z}, B)$ is a Rota---Baxter group and
it has the deep $\omega$ since
$$
\mathbb{Z}
 > B(\mathbb{Z}) = 2\mathbb{Z}
 > B^2(\mathbb{Z}) = 4 \mathbb{Z}
 > \dots
 > B^{\omega}(\mathbb{Z}) = \bigcap_{k=1}^{\infty}B^{k}(\mathbb{Z}) = 0.
$$
\end{example}

If $F_n = \langle x_1, x_2, \ldots, x_n  \rangle$ is a free non-abelian group,
then we can define its projection $\pi_1 \colon F_n \to \langle x_1 \rangle$ by
$$
\pi_1(x_1) = x_1,\quad \pi_1(x_i) = e,\quad i\neq1.
$$
Then $F_n = \ker(\pi_1) \langle x_1 \rangle$ and for any integer $k$
the map $B(g) = \pi_1(g)^k$ is a Rota---Baxter operator on $F_n$.
The following proposition generalizes this observation.

\begin{proposition}\label{fr}
Let $G = K * L$ be a free product with abelian $L$ and $\pi_1\colon G \to L$
be a projection on the second component. Then for any endomorphism $\chi$ of $L$ the map
$B(g) = \chi(\pi_1(g))$ is a~Rota---Baxter operator on $G$.
\end{proposition}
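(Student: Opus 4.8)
The plan is to reduce the statement to the homomorphism criterion already established in Proposition~\ref{prop:RB-Hom}b, which asserts that every homomorphism from a group into one of its abelian subgroups is automatically a Rota---Baxter operator. Granting that result, it suffices to verify two things: that $B = \chi\circ\pi_1$ is a group homomorphism, and that its image lies in an abelian subgroup of $G$.

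First I would recall why $\pi_1\colon G\to L$ is itself a homomorphism. By the universal property of the free product $G = K * L$, the pair consisting of the trivial map $K\to L$ (sending every element to $e$) together with the identity map $L\to L$ induces a unique homomorphism $G\to L$; this is precisely the projection $\pi_1$, characterized by $\pi_1(k) = e$ for $k\in K$ and $\pi_1(l) = l$ for $l\in L$. Consequently, since $\chi$ is by hypothesis an endomorphism of $L$, the composite $B = \chi\circ\pi_1\colon G\to L$ is again a group homomorphism, and its image is contained in $L$.

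It then remains only to note that $L$, being a free factor of $G = K * L$, embeds as a subgroup of $G$, and by hypothesis $L$ is abelian. Hence $B$ is a homomorphism from $G$ into the abelian subgroup $L\leq G$, and Proposition~\ref{prop:RB-Hom}b applies verbatim to conclude that $B$ is a Rota---Baxter operator on $G$. I do not anticipate any genuine obstacle here: the entire argument hinges on the single observation that $\pi_1$ is an honest homomorphism, so that the composite $B$ inherits this property, after which the abelianness of the image subgroup $L$ is exactly the hypothesis demanded by the homomorphism criterion. As a consistency check, this recovers the preceding free-group example upon taking $K = \langle x_2,\ldots,x_n\rangle$, $L = \langle x_1\rangle\cong\mathbb{Z}$, and $\chi$ the $k$th-power endomorphism of $L$.
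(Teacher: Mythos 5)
Your proof is correct and matches the paper's intent: the paper states Proposition~\ref{fr} without any proof, precisely because it is the immediate consequence of Proposition~\ref{prop:RB-Hom}b that you spell out (the projection $\pi_1$ is a homomorphism by the universal property of the free product, so $B=\chi\circ\pi_1$ is a homomorphism from $G$ into its abelian subgroup $L$). Nothing is missing.
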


\begin{question}
Let $G$ be a group and $H$ be its a proper subgroup. Under which conditions
there is a Rota---Baxter operator $B$ such that $B(G) = H$?
In particular, if $F$ is a free non-abelian group, is there an RB-operator
on~$F$ such that its image is the commutator subgroup $F'$ of $F$?
\end{question}

\section{Constructions via direct products}

Using RB-operators on some groups, we can define RB-operators
on direct products of these groups, it follows from

\begin{proposition} \label{direct}
Let $G$ be a group such that $G = H\times L$.

a) Let $B_H$ be a Rota---Baxter operator on $H$
and let $B_L$ be a Rota---Baxter operator on $L$.
Then a map $B\colon G\to G$ defined by the formula
$B(hl) = B_H(h)B_L(l)$, where $h\in H$ and $l\in L$,
is a Rota---Baxter operator.

b) If $L$ is abelian and $|L|>2$, then
there exists a non-splitting Rota---Baxter operator on $G$.
\end{proposition}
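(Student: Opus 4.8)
Direct product of RB-operators.

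The plan for part~(a) is a direct verification of the defining identity~\eqref{RB}. I would write arbitrary elements as $g = hl$ and $g' = h'l'$ with $h,h'\in H$ and $l,l'\in L$, and exploit that the $H$- and $L$-factors commute in the direct product. The left-hand side $B(g)B(g') = B_H(h)B_L(l)B_H(h')B_L(l')$ rearranges to $\bigl(B_H(h)B_H(h')\bigr)\bigl(B_L(l)B_L(l')\bigr)$, while the conjugating factor on the right splits componentwise as
\[
gB(g)g'B(g)^{-1} = \bigl(hB_H(h)h'B_H(h)^{-1}\bigr)\bigl(lB_L(l)l'B_L(l)^{-1}\bigr),
\]
an element whose $H$-component lies in $H$ and whose $L$-component lies in $L$. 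Applying the definition of $B$ and then the RB-identities for $B_H$ on $H$ and for $B_L$ on $L$ to the two components separately collapses them to $B_H(h)B_H(h')$ and $B_L(l)B_L(l')$, which matches the left-hand side. The only care needed is bookkeeping of the commuting factors; there is no genuine obstacle here.

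For part~(b) I would specialize part~(a) by taking $B_H = B_0$ on $H$ and $B_L = \chi$ for an endomorphism $\chi$ of the abelian group $L$ (on an abelian group the RB-operators are precisely the endomorphisms). This produces the RB-operator $B(hl) = \chi(l)$. To detect when it is splitting, I would invoke the Remark following Proposition~\ref{SplittingCond}, by which $B$ is splitting if and only if $B^2(g) = B(g)^{-1}$ for all $g$. Since $B^2(hl) = \chi^2(l)$ and $B(hl)^{-1} = \chi(l)^{-1}$, the operator $B$ is splitting exactly when $\chi^2(l) = \chi(l)^{-1}$ for every $l\in L$; writing $L$ additively, this reads $\chi^2 + \chi = 0$ in $\mathrm{End}(L)$. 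Hence it suffices to exhibit a single endomorphism $\chi$ with $\chi^2 + \chi \neq 0$.

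The heart of part~(b), and the only place requiring a case split, is producing such a $\chi$ under the hypothesis $|L|>2$. If $L$ has an element of order greater than $2$, I would take $\chi = \id_L$: then $\chi^2 + \chi = 2\,\id_L$ is nonzero on that element, so already $B(hl)=l$ is non-splitting. Otherwise $L$ has exponent $2$, i.e.\ it is a vector space over $\mathbb{F}_2$, and $|L|>2$ forces $\dim_{\mathbb{F}_2}L \geq 2$; in characteristic $2$ the condition $\chi^2+\chi=0$ just says $\chi$ is idempotent, so I would take any nonzero nilpotent linear map $\chi$ (for example, sending one basis vector to another and the rest to $0$), which satisfies $\chi^2 = 0 \neq \chi$. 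In either case the resulting $B$ is a non-splitting RB-operator on $G = H\times L$. The main subtlety I anticipate is exactly that the bound $|L|>2$ is sharp: on $L = \mathbb{F}_2$ every endomorphism is idempotent, so every $B$ of this form is splitting, and the hypothesis cannot be dropped.
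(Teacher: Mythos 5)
Your proof is correct, and part (a) is exactly the componentwise verification that the paper leaves as ``straightforward''. In part (b) you and the paper use the same two ingredients --- operators of the form $B(hl)=\chi(l)$ (Rota---Baxter by Proposition~\ref{prop:RB-Hom}) and the splitting criterion of Proposition~\ref{SplittingCond} --- but the execution differs. The paper argues by contradiction and restricts to \emph{automorphisms} $\psi$ of $L$: taking $\psi=\id$ forces $L$ to be elementary abelian of exponent~$2$, and it then invokes the map interchanging two nonzero elements $l_1,l_2$ and fixing all other elements; as literally stated, that map is an automorphism only when $|L|=4$ (for larger elementary abelian $2$-groups a transposition fixing everything else is not additive, though the argument is easily repaired by swapping two basis vectors and extending linearly). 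You instead allow arbitrary \emph{endomorphisms} $\chi$, translate the splitting condition into the clean identity $\chi^2+\chi=0$ in $\mathrm{End}(L)$ via the Remark after Proposition~\ref{SplittingCond}, and in the exponent-$2$ case choose a nonzero nilpotent $\chi$, which is well defined in any rank. So your route is a direct construction rather than a contradiction, it isolates the sharp algebraic criterion, and it sidesteps the transposition issue; the paper's route, in exchange, produces a bijective (automorphism-induced) counterexample. One caveat on your closing sharpness remark: it is true that for $L=\mathbb{Z}_2$ your construction yields only splitting operators, so the hypothesis $|L|>2$ cannot be dropped if nothing is assumed about $H$ (take $H$ trivial); but for particular $H$ non-splitting operators may still exist when $|L|=2$ --- for instance $H=\mathbb{Z}_3$, $L=\mathbb{Z}_2$ gives $G\cong\mathbb{Z}_6$, on which the identity map is a non-splitting Rota---Baxter operator.
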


\begin{proof}
a) Straightforward.

b) On the contrary, suppose that every Rota---Baxter operator on $G$ is splitting.
Define a Rota---Baxter operator~$B(\psi)$ on~$G$
with the help of an automorphism~$\psi$ of $L$ as follows,
$B(\psi)((h,l)) = \psi(l)$, by Proposition~\ref{prop:RB-Hom}
it is a~Rota---Baxter operator on~$G$.
By Proposition~\ref{SplittingCond}, we should have
$B(lB(l)) = \psi(l)\psi^2(l) = e$ for all $l\in L$.
Take $\psi = \id$, so we have $l^2 = e$ for all $l\in L$.
Since $L$ is abelian, we get that $L$ is a~direct sum of copies of $\mathbb{Z}_2$.
By the condition $|L|>2$, so we may find nonzero and different $l_1,l_2\in L$.
Let us consider an automorphism~$\psi$ of~$L$ which interchanges $l_1$ and $l_2$
and does not change all other elements.
It is easy to see that $B(\psi)$ is non-splitting. We arrive at a~contradiction.
\end{proof}

It is easy to check that the following example gives Rota---Baxter operators on $G^n$,
below we will generalize it.

\begin{example}\label{exm:direct}
Let $G^n = G\times G\times \ldots \times G$.
Then the following maps from $G^n$ to $G^n$
\begin{gather*}
B((g_1,\ldots,g_n)) = (e,g_1,g_2g_1,g_3g_2g_1,\ldots,g_{n-1}g_{n-2}\ldots g_1), \\
\widetilde{B}((g_1,\ldots,g_n)) = (g_1^{-1},g_2^{-1}g_1^{-1},g_3^{-1}g_2^{-1}g_1^{-1},\ldots,
 g_n^{-1}g_{n-1}^{-1}g_{n-2}^{-1}\ldots g_1^{-1})
\end{gather*}
are Rota---Baxter operators.
\end{example}

These Rota---Baxter operators may be extended to the ones
on the infinite Cartesian product of $G$.

Let $R$ be a~Rota---Baxter operator of weight~1 on
the direct sum of fields (considered as associative algebra)
$\Bbbk^n = \Bbbk e_1\oplus \Bbbk e_2\oplus \ldots \oplus \Bbbk e_n$, where $e_ie_j = \delta_{ij}e_i$.
A~linear operator $R(e_i) = \sum\limits_{k=1}^n r_{ik}e_k$,
$r_{ik}\in \Bbbk$, is an RB-operator of weight~1 on~$\Bbbk^n$
if and only if the following conditions are satisfied~\cite{AnBai,Braga}:

(1) $r_{ii} = 0$ and $r_{ik}\in\{0,1\}$
or $r_{ii} = -1$ and $r_{ik}\in\{0,-1\}$ for all $k\neq i$;

(2) if $r_{ik} = r_{ki} = 0$ for $i\neq k$,
then $r_{il}r_{kl} = 0$ for all $l\not\in\{i,k\}$;

(3) if $r_{ik}\neq0$ for $i\neq k$,
then $r_{ki} = 0$ and
$r_{kl} = 0$ or $r_{il} = r_{ik}$ for all $l\not\in\{i,k\}$.

Moreover, we may suppose that the matrix of $R$ is upper-triangular~\cite{Braga,SumOfFields}.

\begin{theorem}\label{theo:directProduct}
Let $G^n = G\times G\times \ldots \times G$ and let $\Bbbk$ be a~field.
Let $R$ be a~Rota---Baxter operator of weight~1 on
$\Bbbk e_1\oplus \Bbbk e_2\oplus \ldots \oplus \Bbbk e_n$,
$R(e_i) = \sum\limits_{i=1}^n r_{ik}e_k$, and the matrix of $R$ is upper-triangular.
Define a~map $B\colon G^n\to G^n$ as follows,
\begin{equation} \label{RBOnDirectProduct}
B((g_1,\ldots,g_n)) = (t_1,\ldots,t_n),\quad
t_i = g_i^{r_{ii}}g_{i-1}^{r_{i-1\,i}}\ldots g_1^{r_{1i}}.
\end{equation}
Then $B$ is a Rota---Baxter operator on $G^n$.
\end{theorem}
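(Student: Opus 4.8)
The plan is to verify the defining identity \eqref{RB} one coordinate at a time. Writing $\mathbf g=(g_1,\dots,g_n)$ and $\mathbf h=(h_1,\dots,h_n)$, set $s_i=(B(\mathbf g))_i=g_i^{r_{ii}}g_{i-1}^{r_{i-1\,i}}\cdots g_1^{r_{1i}}$ and $u_i=(B(\mathbf h))_i=h_i^{r_{ii}}h_{i-1}^{r_{i-1\,i}}\cdots h_1^{r_{1i}}$. Since the product in $G^n$ is coordinatewise, the $i$-th coordinate of $\mathbf g\,B(\mathbf g)\,\mathbf h\,B(\mathbf g)^{-1}$ equals $w_i:=g_i s_i h_i s_i^{-1}$, so that \eqref{RB} amounts to the family of identities
\[
s_i u_i=\prod_{j=i}^{1}w_j^{\,r_{ji}},\qquad i=1,\dots,n,
\]
one for each column of the matrix of $R$. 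I would establish each of them separately; the case of an empty column is trivial, and the machinery below applies uniformly to every $i$.

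Fix a column $i$ and let its support be $C_i=\{\,j\le i:r_{ji}\neq0\,\}=\{p_1>p_2>\dots>p_m\}$, so that $s_i=g_{p_1}^{\epsilon_1}\cdots g_{p_m}^{\epsilon_m}$ with $\epsilon_a:=r_{p_a i}\in\{1,-1\}$. The heart of the argument is to read off, from the upper-triangularity and the conditions (1)--(3), the combinatorial backbone of the construction. Condition (2) shows that $C_i$ is closed under passing to earlier columns: if $k<j$ both lie in $C_i$ then $r_{kj}\neq0$, since otherwise $r_{kj}=r_{jk}=0$ would force $r_{ki}r_{ji}=0$. Condition (3), applied to the pair $(k,j)$ with $l=i$, then pins the value down to $r_{kj}=r_{ki}$. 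Finally, condition (1) ties the sign to the diagonal, $\epsilon_a=1$ when $r_{p_a p_a}=0$ and $\epsilon_a=-1$ when $r_{p_a p_a}=-1$. Combining these, the support of column $p_a$ is exactly the initial segment $\{p_{a+1},\dots,p_m\}$ together with $p_a$ itself precisely when $\epsilon_a=-1$, and all exponents there agree with the corresponding ones in column $i$.

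With this in hand the verification collapses to a telescoping computation. Put $\sigma_a:=g_{p_{a+1}}^{\epsilon_{a+1}}\cdots g_{p_m}^{\epsilon_m}$ for $0\le a\le m$, so that $\sigma_0=s_i$, $\sigma_m=e$, and $\sigma_{a-1}=g_{p_a}^{\epsilon_a}\sigma_a$. The structure just described gives $s_{p_a}=\sigma_a$ when $\epsilon_a=1$ and $s_{p_a}=g_{p_a}^{-1}\sigma_a=\sigma_{a-1}$ when $\epsilon_a=-1$. In either case one checks the single uniform formula
\[
w_{p_a}^{\,\epsilon_a}=\bigl(g_{p_a}s_{p_a}h_{p_a}s_{p_a}^{-1}\bigr)^{\epsilon_a}=\sigma_{a-1}\,h_{p_a}^{\epsilon_a}\,\sigma_a^{-1},
\]
the two admissible sign patterns producing the same answer. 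Multiplying over $a=1,\dots,m$, the middle factors $\sigma_a^{-1}\sigma_a$ cancel and the product telescopes to $\sigma_0\,(h_{p_1}^{\epsilon_1}\cdots h_{p_m}^{\epsilon_m})\,\sigma_m^{-1}=s_i u_i$, which is the $i$-th identity; as $i$ was arbitrary, $B$ satisfies \eqref{RB}.

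The genuinely delicate step is the middle one: translating the three numerical conditions on the matrix of $R$ into the statement that the column supports are nested initial segments with matching exponents. Once that is in place, both the coordinatewise reduction and the final telescoping are essentially forced, and it is precisely the uniform expression for $w_{p_a}^{\epsilon_a}$ that lets the two cases $r_{jj}=0$ and $r_{jj}=-1$ merge into one clean cancellation. One simplification worth recording as a preliminary remark is that every principal submatrix of $R$ again satisfies (1)--(3) and stays upper-triangular; this lets the whole statement be organised, if preferred, as an induction on $n$ in which coordinates $1,\dots,n-1$ are inherited and only the last column demands the computation above.
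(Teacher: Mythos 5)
Your proof is correct --- I checked both the structure lemma and the telescoping --- but it is organized genuinely differently from the paper's. The paper makes the same first move (reduce \eqref{RB} to one identity per coordinate), but then argues by induction on the coordinate index $i$: it peels off the diagonal entry $r_{ii}$ (trivial if $r_{ii}=0$, a short cancellation if $r_{ii}=-1$) and invokes ``the induction hypothesis for $i-1$ with the coefficients $r'_{s\,i-1}=r_{si}$'', i.e.\ with column $i$ transplanted into position $i-1$, the inner factors $t_j$ staying as they are. Conditions (2) and (3) never appear explicitly in the paper's proof; they are hidden in the legitimacy of that transplantation. Indeed, the transplanted coefficient system need not itself satisfy condition (1) --- its new diagonal entry can be $+1$ (take $r_{11}=0$, $r_{12}=r_{13}=r_{23}=1$, $r_{22}=r_{33}=0$ and $i=3$) --- so the inductive statement is, strictly speaking, not closed under the paper's own reduction; to make it rigorous one must enlarge it to ``mixed'' data, which is exactly the nesting property you isolate. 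Your route replaces the induction by the explicit structure lemma: conditions (1)--(3) force the column supports to form a chain with matching exponents, after which the identity telescopes in one line. This is what your argument buys: it pinpoints where (2) and (3) enter, and they are genuinely needed --- e.g.\ the map $B((g_1,g_2,g_3))=(e,g_1,g_2)$, coming from an upper-triangular matrix satisfying (1) but violating (3), is not an RB-operator on $G^3$ for non-abelian $G$ --- and it closes the gap the paper's induction glosses over. The paper's proof, in exchange, is shorter and needs no support bookkeeping. Your final remark on principal submatrices would even let you recast your argument as an induction on $n$ in the paper's layout, with the structure lemma supplying the justification the paper leaves implicit.
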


\begin{proof}
Write down the left and right hand sides of~\eqref{RB} for~$B$, where
$g = (g_1,\ldots,g_n)$ and $h = (h_1,\ldots,h_n)$:
\begin{gather}
B(g)B(h)
 = \big(g_1^{r_{11}}h_1^{r_{11}},g_2^{r_{22}}g_1^{r_{12}}h_2^{r_{22}}h_1^{r_{12}},\ldots,
 g_n^{r_{nn}}g_{i-1}^{r_{n-1\,n}}\ldots g_1^{r_{1n}}
 h_n^{r_{nn}}h_{i-1}^{r_{n-1\,n}}\ldots h_1^{r_{1n}}\big), \label{LeftRBSumFields} \\
B(gB(g)hB(g)^{-1})
 = \big(t_1^{r_{11}},t_2^{r_{22}}t_1^{r_{12}},\ldots,
 t_n^{r_{nn}}t_{n-1}^{r_{n-1\,n}}\ldots t_1^{r_{1n}}\big), \label{RightRBSumFields}
\end{gather}
where
\begin{gather*}
t_1 = g_1g_1^{r_{11}}h_1g_1^{-r_{11}}, \quad
t_2 = g_2g_2^{r_{22}}g_1^{r_{12}}h_2g_1^{-r_{12}}g_2^{r_{22}}, \quad \ldots, \\
t_n = g_n g_n^{r_{nn}}g_{n-1}^{r_{n-1\,n}}\ldots g_1^{r_{1n}}h_n g_1^{-r_{1n}}\ldots
 g_{n-1}^{-r_{n-1\,n}}g_n^{-r_{nn}}.
\end{gather*}

Let us prove by induction on $i=1,\ldots,n$ that the $i$-coordinates
of~\eqref{LeftRBSumFields} and~\eqref{RightRBSumFields} are equal.
For $i = 1$, we have
$$
g_1^{r_{11}}h_1^{r_{11}}
 = \big(g_1g_1^{r_{11}}h_1g_1^{-r_{11}}\big)^{r_{11}},
$$
since when $r_{11} = 0$ it is trivial equality $e = e$,
and when $r_{11} = -1$ we get $g_1^{-1}h_1^{-1} = (h_1g_1)^{-1}$.

Suppose that we have proved that $i$-coordinates
of~\eqref{LeftRBSumFields} and~\eqref{RightRBSumFields} are equal
for all $i<k$, where $2\leq k\leq n$. Let us prove the equality
$$
g_i^{r_{ii}}g_{i-1}^{r_{i-1\,i}}\ldots g_1^{r_{1i}}
h_i^{r_{ii}}h_{i-1}^{r_{i-1\,i}}\ldots h_1^{r_{1i}}
 = t_i^{r_{ii}}t_{i-1}^{r_{i-1\,i}}\ldots t_1^{r_{1i}}
$$
for $i = k$.
If $r_{ii} = 0$, then we may apply the induction hypothesis for $i-1$ and
for the coefficients $r'_{s\,i-1} = r_{si}$.
If $r_{ii} = -1$, then we again apply the induction hypothesis for $i-1$ and
for the coefficients $r'_{s\,i-1} = r_{si}$ to derive that
\begin{multline*}
t_i^{r_{ii}}t_{i-1}^{r_{i-1\,i}}\ldots t_1^{r_{1i}}
 = (\underline{g_i g_i^{-1}}g_{i-1}^{r_{i-1\,i}}\ldots g_1^{r_{1i}}h_i g_1^{-r_{1i}}
 \ldots g_{i-1}^{-r_{i-1\,i}}g_i)^{-1}
 t_{i-1}^{r_{i-1\,i}}\ldots t_1^{r_{1i}} \\
 = g_i^{-1}g_{i-1}^{r_{i-1\,i}}\ldots g_1^{r_{1i}}h_i^{-1}\underline{g_1^{-r_{1i}}
 \ldots g_{i-1}^{-r_{i-1\,i}}
 g_{i-1}^{r_{i-1\,i}}\ldots g_1^{r_{1i}}}h_{i-1}^{r_{i-1\,i}}\ldots h_1^{r_{1i}} \\
 = g_i^{r_{ii}}g_{i-1}^{r_{i-1\,i}}\ldots g_1^{r_{1i}}
h_i^{r_{ii}}h_{i-1}^{r_{i-1\,i}}\ldots h_1^{r_{1i}}. \qedhere
\end{multline*}
\end{proof}

\begin{remark}
Analogously to~\cite{SumOfFields}, we may state that $G^n_B\cong G^n$.
\end{remark}

\begin{corollary}
We may slightly modify~\eqref{RBOnDirectProduct} as follows.
Let $\psi_2,\ldots,\psi_n\in\Aut(G)$. Then a~map $P\colon G^n\to G^n$
defined by the formula
\begin{gather*}
P((g_1,\ldots,g_n)) = (t_1,\ldots,t_n),\\
t_1 = g_1^{r_{11}},\quad
t_i = g_i^{r_{ii}}\psi_i\big(g_{i-1}^{r_{i-1\,i}}\psi_{i-1}\big(g_{i-2}^{r_{i-2\,i}}
 \ldots \psi_2\big(g_1^{r_{1i}}\big)\big)\big),\ i\geq2,
\end{gather*}
is a Rota---Baxter operator on $G^n$.
\end{corollary}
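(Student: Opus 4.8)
The plan is to exhibit $P$ as the conjugate of the operator $B$ from Theorem~\ref{theo:directProduct} by a suitable automorphism of $G^n$, and then to invoke Lemma~\ref{lem:Aut}. Since each $\psi_i$ is an automorphism of $G$, any coordinate-wise map $\Psi((g_1,\ldots,g_n)) = (\alpha_1(g_1),\ldots,\alpha_n(g_n))$ with $\alpha_i\in\Aut(G)$ is an automorphism of $G^n$, so $B^{(\Psi)} = \Psi^{-1}B\Psi$ is automatically a Rota---Baxter operator whenever $B$ is. It therefore suffices to choose the $\alpha_i$ so that $B^{(\Psi)}$ coincides with $P$; no re-derivation of~\eqref{RB} will be needed.

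First I would guess the automorphisms. Setting $\alpha_1 = \id$ and $\alpha_i = (\psi_i\psi_{i-1}\cdots\psi_2)^{-1}$ for $i\geq2$ (with the convention that the rightmost factor $\psi_2$ is applied first), I would compute the $i$-th coordinate of $\Psi^{-1}B\Psi$ as $\alpha_i^{-1}$ applied to $\alpha_i(g_i^{r_{ii}})\alpha_{i-1}(g_{i-1}^{r_{i-1\,i}})\cdots\alpha_1(g_1^{r_{1i}})$. Because each $\alpha_k$ is a homomorphism, this factors as $g_i^{r_{ii}}\cdot(\alpha_i^{-1}\alpha_{i-1})(g_{i-1}^{r_{i-1\,i}})\cdots(\alpha_i^{-1}\alpha_1)(g_1^{r_{1i}})$.

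The key step is the telescoping identity $\alpha_i^{-1}\alpha_j = \psi_i\psi_{i-1}\cdots\psi_{j+1}$ for $j<i$. This follows by cancellation: $\alpha_i^{-1}\alpha_j = (\psi_i\cdots\psi_2)(\psi_2^{-1}\cdots\psi_j^{-1})$, in which the inner blocks $\psi_2^{-1}\psi_2,\ \psi_3^{-1}\psi_3,\ \ldots,\ \psi_j^{-1}\psi_j$ cancel successively, leaving exactly $\psi_i\cdots\psi_{j+1}$. Substituting this into the factored expression and recollecting the nested automorphisms from the inside out (the innermost factor $\psi_i\psi_{i-1}\cdots\psi_2(g_1^{r_{1i}})$ coming from $j=1$, the next from $j=2$, and so on) reproduces precisely the formula for $t_i$ in the statement. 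Hence $P = B^{(\Psi)}$.

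I expect no serious obstacle: the only care required is in fixing the composition convention for the $\psi_i$ and in checking the cancellation, both of which are routine. The genuine content of the corollary is just the observation that inserting automorphisms coordinate-wise is an instance of conjugation by an automorphism of $G^n$, after which Lemma~\ref{lem:Aut} applies directly to Theorem~\ref{theo:directProduct}.
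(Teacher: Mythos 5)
Your proposal is correct and is essentially the paper's own proof: the paper conjugates the operator $B$ of Theorem~\ref{theo:directProduct} by the coordinate-wise automorphism $\varphi((g_1,\ldots,g_n)) = (g_1,\psi_2^{-1}(g_2),\ldots,\psi_2^{-1}\cdots\psi_n^{-1}(g_n))$, which is exactly your $\Psi$ with $\alpha_i = (\psi_i\cdots\psi_2)^{-1}$, and invokes Lemma~\ref{lem:Aut}. Your write-up even supplies the telescoping verification that $P = B^{(\varphi)}$, which the paper leaves as a one-line ``note''.
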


\begin{proof}
Define an automorphism $\varphi$ of $G^n$ as follows,
$$
\varphi((g_1,g_2,g_3,\ldots,g_n))
 = \big(g_1,\psi_2^{-1}(g_2),\psi_2^{-1}\psi_3^{-1}(g_3),\ldots,\psi_2^{-1}\ldots \psi_n^{-1}(g_n)\big).
$$
By Lemma~\ref{lem:Aut} we have that $B^{(\varphi)}$ is a Rota---Baxter operator on $G^n$,
where $B$ is defined by~\eqref{RBOnDirectProduct}. Note that $P = B^{(\varphi)}$.
\end{proof}

\begin{corollary}
Given a group $G = H\times H\times L$ for some $L$ and a non-trivial group~$H$,
the map $B\colon G\to G$ defined as follows,
$B((h_1,h_2,l)) = (e,h_1,e)$ is a non-splitting Rota---Baxter operator.
\end{corollary}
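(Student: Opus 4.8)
The plan is to identify $B$ as an instance of the direct-product construction and then to certify non-splitting through the criterion of Proposition~\ref{SplittingCond}. For the first part I would write $G = (H\times H)\times L$ and treat the two pieces separately. On the factor $H\times H$ the assignment $(h_1,h_2)\mapsto(e,h_1)$ is exactly the $n=2$ case of Example~\ref{exm:direct} (equivalently, the operator of Theorem~\ref{theo:directProduct} for the upper-triangular matrix on $\Bbbk^2$ with $r_{12}=1$ and all remaining entries zero), hence a Rota---Baxter operator; on the factor $L$ the map is the trivial operator $B_0$. Proposition~\ref{direct}a then assembles these two Rota---Baxter operators into the Rota---Baxter operator $B((h_1,h_2,l)) = (e,h_1,e)$ on $G$.

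Should a self-contained argument be preferred, I would instead verify~\eqref{RB} by a direct coordinate computation: for $g=(a_1,a_2,x)$ and $h=(b_1,b_2,y)$ one has $B(g)B(h)=(e,a_1b_1,e)$, while $gB(g)hB(g)^{-1} = (a_1b_1,\,a_2a_1b_2a_1^{-1},\,xy)$, so that applying $B$ again returns $(e,a_1b_1,e)$. The first and third coordinates are forced to $e$, and only the middle coordinate carries information, where the identity reduces to reading off the first entry $a_1b_1$.

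For non-splitting I would appeal to the Remark following Proposition~\ref{SplittingCond}, which states that a splitting operator must invert every element of its image. Here $\Imm B = \{e\}\times H\times\{e\}$, and for a typical element $(e,h,e)$ of this image one computes $B((e,h,e)) = (e,e,e) = e$, whereas the inverse of $(e,h,e)$ is $(e,h^{-1},e)$. Since $H$ is non-trivial we may choose $h\neq e$, for which these two elements differ; equivalently, $B^2 = B_0$ fails to agree with the inversion map on $\Imm B$. Thus $B$ does not invert its image and is not splitting.

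The computation contains no genuine difficulty; the only point demanding care is that Theorem~\ref{theo:directProduct} is stated for the equal-factor product $G^n$, whereas the third factor $L$ here need not coincide with $H$. This is harmless: since $B$ annihilates the $L$-coordinate and mixes only the two identical $H$-coordinates, viewing $G$ as $(H\times H)\times L$ and invoking the direct-product Proposition~\ref{direct}a (or simply performing the coordinate computation above) sidesteps the issue entirely.
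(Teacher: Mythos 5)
Your proof is correct and takes essentially the same route as the paper: the Rota--Baxter property is obtained by combining the operator of Example~\ref{exm:direct} (the $n=2$ case) on the $H\times H$ factor with the trivial operator on $L$, and non-splitting is certified through Proposition~\ref{SplittingCond} via the same witness $g=(h_1,h_2,l)$ with $h_1\neq e$ --- your phrasing through the Remark ($B^2(g)=B(g)^{-1}$ must hold for splitting operators) is just an equivalent reformulation of the paper's direct computation $B(gB(g))=(e,h_1,e)\neq e$. If anything, your citation of Proposition~\ref{direct}a for assembling the two factors is cleaner than the paper's appeal to Proposition~\ref{prop:RB-Hom}, whose abelian-image hypothesis would not cover non-abelian $H$ on its own.
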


\begin{proof}
Applying Example~\ref{exm:direct} and Proposition~\ref{prop:RB-Hom},
we conclude that $B$ is a Rota---Baxter operator on $G$.
Denote $g = (h_1,h_2,l)$. Since $B(gB(g)) = (e,h_1,e_L)$ is not the identity of~$G$
when $h_1\neq e$, the operator $B$ is not splitting by~Proposition~\ref{SplittingCond}.
\end{proof}

\section{Extensions of maps to RB-operators}

In the previous sections we have considered some possibilities
to construct RB-operators on  groups.
In general case we formulate

\begin{question}
Let $G = \langle A \rangle$ be a group generated by a set $A = \{ a_i \mid i \in I \}$.
Suppose that  $U = \{u_i\}_{i \in I}$ is a subset of $G$ with the same cardinality as $A$.
Under which conditions the map $\beta \colon A \to U$, $a_i \mapsto u_i$, $i \in I$,
can be (uniquely) extended to an RB-operator $B \colon G \to G$?
\end{question}

Suppose at first that we know that $(G, B)$ is an RB-group.
What is a minimal subset $M \subseteq G$ such that the values $B(m)$
for all $m \in M$ uniquely define the values $B(g)$ for all elements of $G$?

The next theorem gives the answer on this question.

\begin{theorem} \label{t}
Let $G$ be a group, let $A = \{a_i\}_{i \in I}$ be some its subset,
and let $B$ an RB-operator on $G$.

a) We can find the values of $B$ on all elements of the subgroup
$\langle A \rangle_{B} \leq (G, \circ)$ by the values $B(a_i)$, $a_i \in A$.
The image $B(\langle A \rangle_{B})$ is generated by elements $B(a_i)$, $i \in G$.

b) If $A$ is a generating set of $G_B$,
then the values $B$ on $A$ uniquely define the values of~$B$ on all elements of $G$.
\end{theorem}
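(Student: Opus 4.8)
The plan is to reduce everything to the single fact, recorded in Proposition~\ref{prop:Derived}c, that $B$ is a group homomorphism from $G_B=(G,\circ)$ to $(G,\cdot)$. Once this is in hand, both parts become immediate consequences of the way the values of a homomorphism are forced by its values on a generating set, combined with the explicit word formula~\eqref{WordInG_B} of Proposition~\ref{for}.

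For part a), I would take an arbitrary element $w$ of $\langle A\rangle_B$, the subgroup of $G_B$ generated by $A$, and write it as a $\circ$-word
$$
w=a_{i_1}^{\circ(k_1)}\circ a_{i_2}^{\circ(k_2)}\circ\cdots\circ a_{i_s}^{\circ(k_s)},\quad a_{i_j}\in A,\ k_j\in\mathbb{Z}.
$$
Applying the homomorphism $B\colon G_B\to(G,\cdot)$ term by term, and using that $B(a^{\circ(k)})=B(a)^k$ (which for $k<0$ is exactly the content of Lemma~\ref{lem:elementary}d, cf. the computation of $a^{\circ(-1)}$ in Proposition~\ref{for}), gives
$$
B(w)=B(a_{i_1})^{k_1}B(a_{i_2})^{k_2}\cdots B(a_{i_s})^{k_s}.
$$
Thus $B(w)$ is expressed purely in terms of the known values $B(a_i)$, which proves the first assertion; and since the right-hand side ranges exactly over all $\cdot$-products of the $B(a_i)^{\pm1}$ as $w$ ranges over $\langle A\rangle_B$, we obtain $B(\langle A\rangle_B)=\langle B(a_i)\mid i\in I\rangle$, the homomorphic image of a generated subgroup being generated by the images of the generators.

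For part b), I would simply observe that if $A$ generates $G_B$, then $\langle A\rangle_B=G_B$, and $G_B$ has the same underlying set as $G$; hence part a) already determines $B(g)$ for every $g\in G$ from the values $B(a_i)$. The only point requiring care, and the nearest thing to an obstacle, is well-definedness: two different $\circ$-words may represent the same element $w$, so a priori the formula for $B(w)$ could be ambiguous. This is resolved precisely because $B$ is an honest group homomorphism out of $G_B$, so $B(w)$ depends only on the element $w\in G_B$ and not on its chosen word, making the expression in part a) consistent.
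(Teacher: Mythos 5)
Your proposal is correct and follows essentially the same route as the paper: both arguments rest on Proposition~\ref{prop:Derived}c (that $B\colon G_B\to(G,\cdot)$ is a group homomorphism), write an element of $\langle A\rangle_B$ as a $\circ$-word in the generators, and read off $B(w)=B(a_{i_1})^{k_1}\cdots B(a_{i_s})^{k_s}$, with part b) following since $\langle A\rangle_B=G_B$ shares its underlying set with $G$. Your added remarks on well-definedness and on the image being generated by the $B(a_i)$ are sound and only make explicit what the paper leaves implicit.
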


\begin{proof}
a) By assumption $(G, B)$ is a Rota---Baxter group and $B(a_i) = u_i$, $i\in I$.
Suppose that an element $g$ of $\langle A \rangle_{B}$ is presented by some word
$$
w = a_{i_1}^{\circ(k_1)} \circ a_{i_2}^{\circ(k_2)} \circ \ldots
    \circ a_{i_s}^{\circ(k_s)},\quad a_{i_j}\in A,\ k_{i_j} \in \mathbb{Z}.
$$
Since $B$ is a homomorphism from $G_B$ to $G$, we conclude that
$$
B(w) = B\big(a_{i_1}^{\circ(k_1)}\big) B\big(a_{i_2}^{\circ(k_2)}\big)
 \ldots B\big(a_{i_s}^{\circ(k_s)}\big) = u_{i_1}^{k_1} u_{i_2}^{k_2} \ldots u_{i_s}^{k_s}.
$$
Hence, we can find the values of $B$ on the group that
is generated by the set $A$ under the product $\circ$.

b) If the set $\langle A \rangle_{B}$ is equal $G$,
then we can find all values $B(g)$, $g \in G$ by the values $B(a_i)$, $a_i \in A$.
\end{proof}

Now we consider a question on construction of RB-operators on an arbitrary group~$G$.
In this case we do not know generating set of $G_B$,
anyway we can take a generating set of~$G$ and define a map $\beta$ on this set.
Let $G = \langle A \rangle$ be a~group generated by a~set $A = \{ a_i \mid i \in I \}$.
Suppose that $U = \{u_i\}_{i \in I}$ is a subset of~$G$ with the same cardinality as~$A$
and the map $\beta \colon A \to U$ is defined by $\beta(a_i) = u_i$, $i \in I$.

Let us construct a~group $\bar{G}_{\beta}$.
To do this, we take a set of letters
$\bar{A} = \{\bar{a_i} \mid i \in I \}$ which does not intersect with~$A$
and has the same cardinality as~$A$.
Let $\langle \bar{A} \rangle$ be a~free group with basis $\bar{A}$.
We denote the operation in this group by~$\circ$.
Any element of $\langle \bar{A} \rangle$ can be uniquely presented
by a~reduced word of the form
$$
w = \bar{a}_{i_1}^{\circ(k_1)} \circ \bar{a}_{i_2}^{\circ(k_2)}
 \circ \ldots \circ \bar{a}_{i_s}^{\circ(k_s)},
$$
where $\bar{a}_{i_j} \in \bar{A}$, $k_{i_j} \in \mathbb{Z} \setminus \{0 \}$.
Define a~map $\bar{\beta} \colon \langle \bar{A} \rangle \to G$ by the rule
$$
\bar{\beta}(w) = \beta(a_{i_1})^{k_1} \beta(a_{i_2})^{k_2} \ldots \beta(a_{i_s})^{k_s}
 = u_{i_1}^{k_1}  u_{i_2}^{k_2}  \ldots u_{i_s}^{k_s}.
$$
It is easy to see that this map is a homomorphism of
$\langle \bar{A} \rangle$ to $\langle U \rangle$.

Define a map $\pi \colon \langle \bar{A} \rangle \to G$ by the formula
$$
\pi(w) = (a_{i_1}\beta(a_{i_1}))^{k_1}
    (a_{i_2}\beta(a_{i_2}))^{k_2} \ldots
    (a_{i_s}\beta(a_{i_s}))^{k_s}
    \beta(a_{i_s})^{-k_s}
    \beta(a_{i_{s-1}})^{-k_{s-1}} \ldots
    \beta(a_{i_1})^{-k_1}.
    $$
The following lemma clarifies the connection between these two maps.

\begin{lemma} \label{l7.3}
Let $w, w' \in \langle \bar{A} \rangle$, then

a) $\pi(w^{\circ({-1})}) = \bar{\beta}(w)^{-1} \pi(w)^{-1} \bar{\beta}(w)$,

b) $\pi(w \circ w' ) =  \pi(w) \bar{\beta}(w) \pi(w')   \bar{\beta}(w)^{-1}$,

c) $\pi((w')^{\circ(-1)} \circ w \circ w')
 = \bar{\beta}(w')^{-1} \pi(w')^{-1} \pi(w)
 \bar{\beta}(w) \pi(w') \bar{\beta}(w)^{-1} \bar{\beta}(w')$.
\end{lemma}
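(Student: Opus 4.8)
The plan is to factor $\pi$ through two genuine homomorphisms and reduce all three identities to the homomorphism property. The central observation is that $\bar{\beta}$ and the auxiliary map $\pi_+\colon \langle \bar{A}\rangle\to G$, defined by $\pi_+(w) = \pi(w)\bar{\beta}(w)$, are both homomorphisms from the free group $(\langle \bar{A}\rangle,\circ)$ to $(G,\cdot)$. That $\bar{\beta}$ is a homomorphism is already recorded in the text. For $\pi_+$, I would first verify the telescoping identity: for the reduced word $w = \bar{a}_{i_1}^{\circ(k_1)}\circ\ldots\circ\bar{a}_{i_s}^{\circ(k_s)}$ the tail $\beta(a_{i_s})^{-k_s}\ldots\beta(a_{i_1})^{-k_1}$ of $\pi(w)$ is exactly $\bar{\beta}(w)^{-1}$, so multiplying $\pi(w)$ on the right by $\bar{\beta}(w)$ cancels it and leaves
\[
\pi_+(w) = (a_{i_1}\beta(a_{i_1}))^{k_1}(a_{i_2}\beta(a_{i_2}))^{k_2}\ldots(a_{i_s}\beta(a_{i_s}))^{k_s}.
\]
Since $\langle \bar{A}\rangle$ is free on $\bar{A}$, the assignment $\bar{a}_i\mapsto a_i\beta(a_i)$ extends to a unique homomorphism $(\langle \bar{A}\rangle,\circ)\to(G,\cdot)$ whose value on $w$ is precisely this product; hence $\pi_+$ is a homomorphism and $\pi(w) = \pi_+(w)\,\bar{\beta}(w)^{-1}$. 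This is the free-group analogue of formula~\eqref{WordInG_B}, with $\pi_+$ and $\bar{\beta}$ playing the roles of $B_+$ and $B$.

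With the factorization $\pi = \pi_+\,\bar{\beta}^{-1}$ established, the three identities become bookkeeping. Because $\pi_+$ and $\bar{\beta}$ respect $\circ$, they send $w^{\circ(-1)}$ to $\pi_+(w)^{-1}$ and $\bar{\beta}(w)^{-1}$, and send $w\circ w'$ to $\pi_+(w)\pi_+(w')$ and $\bar{\beta}(w)\bar{\beta}(w')$. For part~a) this gives $\pi(w^{\circ(-1)}) = \pi_+(w)^{-1}\bar{\beta}(w)$, which I would compare with the claimed right-hand side using $\pi(w)^{-1} = \bar{\beta}(w)\pi_+(w)^{-1}$, so that $\bar{\beta}(w)^{-1}\pi(w)^{-1}\bar{\beta}(w) = \pi_+(w)^{-1}\bar{\beta}(w)$. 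For part~b) I would use $\pi(w)\bar{\beta}(w) = \pi_+(w)$ to collapse the right-hand side to $\pi_+(w)\pi(w') = \pi_+(w)\pi_+(w')\bar{\beta}(w')^{-1}$, and independently compute $\pi(w\circ w') = \pi_+(w)\pi_+(w')\,\bar{\beta}(w')^{-1}\bar{\beta}(w)^{-1}\cdot\bar{\beta}(w)$, noting these agree. Part~c) then follows by applying b) twice and a) once, or equally by direct substitution of $\pi = \pi_+\bar{\beta}^{-1}$ into both sides and cancelling the adjacent $\bar{\beta}(w')^{-1}\bar{\beta}(w')$ and $\bar{\beta}(w)^{-1}\bar{\beta}(w)$ factors.

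The only real content is the telescoping/factorization step; after it, everything is routine. The mild subtlety to watch is well-definedness: $\pi$ is specified only on reduced words, so one cannot naively substitute an unreduced expression such as $(w')^{\circ(-1)}\circ w\circ w'$ into its defining formula before reducing. Routing through the honest homomorphisms $\pi_+$ and $\bar{\beta}$, which are defined on all of the free group and automatically respect cancellation, sidesteps this entirely. I would therefore prove $\pi = \pi_+\,\bar{\beta}^{-1}$ as an equality of functions on $\langle \bar{A}\rangle$ first, and only afterwards manipulate products, so that reduced-word issues never enter the computation of a), b), and c).
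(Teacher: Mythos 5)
Your proposal is correct, and it takes a genuinely different route from the paper's. The paper proves a)--c) by direct computation on words: it writes out the reduced expression for $w^{\circ(-1)}$ and the concatenations $w\circ w'$ and $(w')^{\circ(-1)}\circ w\circ w'$, applies the defining formula of $\pi$ to each, and regroups the resulting products by hand. You instead prove once that $\pi(w)=\pi_+(w)\bar{\beta}(w)^{-1}$, where $\pi_+$ and $\bar{\beta}$ are the homomorphisms of the free group $\langle\bar{A}\rangle$ determined by $\bar{a}_i\mapsto a_i\beta(a_i)$ and $\bar{a}_i\mapsto u_i$, and then all three identities follow by formal cancellation. Besides being shorter and making the analogy with Proposition~\ref{for} explicit ($\pi_+$, $\bar{\beta}$ and your factorization are the free-group avatars of $B_+$, $B$ and formula~\eqref{WordInG_B}), your route supplies a justification the paper's computation silently omits: $\pi$ is defined only through the reduced form of a word, whereas the concatenations $w\circ w'$ and $(w')^{\circ(-1)}\circ w\circ w'$ need not be reduced (e.g.\ when $i_s=j_1$), so plugging them into the defining formula requires an argument. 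Your factorization is exactly that argument, since $\pi_+$ and $\bar{\beta}$ depend only on the group element represented, hence the value of the defining formula is the same on every (reduced or not) expression of that element. What the paper's approach buys in exchange is only self-containedness: no auxiliary maps, and explicit elements of $G$ displayed at each step.

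One bookkeeping slip to repair when you write part b) in full: collapsing the right-hand side with $\pi(w)\bar{\beta}(w)=\pi_+(w)$ gives $\pi_+(w)\pi(w')\bar{\beta}(w)^{-1}$, not $\pi_+(w)\pi(w')$, and on the other side $\pi(w\circ w')=\pi_+(w)\pi_+(w')\bar{\beta}(w')^{-1}\bar{\beta}(w)^{-1}$, without the extra trailing factor $\bar{\beta}(w)$ you wrote. The two slips differ from the correct expressions by the same right factor, so your claimed agreement (and the identity) still holds, but the intermediate displays as written are not the two sides of b).
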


\begin{proof}
a) For the word
$$
w = \bar{a}_{i_1}^{\circ(k_1)} \circ \bar{a}_{i_2}^{\circ(k_2)}
 \circ \ldots \circ \bar{a}_{i_s}^{\circ(k_s)}
$$
its inverse equals
$$
w^{\circ({-1})} = \bar{a}_{i_s}^{\circ(-k_s)}
 \circ \bar{a}_{i_{s-1}}^{\circ(-{k_{s-1}})}
 \circ \ldots \circ \bar{a}_{i_1}^{\circ(-k_1)},
$$
and
$$
\pi(w^{\circ({-1})})
{=} (a_{i_s}\beta(a_{i_s}))^{-k_s}
    (a_{i_{s-1}}\beta(a_{i_{s-1}}))^{-k_{s-1}} \ldots
    (a_{i_1}\beta(a_{i_1}))^{-k_1}
    \beta(a_{i_1})^{k_1}
    \beta(a_{i_{2}})^{k_{2}} \ldots
    \beta(a_{i_s})^{k_s}.
$$
Hence,
$$
\pi(w^{\circ({-1})})
 = \beta(a_{i_s})^{-k_s} \ldots \beta(a_{i_1})^{-k_1} \pi(w)^{-1}
 \beta(a_{i_1})^{k_1} \ldots \beta(a_{i_s})^{k_s}.
$$

b) Given a word
$$
w' = \bar{a}_{j_1}^{\circ(l_1)} \circ \bar{a}_{j_2}^{\circ(l_2)}
   \circ \ldots \circ \bar{a}_{j_t}^{\circ(l_t)},
$$
we compute
$$
w \circ w'
 = \bar{a}_{i_1}^{\circ(k_1)}  \circ \ldots \circ \bar{a}_{i_s}^{\circ(k_s)}
  \circ \bar{a}_{j_1}^{\circ(l_1)}  \circ \ldots \circ \bar{a}_{j_t}^{\circ(l_t)}
$$
and
\begin{multline*}
\pi(w \circ w')
 = (a_{i_1}\beta(a_{i_1}))^{k_1} \ldots (a_{i_s}\beta(a_{i_s}))^{k_s}
 \cdot (a_{j_1}\beta(a_{j_1}))^{l_1} \ldots  (a_{j_t}\beta(a_{j_t}))^{l_t} \\
 \cdot \beta(a_{j_l})^{-l_t} \ldots \beta(a_{j_1})^{-l_1}
 \cdot \beta(a_{i_s})^{-k_s} \ldots \beta(a_{i_1})^{-k_1}.
\end{multline*}
Hence,
$$
\pi(w \circ w' )
 = \pi(w) \beta(a_{i_1})^{k_1} \ldots \beta(a_{i_s})^{k_s}
   \pi(w') \cdot \beta(a_{i_s})^{-k_s} \ldots \beta(a_{i_1})^{-k_1}
 = \pi(w) \bar{\beta}(w) \pi(w') \bar{\beta}(w)^{-1}.
$$

c) Take the product $(w')^{\circ(-1)} \circ w \circ w'$ and find
\begin{multline*}
\pi((w')^{\circ(-1)} \circ w \circ w')
 = \beta(a_{j_l})^{-l_t} \ldots \beta(a_{j_1})^{-l_1}
 ( \pi(w')^{-1} \pi(w) \beta(a_{i_1})^{k_1} \ldots \beta(a_{i_s})^{k_s} \pi(w') \\
 \cdot \beta(a_{i_s})^{-k_s} \ldots \beta(a_{i_1})^{-k_1} )
 \beta(a_{j_1})^{l_1} \ldots \beta(a_{j_t})^{l_t} \\
 = \bar{\beta}(w')^{-1} \pi(w')^{-1} \pi(w) \bar{\beta}(w)
 \pi(w') \bar{\beta}(w)^{-1} \bar{\beta}(w'). \qedhere
\end{multline*}
\end{proof}

Let $R = \{ w \in \langle \bar{A} \rangle \mid \pi(w) = e \}$.
Then from the previous lemma it follows

\begin{lemma}
The set~$R$ is a~subgroup of $\langle \bar{A} \rangle$.
Suppose that $\bar{\beta}$ satisfies the following implication
\begin{equation} \label{cond}
\pi(w) = \pi(w') \Rightarrow \bar{\beta}(w) = \bar{\beta}(w'),
\end{equation}
in particular, $\ker(\pi) \subseteq \ker(\bar{\beta})$,
then $R$~is normal in $\langle \bar{A} \rangle$.
\end{lemma}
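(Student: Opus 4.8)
The plan is to read off all three assertions directly from the identities in Lemma~\ref{l7.3}, using them to trace how the hypotheses propagate. The first claim, that $R$ is a subgroup, should require no extra hypothesis at all. The empty word satisfies $\pi(e)=e$, so $e\in R$. For closure under $\circ$-inverses I would invoke Lemma~\ref{l7.3}a, $\pi(w^{\circ(-1)})=\bar{\beta}(w)^{-1}\pi(w)^{-1}\bar{\beta}(w)$: if $w\in R$ then $\pi(w)=e$ and the right-hand side collapses to $\bar{\beta}(w)^{-1}\bar{\beta}(w)=e$, so $w^{\circ(-1)}\in R$. For closure under $\circ$ I would invoke Lemma~\ref{l7.3}b, $\pi(w\circ w')=\pi(w)\bar{\beta}(w)\pi(w')\bar{\beta}(w)^{-1}$: if $w,w'\in R$ then both $\pi$-factors are trivial and the right-hand side again reduces to $\bar{\beta}(w)\bar{\beta}(w)^{-1}=e$, so $w\circ w'\in R$. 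Thus $R\leq\langle\bar{A}\rangle$.

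For normality I would first extract the consequence $\ker(\pi)\subseteq\ker(\bar{\beta})$ from \eqref{cond}, which is the only place the hypothesis enters. Any $w\in R$ satisfies $\pi(w)=e=\pi(e)$, where $e$ is the empty word, so \eqref{cond} forces $\bar{\beta}(w)=\bar{\beta}(e)=e$. This is exactly the step that ties the two a priori unrelated maps $\pi$ and $\bar{\beta}$ together: membership in $R$ is phrased solely through $\pi$, whereas the conjugation formula below also involves $\bar{\beta}(w)$, and \eqref{cond} is precisely what guarantees this extra factor is trivial.

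With $\pi(w)=\bar{\beta}(w)=e$ in hand for $w\in R$, normality follows from the conjugation identity in Lemma~\ref{l7.3}c,
\[
\pi((w')^{\circ(-1)}\circ w\circ w')=\bar{\beta}(w')^{-1}\pi(w')^{-1}\pi(w)\bar{\beta}(w)\pi(w')\bar{\beta}(w)^{-1}\bar{\beta}(w').
\]
For arbitrary $w'\in\langle\bar{A}\rangle$, substituting $\pi(w)=e$ and $\bar{\beta}(w)=e$ reduces the right-hand side to $\bar{\beta}(w')^{-1}\pi(w')^{-1}\pi(w')\bar{\beta}(w')=e$, so $(w')^{\circ(-1)}\circ w\circ w'\in R$ and $R$ is normal.

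The step I expect to carry all the genuine content is recognizing the role of the uncancelled factor $\bar{\beta}(w)$ in Lemma~\ref{l7.3}c. In the subgroup computation the $\bar{\beta}(w)$-terms cancel against each other automatically, so that part is unconditional; but in the conjugation formula the factors $\bar{\beta}(w)$ and $\bar{\beta}(w)^{-1}$ sit on opposite sides of $\pi(w')$ and do not cancel unless they are individually trivial. This is why $R$ need not be normal in general, and why the hypothesis \eqref{cond}, through the implication $w\in R\Rightarrow\bar{\beta}(w)=e$, is exactly the condition that clears the obstruction.
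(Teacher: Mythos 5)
Your proof is correct and is precisely the argument the paper intends: the paper states this lemma without proof, remarking only that it ``follows from the previous lemma,'' and your write-up fills in exactly those details---parts a), b) of Lemma~\ref{l7.3} for the subgroup property (where the $\bar{\beta}$-factors cancel unconditionally), and part c) combined with the consequence $w\in R\Rightarrow\bar{\beta}(w)=e$ of~\eqref{cond} for normality. Your closing observation about the uncancelled conjugating factor $\bar{\beta}(w)$ being the precise obstruction that~\eqref{cond} removes is also exactly the right reading of why the hypothesis is needed.
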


\begin{lemma}\label{lem:RB-extCond}
Suppose that for some elements $w, w' \in \langle \bar{A} \rangle$
such that $\pi(w) = \pi(w')$, we have $\bar{\beta}(w) \neq \bar{\beta}(w')$,
then $\beta$~has no an extension to an RB-operator on $G$.
\end{lemma}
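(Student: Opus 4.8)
The plan is to prove the contrapositive: I would assume that $\beta$ \emph{does} extend to an RB-operator $B$ on $G$, so that $B(a_i) = u_i$ for all $i\in I$, and then derive that $\pi(w) = \pi(w')$ forces $\bar{\beta}(w) = \bar{\beta}(w')$, contradicting the hypothesis. The whole argument rests on recognizing that $\pi$ and $\bar{\beta}$ are, respectively, the underlying-set evaluation of a formal $\circ$-word inside $G_B$ and the image of that element under $B$.

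First I would introduce the evaluation map. Since $(\langle\bar{A}\rangle,\circ)$ is the free group on $\bar{A}$, there is a unique homomorphism $\phi\colon \langle\bar{A}\rangle \to G_B$ with $\phi(\bar{a}_i) = a_i$; on a reduced word $w = \bar{a}_{i_1}^{\circ(k_1)}\circ\cdots\circ\bar{a}_{i_s}^{\circ(k_s)}$ it returns $a_{i_1}^{\circ(k_1)}\circ\cdots\circ a_{i_s}^{\circ(k_s)}\in G_B$. The key identification is that, as an element of the underlying set $G = G_B$, this coincides with $\pi(w)$: this is precisely formula~\eqref{WordInG_B} of Proposition~\ref{for}, evaluated with $B(a_{i_j}) = \beta(a_{i_j})$. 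Hence $\pi = \phi$ as maps of sets into $G$.

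Next I would identify $\bar{\beta}$ with $B\circ\phi$. By Proposition~\ref{prop:Derived}c the map $B\colon (G,\circ)\to(G,\cdot)$ is a group homomorphism, so $B\circ\phi\colon \langle\bar{A}\rangle \to G$ is a homomorphism from the free group $\langle\bar{A}\rangle$ sending $\bar{a}_i \mapsto B(a_i) = u_i$. But $\bar{\beta}$ is by construction the unique homomorphism $\langle\bar{A}\rangle \to G$ with $\bar{a}_i \mapsto u_i$; by freeness, $B\circ\phi = \bar{\beta}$. Combining the two identifications yields the pivotal relation $\bar{\beta}(w) = B(\pi(w))$ for all $w\in\langle\bar{A}\rangle$, which is essentially the computation already recorded in Theorem~\ref{t}a.

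The conclusion is then immediate: if $\pi(w) = \pi(w')$, then applying the single-valued map $B$ gives $B(\pi(w)) = B(\pi(w'))$, whence $\bar{\beta}(w) = \bar{\beta}(w')$. So the existence of $w, w'$ with $\pi(w) = \pi(w')$ but $\bar{\beta}(w)\neq\bar{\beta}(w')$ rules out any RB-operator $B$ extending $\beta$. I expect the only real subtlety—rather than a genuine obstacle—to be the bookkeeping that verifies $\pi$ literally reproduces formula~\eqref{WordInG_B}; once that matching is in place, everything else is a formal use of freeness together with the homomorphism property of $B$ on $G_B$.
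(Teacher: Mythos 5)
Your proposal is correct and follows essentially the same route as the paper's proof: assume an extension $B$ exists, evaluate the $\circ$-words in $G_B$ (your map $\phi$ plays the role of the paper's elements $u$, $u'$), use Proposition~\ref{for} to identify these evaluations with $\pi(w)$, $\pi(w')$, and use the homomorphism property of $B\colon G_B\to G$ to identify $B\circ\phi$ with $\bar{\beta}$, yielding the same contradiction. Your phrasing via the universal property of the free group is just a cleaner packaging of the paper's explicit word-by-word computation.
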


\begin{proof}
Suppose that $B$~is an extension of~$\beta$ to an RB-operator~$B$ on~$G$ and
$$
w = \bar{a}_{i_1}^{\circ(k_1)} \circ \bar{a}_{i_2}^{\circ(k_2)}
 \circ \ldots \circ \bar{a}_{i_s}^{\circ(k_s)},\quad
w' = \bar{a}_{j_1}^{\circ(l_1)} \circ \bar{a}_{j_2}^{\circ(l_2)}
 \circ \ldots \circ \bar{a}_{j_t}^{\circ(l_t)}.
$$
Consider two elements in $G_B$:
$$
u = a_{i_1}^{\circ(k_1)} \circ a_{i_2}^{\circ(k_2)}
 \circ \ldots \circ a_{i_s}^{\circ(k_s)},\quad
u' = a_{j_1}^{\circ(l_1)} \circ a_{j_2}^{\circ(l_2)}
 \circ \ldots \circ a_{j_t}^{\circ(l_t)}.
$$

Then
$$
B(u) = \beta(a_{i_1})^{k_1}\ldots \beta(a_{i_s})^{k_s}
 = \bar{\beta}(w)
 \neq \bar{\beta}(w')
 = \beta(a_{j_1})^{l_1}\ldots \beta(a_{j_t})^{l_t}
 = B(u').
$$
On the other hand,
$$
B(u)
 = B(\pi(w))
 = B(\pi(w'))
 = B(u'),
$$
a contradiction.
\end{proof}

If $R = \ker(\pi)$ is a~normal subgroup of $\langle \bar{A} \rangle$,
then we can define a~group $\bar{G}_{\beta} = \langle \bar{A} \rangle / R$.
Denote by $[w]$ the element of $\bar{G}_{\beta}$ that is presented by the word~$w$.
The product in $\bar{G}_{\beta}$ is defined by the rule $[w] \circ [w']  = [w \circ w']$.
Then $\bar{G}_{\beta}$ is generated by $[a_i]$, $i\in I$, and is defined by the relations
$[w] = [w']$ for all words
$$
w = \bar{a}_{i_1}^{\circ(k_1)} \circ \bar{a}_{i_2}^{\circ(k_2)}
 \circ \ldots \circ \bar{a}_{i_s}^{\circ(k_s)},\quad
w' = \bar{a}_{j_1}^{\circ(l_1)} \circ \bar{a}_{j_2}^{\circ(l_2)}
 \circ \ldots \circ \bar{a}_{j_t}^{\circ(l_t)}
$$
such that $\pi(w) = \pi(w')$, i.\,e.,
 the following relation holds in  $G$:
\begin{multline*}
(a_{i_1}\beta(a_{i_1}))^{k_1}
    (a_{i_2}\beta(a_{i_2}))^{k_2} \ldots
    (a_{i_s}\beta(a_{i_s}))^{k_s}
    \beta(a_{i_s})^{-k_s}
    \beta(a_{i_{s-1}})^{-k_{s-1}} \ldots
    \beta(a_{i_1})^{-k_1} \\
 =  (a_{j_1}\beta(a_{j_1}))^{l_1}
    (a_{j_2}\beta(a_{j_2}))^{l_2} \ldots
    (a_{j_t}\beta(a_{j_t}))^{l_t}
    \beta(a_{j_l})^{-l_t}
    \beta(a_{j_{t-1}})^{-l_{t-1}} \ldots
    \beta(a_{j_1})^{-l_1}.
\end{multline*}

If condition~\eqref{cond} is satisfied, then the homomorphism
$\bar{\beta} \colon \langle \bar{A} \rangle \to \langle U \rangle \leq G$
induces the homomorphism
$\bar{G}_{\beta} \to \langle U \rangle$, which we denote by the same symbol~$\bar{\beta}$.
Also, the map $\pi \colon \langle \bar{A} \rangle \to G$ induces the map
$\bar{G}_{\beta} \to G$, which we denote by $\bar{\pi}$.
It is easy to see that $\Imm(\pi) = \Imm(\bar{\pi})$.

In the following example we apply Lemma~\ref{lem:RB-extCond}.

\begin{example}
Let
$$
G = \big\langle s_1, s_2 \mid s_1^2 = s_2^2 = e,\,
 s_1 s_2 s_1 = s_2 s_1 s_2 \big\rangle
$$
be the symmetric group $S_3$ on 3~symbols.

a) If we take the map $\beta \colon \{ s_1, s_2 \} \to \{ s_1, s_2 \}$
such that $s_1 \mapsto s_1$, $s_2 \mapsto s_2$, then it has an extension to the RB-operator
$B_{-1}(g) = g^{-1}$ on $S_3$. Let us find the group $\overline{G}_{\beta}$.
Let $\langle \bar{A} \rangle$ be a~free group with basis $\bar{A} = \{ t_1, t_2 \}$.
We have a~map $\pi \colon \langle \bar{A} \rangle \to G$ that is defined by the formula
$$
\pi \big(t_1^{\circ(k_1)} \circ t_2^{\circ(l_1)} \circ
 \ldots \circ t_1^{\circ(k_s)} \circ t_2^{\circ(l_s)}\big)
 = s_2^{-l_s} s_1^{-k_s} \ldots s_2^{-l_1} s_1^{-k_1}.
$$
In this case $R = \{ w \in \langle \bar{A} \rangle \mid \pi (w) = e \}$
is normal in $\langle \bar{A} \rangle$ and the product
in $\overline{G}_{\beta}$ is opposite to the product in~$G$.

b) Note that the group $S_3$ can be also generated by the elements $\tau_1 = s_1$,
$\tau_2 = s_2$, and $\tau_3 = s_1 s_2 s_1$. Put
$$
\beta(\tau_1) = \tau_1,\quad
\beta(\tau_2) = \tau_2,\quad
\beta(\tau_3) = \tau_2 \tau_1
$$
and take $\langle \bar{A} \rangle$ be a~free group with basis $\bar{A} =\{t_1,t_2,t_3\}$.
Then
\begin{gather*}
\pi(t_1 \circ t_1) = \pi(t_2 \circ t_2) = \pi(t_3 \circ t_1) = e, \quad
\pi(t_1 \circ t_2 \circ t_1) = \pi(t_2 \circ t_1 \circ t_2) = \tau_1 \tau_2 \tau_1, \\
\pi(t_1 \circ t_2) = \pi(t_2 \circ t_3) = \pi(t_3 \circ t_3) =  \tau_2 \tau_1, \quad
\pi(t_2 \circ t_1) = \pi(t_3 \circ t_2) = \pi(t_1 \circ t_3) = \tau_1 \tau_2.
\end{gather*}
Hence, $\pi(\langle \bar{A} \rangle)  = S_3$.

On the other hand,
$$
e = \bar{\beta}(t_1 \circ t_1)  \not= \bar{\beta}(t_3 \circ t_1) = t_2.
$$
It means that the condition of Lemma~\ref{lem:RB-extCond}
does not hold and we can not extend the map~$\beta$ to a~Rota---Baxter operator on $S_3$.
\end{example}

Now we can formulate the main result of the present section.

\begin{theorem} \label{thm:RBextension}
Let $G = \langle A \rangle$ be a~group generated by a~set $A = \{ a_i \mid i \in I \}$.
Suppose that $U = \{u_i\}_{i \in I}$ is a subset of~$G$ with the same cardinality as~$A$ and
$\beta \colon A \to U$, $\beta(a_i) = u_i$, $i \in I$.
If condition~\eqref{cond} is satisfied and the map
$\bar{\pi} \colon \bar{G}_{\beta} \to G$ is bijective, then
$\beta$~can be extended to an~RB-operator on~$G$ and $G_B$ is isomorphic to $\bar{G}_{\beta}$.
\end{theorem}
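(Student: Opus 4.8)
The plan is to transport the operator across the bijection $\bar\pi\colon\bar G_{\beta}\to G$, reading Lemma~\ref{l7.3}b as the precise group-theoretic shadow of the Rota--Baxter identity~\eqref{RB}.

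First I would \emph{define} the candidate operator. Because condition~\eqref{cond} makes $\bar\beta$ constant on the fibres of $\pi$, it descends to a well-defined homomorphism $\bar\beta\colon\bar G_{\beta}\to\langle U\rangle$; and because $\bar\pi$ is bijective, each $g\in G$ equals $\pi(w)$ for a word $w$ whose class $[w]\in\bar G_{\beta}$ is uniquely determined by $g$. I therefore set $B(g):=\bar\beta(w)$ where $g=\pi(w)$. Surjectivity of $\bar\pi$ guarantees $B$ is defined on all of $G$, while injectivity together with~\eqref{cond} guarantees the value does not depend on the chosen representative. That $B$ extends $\beta$ is immediate from $\pi(\bar a_i)=(a_i\beta(a_i))\beta(a_i)^{-1}=a_i$, which gives $B(a_i)=\bar\beta(\bar a_i)=u_i$.

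Next I would \emph{verify the Rota--Baxter identity}. Fix $g,h\in G$ and write $g=\pi(w)$, $h=\pi(w')$. Since $\bar\beta$ is multiplicative on $\bar G_{\beta}$, the left-hand side of~\eqref{RB} is $B(g)B(h)=\bar\beta(w)\bar\beta(w')=\bar\beta(w\circ w')$. For the right-hand side, Lemma~\ref{l7.3}b yields $gB(g)hB(g)^{-1}=\pi(w)\bar\beta(w)\pi(w')\bar\beta(w)^{-1}=\pi(w\circ w')$, and hence $B(gB(g)hB(g)^{-1})=B(\pi(w\circ w'))=\bar\beta(w\circ w')$ by the definition of $B$. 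The two sides agree, so $B$ is an RB-operator on $G$ extending $\beta$.

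Finally, for the assertion $G_B\cong\bar G_{\beta}$, I would check that the bijection $\bar\pi$ is a group homomorphism onto $G_B$. Using $[w]\circ[w']=[w\circ w']$ in $\bar G_{\beta}$ and Lemma~\ref{l7.3}b once more,
\[
\bar\pi([w]\circ[w'])=\pi(w\circ w')=\pi(w)B(\pi(w))\pi(w')B(\pi(w))^{-1},
\]
which is exactly the $G_B$-product of $\bar\pi([w])$ and $\bar\pi([w'])$; thus $\bar\pi$ is a bijective homomorphism, i.e.\ an isomorphism. The computations themselves are routine given Lemma~\ref{l7.3}; the only point demanding care is the bookkeeping in the definition of $B$, where \emph{both} halves of the bijectivity hypothesis are needed --- surjectivity to define $B$ everywhere and injectivity (with~\eqref{cond}) to make it single-valued.
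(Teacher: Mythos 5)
Your proposal is correct and takes essentially the same route as the paper: define $B$ by $B(\pi(w)) = \bar{\beta}(w)$, invoke condition~\eqref{cond} (together with surjectivity of $\bar{\pi}$) for well-definedness, and verify the identity~\eqref{RB} by combining multiplicativity of $\bar{\beta}$ with Lemma~\ref{l7.3}b. In fact you go slightly beyond the paper's own write-up, which stops after establishing that $B$ is an RB-operator: your final step checking that $\bar{\pi}\colon \bar{G}_{\beta}\to G_B$ is a bijective homomorphism supplies the isomorphism $G_B \cong \bar{G}_{\beta}$ that the theorem asserts but the paper's proof leaves implicit.
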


\begin{proof}
Construct the RB-operator~$B$ on~$G$. Let $g \in G$.
Since $\bar{\pi}$ is invertible on~$G$,
there exists a~unique element
$\bar{g} \in \bar{G}_{\beta}$ such that $\bar{\pi}(\bar{g}) = g$.
Suppose that $\bar{g} = [w]$, where
$$
w = \bar{a}_{i_1}^{\circ(k_1)} \circ \bar{a}_{i_2}^{\circ(k_2)}
 \circ \ldots \circ \bar{a}_{i_s}^{\circ(k_s)}
$$
is a~word in the free group $\langle \bar{A} \rangle$. Then we put
$$
B(g)
 = B(\bar{\pi}(\bar{g}))
 = B(\pi(w))
 = \bar{\beta}(w)
 =  u_{i_1}^{k_1} u_{i_2}^{k_2} \ldots  u_{i_s}^{k_s}.
$$
By condition~\eqref{cond} this map is defined on $[w]$ and
does not depend on the choice of~$w$.

Let us prove that~$B$ is an~RB-operator on~$G$.
For $g' \in G$ there exists a~word
$$
w' = \bar{a}_{j_1}^{\circ(l_1)} \circ \bar{a}_{j_2}^{\circ(l_2)}
 \circ \ldots \circ \bar{a}_{j_t}^{\circ(l_t)}
$$
such that $\pi(w') = \bar{\pi}([w']) = g'$. Then
$B(g') = u_{j_1}^{l_1} u_{j_2}^{l_2} \ldots  u_{j_t}^{l_t}$.

On the other hand, by Lemma \ref{l7.3}b
$$
\pi(w \circ w') = \pi(w) \bar{\beta}(w) \pi(w') \bar{\beta}(w)^{-1}
$$
and we have
\begin{multline*}
B( gB(g)g'B(g)^{-1})
 = B(\pi(w \circ w') )
 = \bar{\beta}(w \circ w')
 = \bar{\beta}(w) \bar{\beta}(w') \\
 = u_{i_1}^{k_1} u_{i_2}^{k_2} \ldots  u_{i_s}^{k_s}
 \cdot u_{j_1}^{l_1} u_{j_2}^{l_2} \ldots  u_{j_t}^{l_t}
 = B(g)B(g').
\end{multline*}
Hence, $B$ is indeed an RB-operator on $G$.
\end{proof}

The next proposition shows that the condition~\eqref{cond}
is not sufficient for existence of an~RB-operator extending given map~$\beta$.

\begin{proposition}
Consider the symmetric group $S_3$. Then

a) the map $\beta \colon s_1 \mapsto s_1$, $s_2 \mapsto e$ defines the group
$\bar{G}_{\beta}$  and this group is isomorphic to $\mathbb{Z}_2\times \mathbb{Z}_2$.

b) There is no an~RB-operator on $S_3$ extending~$\beta$.
\end{proposition}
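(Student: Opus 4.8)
The plan is to make the map $\pi\colon\langle\bar A\rangle\to S_3$ completely explicit for this particular $\beta$ and then read both assertions off it. Here $A=\{s_1,s_2\}$ with $u_1=\beta(s_1)=s_1$ and $u_2=\beta(s_2)=e$, so the two ingredients of $\pi$ simplify drastically: $a_1\beta(a_1)=s_1^2=e$ and $\beta(a_2)=e$, while $a_2\beta(a_2)=s_2$ and $\beta(a_1)=s_1$. First I would substitute these into the defining formula for $\pi$. For a reduced word $w=\bar a_{i_1}^{\circ(k_1)}\circ\cdots\circ\bar a_{i_s}^{\circ(k_s)}$ the leading product $\prod_j(a_{i_j}\beta(a_{i_j}))^{k_j}$ retains only the factors with $i_j=2$, each a power of $s_2$, and the trailing product $\prod_j\beta(a_{i_j})^{-k_j}$ retains only the factors with $i_j=1$, each a power of $s_1$. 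Since powers of a single transposition commute, both products collapse and I obtain the closed form $\pi(w)=s_2^{K_2}s_1^{K_1}$, exponents read modulo $2$, where $K_1=\sum_{i_j=1}k_j$ and $K_2=\sum_{i_j=2}k_j$.

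From this formula everything in~a) follows. The image $\Imm(\pi)=\{e,s_1,s_2,s_2s_1\}$ consists of exactly four distinct elements, and $\pi(w)=\pi(w')$ forces $K_1\equiv K_1'$ and $K_2\equiv K_2'\pmod 2$; since $\bar\beta(w)=s_1^{K_1}$, this already gives $\bar\beta(w)=\bar\beta(w')$, so condition~\eqref{cond} holds and $\bar{G}_{\beta}=\langle\bar A\rangle/\ker(\pi)$ is a well-defined group. To pin down its order I would show that $[w]=[w']$ in $\bar{G}_{\beta}$ is equivalent to $\pi(w)=\pi(w')$: combining Lemma~\ref{l7.3}a and b gives $\pi(w^{\circ(-1)}\circ w')=\bar\beta(w)^{-1}\pi(w)^{-1}\pi(w')\bar\beta(w)$, which equals $e$ exactly when $\pi(w)=\pi(w')$. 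Hence $\bar\pi\colon\bar{G}_{\beta}\to S_3$ is injective with image of size $4$, so $|\bar{G}_{\beta}|=4$. Finally $[s_1]$ and $[s_2]$ are commuting involutions, since $\pi(\bar a_1^{\circ(2)})=\pi(\bar a_2^{\circ(2)})=e$ and $\pi(\bar a_1\circ\bar a_2)=\pi(\bar a_2\circ\bar a_1)=s_2s_1$; thus $\bar{G}_{\beta}$ is abelian of order $4$ and exponent $2$, i.e.\ $\bar{G}_{\beta}\cong\mathbb{Z}_2\times\mathbb{Z}_2$, proving~a).

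For~b) I would argue by contradiction using Lagrange's theorem, which is the genuine point here: condition~\eqref{cond} does hold, so neither Lemma~\ref{lem:RB-extCond} nor the mere failure of the hypotheses of Theorem~\ref{thm:RBextension} rules out an extension directly. Suppose $B$ extends $\beta$ to an RB-operator on $S_3$. Then $G_B=(S_3,\circ)$ is a group of order $6$, and $H=\langle A\rangle_B$, the $\circ$-subgroup generated by $s_1,s_2$, is one of its subgroups. By Proposition~\ref{for} (equivalently Theorem~\ref{t}a) each element of $H$, evaluated in $S_3$, equals $\pi(w)$ for the corresponding word $w$, so the underlying set of $H$ is precisely $\Imm(\pi)=\{e,s_1,s_2,s_2s_1\}$, of order $4$. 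Thus $G_B$ would contain a subgroup of order $4$, contradicting $4\nmid 6$. Hence no RB-operator on $S_3$ extends $\beta$.

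I expect the main obstacle to be conceptual rather than computational: recognizing that the real obstruction is arithmetic. The construction yields a perfectly good group $\bar{G}_{\beta}\cong\mathbb{Z}_2\times\mathbb{Z}_2$ and condition~\eqref{cond} is satisfied, so the necessary condition of Lemma~\ref{lem:RB-extCond} is vacuous here; what actually forbids the extension is that $\bar\pi$ is injective but not surjective, forcing $\langle A\rangle_B$ to be a subgroup of $G_B$ whose order $4$ does not divide $|G_B|=6$. The one step requiring care is the collapse of $\pi(w)$ to $s_2^{K_2}s_1^{K_1}$, where I must verify that the two surviving products really do reduce even though $S_3$ is non-abelian; this works precisely because each of the two products involves powers of a single transposition.
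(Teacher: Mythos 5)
Your proof is correct, and part b) takes a genuinely different route from the paper's. For part a) you and the paper do essentially the same thing, except that you are more thorough: the paper only evaluates $\pi$ and $\bar{\beta}$ on the length-two words and then writes down the presentation of $\bar{G}_{\beta}$, whereas you derive the closed form $\pi(w)=s_2^{K_2}s_1^{K_1}$, verify condition~\eqref{cond} for \emph{all} pairs of words, and prove via Lemma~\ref{l7.3} that $\bar{\pi}$ is injective, which is what actually justifies $|\bar{G}_{\beta}|=4$ rather than a proper quotient of $\mathbb{Z}_2\times\mathbb{Z}_2$; this fills a small gap in the paper's exposition. For part b) the paper argues by a short direct computation: since $B(s_2)=e$, Lemma~\ref{kernelCosets} gives $B(s_1s_2)=B(s_2s_1s_2)=B(s_1s_2s_1)$, and then the Rota---Baxter identity~\eqref{RB} with $g=h'=s_1$ yields $s_1B(s_1s_2)=B(s_1\cdot s_1\cdot s_1s_2\cdot s_1)=B(s_1s_2s_1)=B(s_1s_2)$, forcing $s_1=e$, a contradiction. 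You instead invoke Lagrange: by Proposition~\ref{for}, the $\circ$-subgroup $\langle\{s_1,s_2\}\rangle_B$ of $G_B$ would have underlying set exactly $\Imm(\pi)=\{e,s_1,s_2,s_2s_1\}$ of order $4$, which cannot divide $|G_B|=6$. Both arguments are valid. The paper's is shorter and needs only two elementary lemmas; yours is more conceptual and arguably more informative: it ties part b) directly to the computation in part a), explains structurally why \eqref{cond} alone is insufficient (here $\bar{\pi}$ is injective but not surjective), and generalizes to a clean numerical necessary condition --- if $|\Imm(\pi)|$ does not divide $|G|$ for a finite group $G$, then $\beta$ admits no extension to an RB-operator.
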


\begin{proof}
a) Take a free group $\langle t_1, t_2 \rangle$. Then
we can find
$$
\pi(t_1 \circ t_1) = \pi(t_2\circ t_2) = e,\quad
\pi(t_1 \circ t_2) = \pi(t_2\circ t_1) = s_2 s_1.
$$
From equalities
$$
\bar{\beta}(t_1 \circ t_1) = \bar{\beta}(t_2\circ t_2) = e,\quad
\bar{\beta}(t_1 \circ t_2) = \bar{\beta}(t_2\circ t_1) =  s_1
$$
it follows that \eqref{cond} holds and we can define
$$
\bar{G}_{\beta}  = \langle [t_1], [t_2] \mid [t_1] \circ [t_1] = [t_2] \circ [t_2] = e,\,
[t_1] \circ [t_2] = [t_2]\circ [t_1] \rangle \cong \mathbb{Z}_2 \times \mathbb{Z}_2.
$$
Hence, the map~$\beta$ does not define a group operation on the set $S_3$.

b) Suppose that such RB-operator $B$ exists.
Since $B(s_2) = e$, then by Lemma \ref{kernelCosets}
$$
B(s_1 s_2) = B(s_2s_1s_2) = B(s_1 s_2 s_1).
$$
We arrive at a contradiction by the following equalities,
$$
s_1 B(s_1s_2)
 = B(s_1)B(s_1s_2)
 = B(s_1\cdot s_1\cdot s_1s_2\cdot s_1)
 = B(s_1 s_2 s_1)
 = B(s_1s_2). \qedhere
$$
\end{proof}

Now we consider RB-operators on free non-abelian groups.
In this case we do not need to define groups
$\langle \bar{A} \rangle$ and $\bar{G}_{\beta}$.
We can define a~new operation $\circ$ directly on the free group.

The following proposition shows that in some cases
we can reconstruct RB-operators by its values on a basis of a~free group.

\begin{proposition}
Let $F$ be a free group with a basis $A = \{a_i\}_{i \in I}$. Then

a) the map $\beta \colon a_i \mapsto e$ for all $i \in I$
has the unique extension to an RB-operator.
This operator is the trivial one, $B_0(g) = e$, $g \in F$.

b) the map $\beta \colon a_i \mapsto a_i^{-1}$ for all $i \in I$
has the unique extension to an RB-operator.
This operator is $B_{-1}(g) = g^{-1}$, $g \in F$.
\end{proposition}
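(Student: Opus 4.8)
The plan is to treat existence as immediate and concentrate all the work on uniqueness. Existence is trivial: $B_0$ and $B_{-1}$ are RB-operators, and $B_0(a_i) = e = \beta(a_i)$ in part a), while $B_{-1}(a_i) = a_i^{-1} = \beta(a_i)$ in part b). So in each case the asserted operator does extend $\beta$, and it remains only to show it is the \emph{unique} such extension. The key idea is that, for \emph{any} RB-operator $B$ extending $\beta$, the free basis $A$ already generates the whole induced group $G_B = (F,\circ)$; once this is known, Theorem~\ref{t}b forces the values of $B$ everywhere to be determined by its values on $A$, hence $B$ must coincide with the explicit operator we already have. Thus I set $S = \langle A\rangle_B \leq (F,\circ)$ and reduce everything to proving $S = F$ as a subset of $F$.

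For part a), I would exploit that $B\colon G_B \to G$ is a homomorphism (Proposition~\ref{prop:Derived}c), so by Theorem~\ref{t}a the image $B(S)$ is generated by the elements $B(a_i) = \beta(a_i) = e$, i.e.\ $B(s) = e$ for every $s \in S$. Feeding this into the definition of $\circ$ gives, for $s,t \in S$, the collapse $s\circ t = sB(s)tB(s)^{-1} = st$ and $s^{\circ(-1)} = B(s)^{-1}s^{-1}B(s) = s^{-1}$ (Proposition~\ref{for}). Therefore $S$ is closed under the ordinary product and ordinary inversion of $F$, so it is an honest subgroup of $(F,\cdot)$ containing all $a_i$; since $A$ is a free basis, $S = F$. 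Because $B$ vanishes on $S = F$, we conclude $B = B_0$.

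For part b) I would run the parallel argument through $B_+$ instead of $B$. Since $B_+\colon G_B \to G$ is a homomorphism and $B_+(a_i) = a_iB(a_i) = a_i a_i^{-1} = e$, the same generation principle yields $B_+(s) = e$, that is $B(s) = s^{-1}$, for all $s \in S$. Substituting this into the product gives $s\circ t = s\,s^{-1}t\,s = ts$ and $s^{\circ(-1)} = s^{-1}$ for $s,t \in S$, so again $S$ is closed under the ordinary operations of $F$ and hence equals $F$. Consequently $B(g) = g^{-1} = B_{-1}(g)$ for all $g \in F$, proving uniqueness in part b).

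The only genuine obstacle is the identity $S = F$, and the subtle point inside it is that a priori the operation $\circ$ on $F$ depends on the values of $B$ on \emph{all} of $F$, not just on $A$; so it is not obvious that $\circ$-products of basis elements fill out $F$. What resolves this is precisely that the homomorphism property of $B$ (resp.\ $B_+$), combined with the fact that $A$ generates $S$ under $\circ$, pins down $B$ (resp.\ $B_+$) on $S$ \emph{without} reference to its behaviour elsewhere, collapsing $\circ$ on $S$ to $\cdot$ (resp.\ to the opposite product). After that collapse, the conclusion $S = F$ is routine. I do not expect to need the auxiliary groups $\langle \bar A\rangle$ and $\bar G_{\beta}$ at all, since the direct computation on $(F,\circ)$ is cleaner in the free case.
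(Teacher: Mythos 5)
Your proof is correct and takes essentially the same route as the paper's: for any extension $B$, the values on the basis force $B$ on the $\circ$-subgroup generated by $A$ (via the homomorphism $G_B\to G$, i.e.\ Proposition~\ref{for}/Theorem~\ref{t}), and the induced product collapses there to the ordinary product (resp.\ its opposite), so that subgroup is all of $F$ and $B$ must be $B_0$ (resp.\ $B_{-1}$). Your only departures are cosmetic --- a subgroup-closure argument in place of the paper's explicit word-collapse formula~\eqref{WordInG_B}, and routing part~b) through $B_+$ (whose vanishing on the generators kills it on the whole $\circ$-subgroup) instead of redoing the computation with $B(a_i)=a_i^{-1}$, which is a tidy economy but the same underlying idea.
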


\begin{proof}
a) Suppose that $B$ is an extension of $\beta$ to an RB-operator on $F$.
Below we use the notation of the operation~$\circ$ in sense of~\eqref{R-product}.
In this case $a_i^{\circ (k)} = a_i^k$, and hence $B(a_i^{\circ (k)}) = e$
for all $i \in I$, $k \in \mathbb{Z}$. If
$$
w = a_{i_1}^{\circ(k_1)} \circ a_{i_2}^{\circ(k_2)} \circ \ldots
 \circ a_{i_s}^{\circ(k_s)},
$$
where $a_{i_j} \in A$, $k_{i_j} \in \mathbb{Z}$, is an arbitrary word, then
$$
B(w)
 = B\big(a_{i_1}^{\circ(k_1)}\big)
   B\big(a_{i_2}^{\circ(k_2)}\big)
   \ldots B\big(a_{i_s}^{\circ(k_s)}\big)
 = e.
$$
Let us show that in this case $G_B$ is also generated by~$A$.
It follows from the equality
\begin{multline*}
w = a_{i_1}^{\circ(k_1)} \circ a_{i_2}^{\circ(k_2)}
  \circ \ldots \circ a_{i_s}^{\circ(k_s)} \\
  = (a_{i_1} B(a_{i_1}))^{k_1} (a_{i_2} B(a_{i_2}))^{k_2}
 \ldots (a_{i_s} B(a_{i_s}))^{k_s}
 B(a_{i_s})^{-k_s} B(a_{i_{s-1}})^{-k_{s-1}} \ldots  B(a_{i_1})^{-k_1} \\
  = a_{i_1}^{k_1} a_{i_2}^{k_2} \ldots  a_{i_s}^{k_s}.
\end{multline*}

b) The proof is similar.
\end{proof}

\begin{question}
Find Rota---Baxter operators on free non-abelian groups.
In particular, is it true that any map~$\beta$
defined on a~basis of a~free group can
be extended to an RB-operator on this group?
\end{question}

The following example shows that the values of~$\beta$
on the generators of $F_n$ do not define values of~$\beta$ on all elements of $F_n$.

\begin{example} \label{e7.8}
Let $F_2 = \langle a, b \rangle$ be the free group of rank 2.
Put $\beta(a) = a$, $\beta(b) = e$.
By Proposition \ref{fr}, there exists an RB-operator on $F_2$
which is extension of the map~$\beta$.
This operator is the homomorphism $B \colon F_2 \to \langle a \rangle$
that is defined on the generators as follows, $B(a) = \beta(a) = a$,
$B(b) = \beta(b) = e$.
Let us denote the product in $F_2$ as $\cdot$ and the product in $(F_2)_B$ as $\circ$.
We try do define the operation~$\circ$ on~$F_2$ applying only the map~$\beta$.
We have
\begin{gather*}
a^{\circ(k)} = a^k,\quad \beta(a^{\circ(k)}) = \beta(a)^k = a^k,\quad k \in \mathbb{Z}, \\
b^{\circ(l)} = b^l,\quad \beta(b^{\circ(l)}) = \beta(b)^l = e,  \quad l \in \mathbb{Z}.
\end{gather*}
For words of syllable length two we have
$$
a^{\circ(k)} \circ b^{\circ(l)} = a^{2k} b^l a^{-k},\quad
b^{\circ(l)} \circ a^{\circ(k)}=  b^l a^{k},\quad k,l \in \mathbb{Z}.
$$
By Proposition~\ref{for} for word of an arbitrary length we have
$$
w = a^{\circ(k_1)} \circ b^{\circ(l_1)} \circ a^{\circ(k_2)} \circ b^{\circ(l_2)}
  \circ \ldots \circ a^{\circ(k_s)} \circ b^{\circ(l_s)}
  = a^{2k_1} b^{l_1} a^{2k_2} b^{l_2}\ldots a^{2k_s} b^{l_s} a^{-\sum k_i}.
$$
We can find
$$
\beta(w) = a^{\sum k_i}.
$$
Hence, we can construct extension of~$\beta$ to the
subgroup~$S = \langle \{a,b\}, \circ \rangle$
generated by~$a$ and~$b$ in $F_B$.
So, $\ker(\beta) = \{ w\mid \sum k_i = 0 \}$
is the normal closure of~$b$ in $S$.

Hence, $S$~is not equal to $F_2$. In particular, $ab$ does not lie in~$S$.

Let us add a new generator $ab$ to the set $\{a, b\}$ and put
$$
\beta_1(a) = \beta_1(ab) = a,\quad
\beta_1(b) = e.
$$
Then in the group $T = \langle \{a, b, ab\},\circ \rangle$ we have by Proposition~\ref{for}
$$
(ab)^{\circ (m)} = (aba)^m a^{-m}, \quad m\in \mathbb{Z}.
$$
In particular, $ab,a^{-1} b^{-1}\in T$, and the group~$T$ consists of elements
\begin{multline*}
 a^{\circ(k_1)} \circ b^{\circ(l_1)} \circ (ab)^{\circ(m_1)}
 \circ a^{\circ(k_2)} \circ b^{\circ(l_2)} \circ  (ab)^{\circ(m_2)} \circ \ldots
  \circ a^{\circ(k_s)} \circ b^{\circ(l_s)} \circ (ab)^{\circ(m_s)} \\
  = a^{2k_1}  b^{l_1} (aba)^{m_1} a^{2k_2} b^{l_2} (aba)^{m_2}
  \ldots  a^{2k_s}  b^{l_s} (aba)^{m_s} a^{-\sum (k_i+m_1)}
\end{multline*}
for integers $k_i, l_i, m_i$. It is easy to check that $T\neq F_2$.
\end{example}

\begin{question}
Is it true that the group $(F_2)_B$ from this example is not finitely generated?
\end{question}

In~Example~\ref{e7.8} we have defined~$\beta$ on the group
$\langle \{a, b\}, \circ \rangle$ that is a subset of $F_2$.
We have seen that $\beta$~has a~non-trivial kernel
and its image is not equal to $F_2$.

\begin{question}
Is there an RB-group $(G, B)$ such that $B$ is a surjective map with a~non-trivial kernel?
\end{question}

\section{Extensions of RB-groups}

If $(G, B)$ is an RB-group, then an RB-group $(H, B_H)$
is called an RB-subgroup of $(G, B)$, if $H$ is a subgroup of $G$
and $B_H$ is the restriction of $B$ to $H$.
We can define a~short exact sequence of RB-groups
\begin{equation} \label{ses}
1 \to (H, B_H) \to (G, B) \to (L, B_L) \to 1,
\end{equation}
where $(H, B_H)$ is an RB-subgroup of $(G,B)$ and there is an epimorphism of RB-groups
$(G, B) \to (L, B_L)$ such that $B_L$ is induced by $B$.
We say that $(G, B)$ is an extension of $(H, B_H)$ by $(L, B_L)$.

As in the case of groups we can formulate
\begin{question}
Let $(H, B_H)$ and $(L, H_L)$ be two RB-groups.
Find all RB-groups $(G, B)$ for which
there exists the short exact sequence~(\ref{ses})?
\end{question}

By Proposition~\ref{direct}, such RB-groups $(G,B)$ exist.

Recall the construction of wreath product (see, for example,~\cite{KM}).
Let $H$ and $L$ be groups, $\Fun(L, H)$ and $\fun(L, H)$
are Cartesian and direct sums of copies $H$ indexed by elements of $L$.
It means that $\Fun(L, H)$ is the group of all functions $L \to H$
and $\fun(L, H)$ is its subgroup of functions with a finite support.
For $f \in \Fun(L, H)$, $l \in L$, the function $f^l$ is defined by the rule
$f^l = f(l x)$, $x \in L$. The map
$$
\hat{l} \colon \Fun(L, H) \to \Fun(L, H),\quad f \mapsto f^l,
$$
is an automorphism of $\Fun(L, H)$, which sends $\fun(L, H)$ to itself, and the maps
$$
L \to \Aut( \Fun(L, H) ),\quad L \to \Aut( \fun(L, H) ),\quad l \mapsto \hat{l},
$$
are isomorphic embeddings. The Cartesian wreath product
$H \bar{\wr} L = L  \cdot \Fun(L, H)$
is a~group with multiplication
$$
l f \cdot l' f' = l l'\cdot f^{l'} f'.
$$
The direct wreath product is the group
$H \wr L = L \cdot \fun(L, H)$.

It is natural to formulate

\begin{question}
Let $G = H \wr L$ be a direct wreath product or $G = H \bar{\wr} L$
be a Cartesian wreath product of $H$ and $L$. What RB-operators can be defined on $G$?
\end{question}

Particular answer on this question gives

\begin{proposition}
Let $G$ be a Cartesian or direct wreath product of groups $H$ and $L$. Then

a) the map $B(lf) = f^{-1}$ defines an RB-operator on $G$;

b) if $L$ is abelian and $\varphi$ is any endomorphism of $L$,
then the map $B(l f) = l^{\varphi}$ defines an RB-operator on $G$;

c) if action of $L$ on $\Fun(L, H)$ (correspondingly, on $\fun(L, H)$)
is trivial and $B_L$ and $B_H$ are RB-operators on $L$ and on $\Fun(L, H)$
(on $\fun(L, H)$ respectively), then the map $B(l f) = B_L(l) B_H(f)$
defines an RB-operator on~$G$.
\end{proposition}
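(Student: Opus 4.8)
The plan is to reduce all three parts to constructions already established for exact factorizations, semidirect products, and direct products, after recording the (semidirect) product structure of the wreath product. Recall that $G = L\cdot\Fun(L,H)$ (respectively $G = L\cdot\fun(L,H)$), where the base group $\Fun(L,H)$ is normal and $L$ acts by the shift $f\mapsto f^l$; thus $G = \Fun(L,H)\rtimes L$. Since every element is written uniquely as $lf$ with $l\in L$, $f\in\Fun(L,H)$, and $L\cap\Fun(L,H) = \{e\}$, we have an exact factorization $G = L\,\Fun(L,H)$. All three of Example~\ref{exm:split}, the semidirect-product proposition, and Proposition~\ref{direct} are stated for arbitrary groups, so each applies verbatim to both the Cartesian and the direct variant.

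For part a) I would simply observe that $B(lf) = f^{-1}$ is the splitting Rota---Baxter operator of Example~\ref{exm:split} attached to the exact factorization $G = L\,\Fun(L,H)$: matching the notation $B(hl) = l^{-1}$ of that example with the roles $h\leftrightarrow l$ and $l\leftrightarrow f$ gives exactly $B(lf) = f^{-1}$, so a) is immediate.

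For part b), note first that $B(lf) = l^{\varphi} = \varphi(l)$ depends only on the $L$-component. Using the action law $(f^a)^b = f^{ab}$ one checks $lf = (f^{l^{-1}})\,l$, the normal form $h'l$ with $h' = f^{l^{-1}}\in\Fun(L,H)$ required by the semidirect-product proposition for $G = \Fun(L,H)\rtimes L$; since the $L$-coordinate of $lf$ is $l$, the map becomes $B(h'l) = \varphi(l)$, i.e.\ $B(h'l) = C(l)$ with $C = \varphi$. As $L$ is abelian, the endomorphism $\varphi\colon L\to L$ is a homomorphism into an abelian group and hence a Rota---Baxter operator on $L$ by Proposition~\ref{prop:RB-Hom}b. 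Thus $B$ is exactly the operator produced by the semidirect-product construction, and b) follows. For part c), triviality of the $L$-action on $\Fun(L,H)$ collapses the multiplication $lf\cdot l'f' = ll'\,f^{l'}f'$ to $ll'\,ff'$, so $G = L\times\Fun(L,H)$ is a direct product; then $B(lf) = B_L(l)B_H(f)$ is precisely the operator of Proposition~\ref{direct}a built from $B_L$ on $L$ and $B_H$ on $\Fun(L,H)$, which settles c).

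The only genuinely delicate point, and the one I would be most careful about, is the bookkeeping caused by the non-commutative law $lf\cdot l'f' = ll'\,f^{l'}f'$: one must convert between the wreath-product normal form $lf$ (with $L$ on the left) and the $hl$-form used by Example~\ref{exm:split} and by the semidirect-product proposition (with the normal factor on the left), and then verify that after this conversion the defining formula of $B$ depends on the coordinates exactly as those earlier results require---most importantly, in b), that $B$ remains a function of the $L$-part alone. Once the two normal forms are reconciled, the three statements are direct corollaries and no fresh computation with the Rota---Baxter identity~\eqref{RB} is needed.
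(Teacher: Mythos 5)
Your proposal is correct, and for parts a) and c) it coincides with the paper's proof: a) is Example~\ref{exm:split} applied to the exact factorization $G = L\cdot\Fun(L,H)$ (respectively $G = L\cdot\fun(L,H)$), and c) is Proposition~\ref{direct}a once the trivial action collapses $G$ to a direct product.

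The only divergence is in part b). The paper cites Proposition~\ref{prop:RB-Hom}b directly: since the base group $\Fun(L,H)$ is normal in $G$, the projection $lf\mapsto l$ is a group homomorphism, so $B(lf)=\varphi(l)$ is a homomorphism from $G$ onto an abelian subgroup of $G$ and hence an RB-operator, with no normal-form manipulation needed. You instead rewrite $lf$ in the form $f^{l^{-1}}l$ and invoke the semidirect-product proposition of Section~4 with $C=\varphi$, using Proposition~\ref{prop:RB-Hom}b only to certify that the endomorphism $\varphi$ of the abelian group $L$ is an RB-operator on $L$. Both reductions are sound; your identity $lf = f^{l^{-1}}l$ is correct under the action convention $(f^{a})^{b}=f^{ab}$, and the coordinate bookkeeping you flag as the delicate point is handled properly. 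The paper's route is one step shorter and avoids that bookkeeping entirely, while yours has the small side benefit that the semidirect-product proposition also identifies the induced group $G_B\cong \Fun(L,H)\rtimes L_{\varphi}$ at no extra cost.
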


\begin{proof}
a) Since $G$ has exact factorization $G = L \cdot \Fun(L, H)$ or $G = L \cdot \fun(L, H)$,
then the result follows from Example~\ref{exm:split}.

b) Follows from Proposition~\ref{prop:RB-Hom}b.

c) Follows from Proposition~\ref{direct}.
\end{proof}

Non-trivial examples of RB-operators on $\Fun(L, H)$ and
on $\fun(L, H)$ can be found by Theorem~\ref{theo:directProduct}.

It is well-known that any extension of a group~$H$ by a group~$L$
can be embedded into the Cartesian wreath product $H \bar{\wr} L$,
it is the Frobenius embedding.

For extensions of RB-groups we can formulate
\begin{question}
Is it true that any RB-extension of $(H, B_H)$ by $(L, B_L)$
can be embedded into an RB-group $(H \bar{\wr} L, B)$
for some an RB-operator $B$ on $H \bar{\wr} L$?
\end{question}

By analogy with extension and lifting problems for
 automorphism groups of extensions, we can formulate
\begin{question}
Suppose that
$$
1 \to H \to G \to L \to 1
$$
is a short exact sequence of groups.

a) {\it Extension problem}.
Let $(H, B_H)$ be an RB-group. Does there exist an RB-group $(G, B)$
such that $(H, B_H)$ is its RB-subgroup?

b) {\it Lifting problem}.
Let $(L, B_L)$ be an RB-group. Do there exist an RB-group $(G, B)$
and an epimorphism $(G, B)\to (L, B_L)$ such that $B_L$ is induced by $B$?
\end{question}

\section{Connection with associated Lie ring}

The main motivation of the authors L. Guo et al.~\cite{Guo2020}
to introduce Rota---Baxter operators on groups was the connection
with Rota---Baxter operators on Lie algebras, when the initial group is Lie one.
Let us consider another known construction of Lie ring by a given (not necessary Lie) group.

Let $G$ be a group, consider its lower central series
$G_i$, $i=1,2,\ldots$, where
$G_1 = G$ and $G_{i+1} = [G,G_i]$.
The associated graded abelian group
$L(G) = \bigoplus_{n\geq1}G_n/G_{n+1}$
has the structure of Lie ring under the product
$[xG_{i+1},yG_{j+1}] = [x,y]G_{i+j+1}$, where $[x,y]$ is usual commutator.

\begin{proposition}\label{prop:LieRing}
Let $G$ be a group and let $B$ be a Rota---Baxter operator on $G$ such that
$B(G_n)\subset G_n$ for all $n\geq1$.
Then $R$ defined as $R(xG_{i+1}) = B(x)G_{i+1}$ is a Rota---Baxter operator
of weight~1 on the Lie ring $L(G)$.
\end{proposition}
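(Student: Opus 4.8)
The plan is to construct $R$ in three stages: first verify that the formula $R(xG_{i+1}) = B(x)G_{i+1}$ yields a well-defined additive map on each graded piece $G_i/G_{i+1}$, then reduce the weight-$1$ identity to homogeneous arguments, and finally verify it by projecting a single group identity into the associated graded. The crucial preliminary fact is that $B$ does not change an element modulo the next term of the series: for any $z \in G$ and any $c \in G_{i+1}$ one has $B(zc) \equiv B(z) \pmod{G_{i+1}}$. To see this I would apply~\eqref{RB} with $g = z$ and $h = B(z)^{-1}cB(z)$; then $gB(g)hB(g)^{-1} = zc$, so $B(zc) = B(z)B(h)$. Since $G_{i+1}$ is normal we have $h \in G_{i+1}$, whence $B(h) \in B(G_{i+1}) \subseteq G_{i+1}$ by hypothesis, and the claim follows. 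This immediately shows $R$ is well defined on $G_i/G_{i+1}$.

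For additivity, \eqref{RB} gives $B(x)B(y) = B(xB(x)yB(x)^{-1})$, and for $x,y \in G_i$ the element $xB(x)yB(x)^{-1} = xy\,[y,B(x)^{-1}]$ differs from $xy$ by a commutator in $[G_i,G_i] \subseteq G_{2i} \subseteq G_{i+1}$; combined with the preliminary fact this yields $B(xy) \equiv B(x)B(y) \pmod{G_{i+1}}$, i.e. $R$ is a homomorphism of the abelian group $G_i/G_{i+1}$ (note $B(x)\in G_i$, so $R$ preserves the grading). Extending componentwise, $R$ becomes a grading-preserving additive, hence $\mathbb{Z}$-linear, endomorphism of $L(G)$. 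Since $R$ is additive and the Lie bracket is biadditive, both sides of the identity $[R(u),R(v)] = R([R(u),v]+[u,R(v)]+[u,v])$ are biadditive in $(u,v)$, so it suffices to verify it for homogeneous $u = xG_{i+1}$ and $v = yG_{j+1}$.

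For the homogeneous case I would exploit the two homomorphisms from~\S3: both $B$ and $B_+$ (where $B_+(g) = gB(g)$) are group homomorphisms from $G_B$ to $(G,\cdot)$. Writing $[x,y]_\circ$ for the commutator of $x$ and $y$ in $G_B$ and using the tautology $g = B_+(g)B(g)^{-1}$ with $g = [x,y]_\circ$, these homomorphism properties give the single group identity
$$
[x,y]_\circ = [\,xB(x),\,yB(y)\,]\,[B(x),B(y)]^{-1},
$$
where the brackets on the right are ordinary commutators in $G$; in particular $[x,y]_\circ \in [G_i,G_j]\subseteq G_{i+j}$. It then remains to project this identity into $G_{i+j}/G_{i+j+1}$. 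Because the class of $xB(x)$ in $G_i/G_{i+1}$ is the sum of the classes of $x$ and $B(x)$, biadditivity of the bracket of $L(G)$ turns the right-hand side into $[\,xG_{i+1}+B(x)G_{i+1},\,yG_{j+1}+B(y)G_{j+1}\,] - [B(x)G_{i+1},B(y)G_{j+1}]$, and after the $[B(x),B(y)]$ contributions cancel this collapses to the class of $[x,y]\,[x,B(y)]\,[B(x),y]$ in $G_{i+j+1}$, that is, to $[u,v]+[u,R(v)]+[R(u),v]$.

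Applying $R$ to the class of $[x,y]_\circ$ and using $B([x,y]_\circ) = [B(x),B(y)]$ (the homomorphism property of $B$) gives $[R(u),R(v)]$ on one side and $R\big([u,v]+[u,R(v)]+[R(u),v]\big)$ on the other, which is precisely the desired weight-$1$ identity. The only real obstacle is the bookkeeping in this last step: one must check that every commutator genuinely lies in $G_{i+j}$, so that passing to $G_{i+j}/G_{i+j+1}$ is legitimate, and that the cancellation of the $[B(x),B(y)]$ term is exact rather than merely modulo higher terms. The displayed group identity is exactly what makes this cancellation clean, reducing the whole proposition to the standard biadditivity of commutators in the associated graded Lie ring.
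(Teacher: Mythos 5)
Your proof is correct, and although it ultimately manipulates the same group element as the paper, it reaches it by a genuinely different and more conceptual route. The paper's proof has two computational cores: it rewrites $[B(x),y]+[x,B(y)]+[x,y]$ modulo $G_{i+j+1}$ as the class of $w=[xB(x),yB(y)]\,[B(y),B(x)]$ by hand, and then verifies the exact identity $B(y)B(x)B(w)=B(x)B(y)$ through a long chain of cancellations using~\eqref{RB}. You instead recognize $w$ as the commutator $[x,y]_\circ$ in $G_B$: since $B$ and $B_+$ are homomorphisms from $G_B$ to $G$ (Proposition~\ref{prop:Derived}c and \S3) and $g=B_+(g)B(g)^{-1}$, both facts the paper computes become immediate, namely $w=[x,y]_\circ=[B_+(x),B_+(y)]\,[B(x),B(y)]^{-1}$ and $B(w)=[B(x),B(y)]$, and the remaining rewriting is just biadditivity of the graded bracket together with the cancellation of the $[\overline{B(x)},\overline{B(y)}]$ terms. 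Your treatment is also more complete on two points the paper passes over silently: you prove that $R$ is additive on each $G_i/G_{i+1}$ (which is needed for $R$ to be a $\mathbb{Z}$-linear operator on $L(G)$ and for the reduction of the weight-$1$ identity to homogeneous arguments), and your well-definedness argument (taking $h=B(z)^{-1}cB(z)$ in~\eqref{RB}) is a clean alternative to the paper's appeal to Lemma~\ref{lem:elementary}d. What the paper's route buys is self-containedness, as it uses nothing beyond~\eqref{RB}; what yours buys is brevity and an explanation of where the otherwise mysterious element $w$ comes from: it is the $\circ$-commutator of $x$ and $y$ viewed through the factorization $g=B_+(g)B(g)^{-1}$.
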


\begin{proof}
Given $h\in G_i$ and $g\in G_{i+1}$,
it is easy to check by Lemma~\ref{lem:elementary}d that
$B(h)^{-1}B(hg)\in G_{i+1}$, so $R$ is well-defined.
Let $x\in G_i$ and $y\in G_j$. We have to state the equality
$$
[R(xG_{i+1}),R(yG_{j+1})]
 = R( [R(xG_{i+1}),yG_{j+1}] + [xG_{i+1},R(yG_{j+1})] + [xG_{i+1},yG_{j+1}] ),
$$
which equals in terms of $B$ to the following one,
\begin{equation}\label{LieRing}
B(x)^{-1}B(y)^{-1}B(x)B(y)G_{i+j+1}
 = B( [B(x),y] + [x,B(y)] + [x,y] )G_{i+j+1}.
\end{equation}
Applying the commutator properties and the fact that $G_{i+j}/G_{i+j+1}$ is abelian,
we may rewrite the inner expression inside the brackets
from the right-hand side of~\eqref{LieRing} modulo $G_{i+j+1}$ as follows,
\begin{multline*}
[B(x),y] + [x,B(y)] + [x,y] \\
 = \underline{ \underline{ [B(x),y]^{B(y)} }} + \underline{[x,B(y)]^{B(x)} + [x,y]^{B(y)B(x)} }
 + \underline{ \underline{ [B(x),B(y)] }} + [B(y),B(x)] \\
 = \underline{[x,yB(y)]^{B(x)} } + \underline{ \underline{ [B(x),yB(y)] } } + [B(y),B(x)]
 = [xB(x),yB(y)] + [B(y),B(x)].
\end{multline*}
The formula~\eqref{LieRing} is a consequence of the equalities fulfilled in $G$
\begin{multline*}
B(y)B(x)B([xB(x),yB(y)] \cdot [B(y),B(x)]) \\
 = B(yB(y)xB(y)^{-1})B([xB(x),yB(y)] \cdot [B(y),B(x)]) \\
 = B\big( yB(y)x\underline{B(y)^{-1}B(y)}
 B(x)(xB(x))^{-1}(yB(y))^{-1}xB(x)y\underline{B(y)} \\
 \times \underline{B(y)^{-1}}B(x)^{-1}B(y)\underline{B(x)B(x)^{-1}}B(y)^{-1} \big) \\
 = B( yB(y)\underline{xB(x)B(x)^{-1}x^{-1}}
   B(y)^{-1}y^{-1}xB(x)yB(x)^{-1}\underline{B(y)B(y)^{-1}}) \\
 = B( \underline{yB(y)B(y)^{-1}y^{-1}}xB(x)yB(x)^{-1} )
 = B(x)B(y). \qedhere
\end{multline*}
\end{proof}

\begin{example}
Let $G$ be a 2-step nilpotent group and $B$ is an RB-operator on~$G$
defined as in Proposition~\ref{prop:center-conj}.
Then by Proposition~\ref{prop:LieRing} we get an RB-operator
$R\colon L(G)\to L(G)$, where $L(G) = G/[G,G]\oplus [G,G]$.
It is easy to verify that $R = -\id$.
\end{example}

\section{Rota---Baxter operators on finite simple groups}

Given a group $G$, an automorphism~$\psi$ of $G$ is called fixed-point-free, if
$\psi(g) = g$ implies $g = e$.
Let us state a group analogue of Proposition 2.21 from~\cite{BurdeGubarev} proven for Lie algebras
(see also~\cite{BelaDrin82}).

\begin{theorem}\label{thm:invertibleRB}
Let $G$ be a finite non-abelian simple group.
Let $B$~be a Rota---Baxter operator on~$G$ such that $\ker B = \{e\}$.
Then $B(g) = g^{-1}$.
\end{theorem}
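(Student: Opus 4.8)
The plan is to exploit that, by Proposition~\ref{prop:Derived}c and the remark recorded just after it, both $B$ and $B_+$ are group homomorphisms from $G_B$ to $G$. Since $\ker B=\{e\}$, the homomorphism $B\colon G_B\to G$ is injective, so it restricts to an isomorphism $G_B\xrightarrow{\sim}\Imm(B)$. Because $G_B$ and $G$ share the same underlying set, I would then conclude that $\Imm(B)=G$, i.e. that $B$ is a bijection; for finite $G$ this is immediate from injectivity by a cardinality count.

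Once $B$ is known to be bijective, I would turn to $\ker(B_+)$. As recalled in the proposition above (part b), one has $\ker(B_+)\unlhd\Imm(B)$, and with $\Imm(B)=G$ this says $\ker(B_+)\unlhd G$. Since $G$ is simple, this normal subgroup must be either $\{e\}$ or all of $G$, and this dichotomy drives the whole argument.

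If $\ker(B_+)=G$, then $B_+(g)=gB(g)=e$ for every $g\in G$, which is precisely $B(g)=g^{-1}$, and the statement follows. So the substantive task is to exclude the case $\ker(B_+)=\{e\}$. In that case $B_+$ is injective as well, hence again a bijection, so that $B_+\colon G_B\xrightarrow{\sim}G$ is an isomorphism. I would then form the composite $\psi=B_+\circ B^{-1}\colon G\to G$, which as a composition of isomorphisms is an automorphism of $G$. Writing $a=B(g)$ with $g=B^{-1}(a)$, one computes $\psi(a)=B_+(g)=gB(g)=g\,a$; thus $\psi(a)=a$ forces $g=e$, and hence $a=B(e)=e$ by Lemma~\ref{lem:elementary}a. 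Therefore $\psi$ fixes only the identity, i.e. $\psi$ is a fixed-point-free automorphism of the nontrivial group $G$, contradicting the hypothesis. This rules out $\ker(B_+)=\{e\}$ and leaves $B(g)=g^{-1}$.

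The step I expect to be the main obstacle is the surjectivity of $B$, namely the passage $\Imm(B)=G$: for finite $G$ it is immediate, but in the general case it is exactly what is needed in order to view $\ker(B_+)$ as a normal subgroup of all of $G$ and to invoke simplicity. The only other point requiring care is verifying that $\psi=B_+\circ B^{-1}$ is genuinely fixed-point-free, and this reduces to the short computation $\psi(a)=g\,a$ carried out above.
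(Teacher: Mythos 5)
Your proof is correct and takes essentially the same route as the paper's: identify $G_B$ with $G$ via the injective homomorphism $B$, use simplicity to reduce to the dichotomy $\ker(B_+)\in\{\{e\},G\}$, and eliminate the trivial case by exhibiting a fixed-point-free automorphism composed from $B^{-1}$ and $B_+$ (the paper takes $\varphi=B^{-1}B_+$ on $G_B$ where you take $\psi=B_+\circ B^{-1}$ on $G$, a cosmetic difference). The surjectivity concern you raise is not addressed in the paper either---it simply asserts $G_B\cong G$ from injectivity of $B$---so your argument is no weaker than the original on that point.
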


\begin{proof}
Since $B$ is a homomorphism from $G_B$ to $G$ with trivial kernel,
we conclude that $G_B\cong G$, and $G_B$ is also simple.
Suppose that $\ker(B_+)\neq G_B$, where $B_+(g) = gB(g)$ is a~homomorphism from $G_B$ to $G$.
By simplicity of $G_B$, we obtain that $\ker B_+ = \{e\}$.
So, we have an automorphism $\varphi = B^{-1}B_+$ of the simple group $G_B$.
Note that $\varphi$ is a fixed-point-free. Indeed,
$g = \varphi(g) = B^{-1}(gB(g))$ would imply the equality $B(g) = gB(g)$, i.\,e., $g = e$.
By~\cite{Rowley}, a finite group admitting a fixed-point-free automorphism is solvable.
We have a contradiction.
\end{proof}

\begin{corollary}
Let $G$ be a finite group and let $B$ be an RB-operator on~$G$.
If the group $G_B$ is non-abelian simple, then $B$ is elementary and $G\cong G_B$.
\end{corollary}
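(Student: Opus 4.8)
The plan is to exploit that $B$ is a group homomorphism out of the simple group $G_B$, and to read off $B$ from the size of its kernel. By Proposition~\ref{prop:Derived}c the map $B\colon G_B\to G$ is a homomorphism, and $\ker B$ is normal in $G_B$ (as established earlier). Since $G_B$ is simple, $\ker B$ is either $\{e\}$ or the whole of $G_B$, and I would split the argument along these two cases.

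First I would dispose of $\ker B=G_B$. Then $B(g)=e$ for all $g$, that is $B=B_0$, which is elementary; and the induced product degenerates to $g\circ h=gB(g)hB(g)^{-1}=gh$, so $G_B$ and $G$ are literally the same group. Hence $G\cong G_B$ is non-abelian simple, and the statement holds in this case.

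In the remaining case $\ker B=\{e\}$ the homomorphism $B\colon G_B\to G$ is injective; since $G_B$ and $G$ share the same finite underlying set, $B$ is a bijection, hence an isomorphism, so $G\cong G_B$ is a finite non-abelian simple group. To pin down $B$ I would apply Theorem~\ref{thm:invertibleRB}: it requires $G$ simple, $\ker B=\{e\}$, and $G$ without fixed-point-free automorphisms. The first two are now in hand, and the third holds because a finite non-abelian simple group is non-solvable, whereas a finite group admitting a fixed-point-free automorphism is solvable. The theorem then gives $B(g)=g^{-1}$, i.e. $B=B_{-1}$, again elementary.

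The hard part is precisely the claim that a finite non-abelian simple group admits no fixed-point-free automorphism; this is the same input already hidden inside Theorem~\ref{thm:invertibleRB}, and it rests on the deep result (via the classification of finite simple groups) that finite groups possessing a fixed-point-free automorphism are solvable. If one prefers to avoid quoting Theorem~\ref{thm:invertibleRB}, one can instead rerun its proof in the case $\ker B=\{e\}$: here $\ker B_+\unlhd G_B$ is again trivial or everything, the latter giving $gB(g)=e$ and hence $B=B_{-1}$ at once, while the former makes $\varphi=B^{-1}B_+$ a fixed-point-free automorphism of the simple group $G_B$, which is the situation ruled out above.
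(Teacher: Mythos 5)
Your proposal is correct and follows essentially the same route as the paper: split on $\ker B \unlhd G_B$ being trivial or all of $G_B$ by simplicity, get $B=B_0$ with $G=G_B$ in the degenerate case, and in the injective case deduce $G\cong G_B$ and invoke Theorem~\ref{thm:invertibleRB} together with Rowley's solvability result to force $B=B_{-1}$. The paper packages that last step as Corollary~\ref{coro:invertibleRB}a, which is exactly the argument you spelled out, so there is no substantive difference.
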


\begin{proof}
Since $\ker B$ is a normal subgroup of $G_B$ and $G_B$ is simple,
we have two cases: either $\ker B = G$ or $\ker B = \{e\}$.
If $\ker B = G$, then $B$ is elementary and $G\cong G_B$.
If $\ker B = \{e\}$, then $G\cong G_B$ by Theorem~\ref{thm:invertibleRB}.
\end{proof}

\begin{corollary}\label{coro:invertibleRB}
Let $G$ be a finite non-abelian simple group.
If $G$ is not factorizable, then $G$ is RB-elementary.
\end{corollary}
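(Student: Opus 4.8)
The plan is to treat the two parts separately: derive (a) directly from Theorem~\ref{thm:invertibleRB}, and then feed (a) into the proof of (b).

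For part (a), Theorem~\ref{thm:invertibleRB} already yields $B(g)=g^{-1}$ as soon as we know that $G$, being a finite non-abelian simple group, admits no fixed-point-free automorphism. So the entire content of (a) is this one group-theoretic fact. I would justify it as follows. An inner automorphism $\mathrm{conj}_g$ fixes $g$ itself, hence is fixed-point-free only for $g=e$, so only outer automorphisms are candidates; and, more decisively, it is known (a consequence of the classification of finite simple groups) that any finite group admitting a fixed-point-free automorphism is solvable. Since a non-abelian simple group is not solvable, it carries no such automorphism, and Theorem~\ref{thm:invertibleRB} applies verbatim. I expect the only real obstacle in (a) to be this external input on fixed-point-free automorphisms; once it is granted, everything else is immediate.

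For part (b), let $B$ be an arbitrary RB-operator on $G$. The starting point is the factorization $G=\Imm(B_+)\Imm(B)$ from~\eqref{ImageFactorization}. Because $G$ admits no proper factorization, the two factors cannot both be proper, so either $\Imm(B)=G$ or $\Imm(B_+)=G$. I would reduce the second case to the first by means of the involution $B\mapsto\widetilde{B}$: substituting $h=g^{-1}$ into $\widetilde{B}(g)=g^{-1}B(g^{-1})$ gives $\widetilde{B}(g)=hB(h)=B_+(h)$, so $\Imm(\widetilde{B})=\Imm(B_+)$, and by Lemma~\ref{lem:tilde} the map $\widetilde{B}$ is again an RB-operator on $G$. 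Hence, after replacing $B$ by $\widetilde{B}$ if necessary, I may assume $\Imm(B)=G$, that is, $B$ is surjective.

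Now finiteness finishes the argument. By Proposition~\ref{prop:Derived}c the map $B$ is a group homomorphism $G_B\to G$, and $G_B$ has the same underlying set as $G$, hence the same (finite) order; a surjective homomorphism between finite groups of equal order is bijective, so $\ker(B)=\{e\}$. Part~(a) then forces $B=B_{-1}$. Unwinding the reduction: if the original operator satisfied $\Imm(B_+)=G$, we obtained $\widetilde{B}=B_{-1}$, and therefore $B=\widetilde{\widetilde{B}}=\widetilde{B_{-1}}=B_0$. In either branch $B$ is elementary, so $G$ is RB-elementary, as claimed. I anticipate that the only delicate point here is the bookkeeping of the involution, so that both branches of the factorization correctly land on an elementary operator.
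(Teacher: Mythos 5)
Your proof is correct and follows essentially the same route as the paper: part (a) rests on the cited fact that a finite group admitting a fixed-point-free automorphism is solvable together with Theorem~\ref{thm:invertibleRB}, and part (b) combines the factorization~\eqref{ImageFactorization} with part (a) and the involution $\widetilde{B}$ of Lemma~\ref{lem:tilde}. The only difference is presentational: the paper argues (b) by contradiction in one line (a nonelementary $B$ would force both factors in~\eqref{ImageFactorization} to be proper), whereas you run the same ingredients directly and spell out the finiteness counting and the $\widetilde{B}$ bookkeeping that the paper leaves implicit.
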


\begin{proof}
Let $B$ be a nonelementary Rota---Baxter operator on such a group $G$.
By Theorem~\ref{thm:invertibleRB} we have a proper factorization~\eqref{ImageFactorization},
it is a contradiction.
\end{proof}

List of finite simple non-factorizable groups is presented in \cite[Table 4.1]{Kirtland}.
It includes some finite groups of Lie type (classical and exceptional) and 15 sporadic groups.
We are going to describe all Rota---Baxter operators on all sporadic groups.

\begin{table}[h]
\begin{center}
Table: exact factorizations of sporadic groups~\cite{Giudici} \\
\smallskip
\begin{tabular}{c|c|c}
$G$ & $A$ & $B$ \\
\hline
$M_{11}$ & $M_{10}$ & $\mathbb{Z}_{11}$ \\
         & $M_{9}.2$ & $\mathbb{Z}_{11}\rtimes\mathbb{Z}_5$ \\
$M_{12}$ & $M_{11}$ & $A_4$, $D_{12}$, $\mathbb{Z}_{6}\times\mathbb{Z}_2$ \\
         & $M_{9}.2$ & $\mathrm{PSL}_2(11)$ \\
$M_{23}$ & $M_{22}$ & $\mathbb{Z}_{23}$ \\
         & $\mathbb{Z}_{23}\rtimes\mathbb{Z}_{11}$ & $\mathrm{P\Sigma L}_3(4)$, $2^4\rtimes A_7$ \\
$M_{24}$ & $M_{23}$ & $A_4\times\mathbb{Z}_2$, $D_{24}$, $D_8\rtimes \mathbb{Z}_3$ \\
         & $M_{23}$ & $S_4$ \\
         & $M_{23}$ & $B_{24}$ \\
         & $\mathrm{PSL}_2(23)$ & $\mathrm{P\Sigma L}_3(4)$, $2^4\rtimes A_7$
\end{tabular}
\end{center}
\end{table}

\vspace{-0.75cm}

\begin{theorem} \label{thm:sporadic}
Let $G$ be a sporadic simple group.
If $G = M_{22}$ or $G$ is not a Mathieu group, then $G$ has only elementary RB-operators.
If $G\in\{M_{11},M_{12},M_{23},M_{24}\}$, then all RB-operators on $G$ are splitting (see Table).
\end{theorem}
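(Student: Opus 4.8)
The plan is to reduce the whole classification to the theory of exact factorizations and then to feed in the known factorization data for sporadic groups.

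First I would record the general mechanism on a finite non-abelian simple group $G$. If an RB-operator $B$ has $\ker B=\{e\}$ then $B(g)=g^{-1}$ by Corollary~\ref{coro:invertibleRB}a; applying the same corollary to $\widetilde{B}$ (Lemma~\ref{lem:tilde}) shows that $\ker(B_+)=\{e\}$ forces $B=B_0$. Hence a non-elementary $B$ has $\{e\}\neq\Imm(B)\neq G$ and $\{e\}\neq\Imm(B_+)\neq G$, so \eqref{ImageFactorization} is a proper factorization $G=\Imm(B_+)\Imm(B)$. The crucial observation I would then prove is that $B$ is splitting if and only if this factorization is exact. Indeed, by Proposition~\ref{SplittingCond} the operator $B$ is splitting iff $\Imm(B_+)\subseteq\ker B$, and since $\ker B\unlhd\Imm(B_+)$ this means that $Q:=\Imm(B_+)/\ker B$ is trivial; from the first isomorphism theorem for $B\colon G_B\to G$ one gets $|\Imm(B)|=|G|/|\ker B|$, so \eqref{ImageFactorization} yields $|\Imm(B_+)\cap\Imm(B)|=|\Imm(B_+)|\,|\Imm(B)|/|G|=|\Imm(B_+)|/|\ker B|=|Q|$ (consistently with \eqref{FactorIso}). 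Thus $Q$ is trivial exactly when $\Imm(B_+)\cap\Imm(B)=\{e\}$, i.e.\ when $G=\Imm(B_+)\Imm(B)$ is exact.

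Second, I would dispose of the groups that are not factorizable. By \cite[Table 4.1]{Kirtland} exactly $15$ sporadic groups are non-factorizable, and for each of them Corollary~\ref{coro:invertibleRB}b already gives that every RB-operator is elementary (a non-elementary one would contradict non-factorizability through the proper factorization above). For the remaining, factorizable, sporadic groups I would invoke \cite{Giudici}: the proper exact factorizations of sporadic groups are precisely those in the Table and occur only for $M_{11},M_{12},M_{23},M_{24}$. Combined with the equivalence of the first step and Example~\ref{exm:split}, each Table entry gives a splitting RB-operator, while conversely any non-elementary splitting operator forces a proper exact factorization; consequently $M_{22}$ and every factorizable non-Mathieu group, having no proper exact factorization, admits no non-elementary splitting operator.

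The remaining and hardest task is to rule out non-splitting non-elementary operators, and by the previous paragraph this only has to be done over the finitely many factorizable sporadic groups. Such a $B$ would produce a proper but non-exact factorization $G=HL$ with $H=\Imm(B_+)$, $L=\Imm(B)$, carrying normal subgroups $\ker B\unlhd H$ and $\ker(B_+)\unlhd L$ with trivial intersection and isomorphic nontrivial quotient $Q$, where $|Q|=|H\cap L|$, $|\ker B|=|H|/|H\cap L|$ and $|\ker(B_+)|=|L|/|H\cap L|$. I would eliminate this configuration case by case: cover $H$ and $L$ by maximal subgroups of a maximal factorization (classification of Liebeck--Praeger--Saxl, refined for sporadic groups in \cite{Giudici}) and exploit the (almost) simple structure of the factors. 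Whenever a factor has no proper nontrivial normal subgroup of the required index $|H\cap L|$, the quotient $Q$ must collapse to the trivial group, which by the first step means $B$ is splitting, or else $H\leq L$ contradicts properness. The main obstacle is carrying out this normal-subgroup-and-index bookkeeping uniformly over all factorizations of the heavier factorizable sporadic groups; for the largest of them I expect to need explicit subgroup data from the ATLAS and a computer-algebra verification.

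Finally, for $G\in\{M_{11},M_{12},M_{23},M_{24}\}$ every operator is then splitting (the elementary ones $B_0$ and $B_{-1}$ corresponding to the trivial factorizations), and I would identify the proper splitting operators with the exact factorizations of the Table, reducing the list modulo the two available symmetries — conjugation by $\Aut(G)$ (Lemma~\ref{lem:Aut}) and the involution $B\mapsto\widetilde{B}$ (Lemma~\ref{lem:tilde}) — to obtain the stated description.
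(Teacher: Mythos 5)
Your overall strategy is the same as the paper's: a non-elementary $B$ has $\ker B\neq\{e\}$ and $\ker(B_+)\neq\{e\}$, hence yields the proper factorization \eqref{ImageFactorization}; splitting operators are tied to exact factorizations; and Giudici's classification is the external input. Your first two steps are correct, and your explicit equivalence ``$B$ is splitting if and only if $G=\Imm(B_+)\Imm(B)$ is exact'', proved via the order count $|\Imm(B_+)\cap\Imm(B)|=|\Imm(B_+)|\,|\Imm(B)|/|G|=|Q|$, is a clean packaging of what the paper only establishes in the direction ``exact implies splitting'' inside its proof.

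The genuine gap is your third step, which is exactly the decisive one: you never actually eliminate non-splitting non-elementary operators, but defer this to an open-ended ``normal-subgroup-and-index bookkeeping'' over all factorizations, anticipating ATLAS data and computer-algebra verification. As written, the theorem is therefore not proved for any factorizable sporadic group --- neither the ``only elementary'' claim for $M_{22}$ and the factorizable non-Mathieu groups, nor the ``all splitting'' claim for $M_{11},M_{12},M_{23},M_{24}$. What you are missing is that your own collapse remark already finishes essentially everything, because Giudici's tables record the isomorphism type of every factor: every non-exact factorization of a sporadic group has at least one \emph{simple} factor, with the single exception $He=(\mathrm{Sp}(4,4).2)\cdot(7^2\rtimes SL_2(7))$. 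Indeed, if $\Imm(B)$ is simple, then $\{e\}\neq\ker(B_+)\unlhd\Imm(B)$ forces $\ker(B_+)=\Imm(B)$, so $Q$ is trivial by \eqref{FactorIso} and $B$ is splitting; if instead $\Imm(B_+)$ is simple, replace $B$ by $\widetilde{B}$ (Lemma~\ref{lem:tilde}), which interchanges the two images since $\Imm(\widetilde{B})=\Imm(B_+)$ and $\Imm(\widetilde{B}_+)=\Imm(B)$. Thus no computer search over the heavy groups is needed; the only case requiring any computation is $He$, where $|\ker B|=|He|/|\mathrm{Sp}(4,4).2|=2058$ and $|\Imm(B_+)|=8\cdot 2058$ force $\ker(B_+)$ to be a normal subgroup of index $8$ in $\mathrm{Sp}(4,4).2$, which is impossible because $\mathrm{Sp}(4,4)$ is simple and has no proper subgroup of such small index (the paper cites its maximal subgroups). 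Identifying $He$ as the unique problematic factorization and disposing of it by this one short argument is the content your proposal lacks.
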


\begin{proof}
Our main tool in the proof is the list of all factorizations of sporadic groups~\cite{Giudici}.
Suppose that $B$ is a nonelementary RB-operator on $G$.
Thus, $\ker B\neq\{e\}$ and $\ker(B_+)\neq\{e\}$ by Corollary~\ref{coro:invertibleRB}.
We have a factorization~\eqref{ImageFactorization}.
Suppose that one of the factors in~\eqref{ImageFactorization} is simple.
We may assume that $\Imm(B)$ is simple, otherwise we consider $\widetilde{B}$ and $\Imm(\widetilde{B})$.
Since $\ker B_+$ is a~normal subgroup of $\Imm(B)$, we conclude that $\ker B_+ = \Imm(B)$.
By~\eqref{FactorIso}, $\Imm B_+ = \ker B$. The last equality due to Proposition~\ref{SplittingCond}
implies that $B$ is splitting.

Suppose initially that a factorization~\eqref{ImageFactorization} is exact.
Since $|G| = |\ker B|\cdot|\Imm B|$, we again get the equality $\ker B_+ = \Imm(B)$,
and so $B$ is splitting.

Now, we look through all factorizations of sporadic groups.
In Tables~1 and~2~\cite{Giudici} we have only one non-exact factorization
with both non-simple factors, it is
$$
He = (\mathrm{Sp}(4,4).2)\cdot(7^2\rtimes SL_2(7)).
$$
Let $\Imm(B) = \mathrm{Sp}(4,4).2$ and $\Imm(B_+) = 7^2\rtimes SL_2(7)$, then
$|\ker B| = |He|/|\Imm(B)| = 2058$.
Since $|\Imm(B_+)| = 8\cdot2058$, we get by~\eqref{FactorIso}
that $\ker(B_+)$ is a~subgroup of $\Imm(B)$ of index~8.
Analyzing maximal subgroups of $\mathrm{Sp}(4,4)$~\cite{Sp44},
we conclude that this case for a~Rota---Baxter operator~$B$ is not possible.
\end{proof}

\section*{Acknowledgments}
Authors are grateful to participants of the seminar ``\'{E}variste Galois'' at
Novosibirsk State University for fruitful discussions.
The second author is grateful to Alexey Staroletov for the helpful discussions.

Vsevolod Gubarev is supported by the Program of fundamental scientific researches of
the Siberian Branch of Russian Academy of Sciences, I.1.1, project 0314-2019-0001.

Valery G. Bardakov is supported by Ministry of Science and Higher Education of Russia
(agreement No. 075-02-2020-1479/1).

The results of~\S3,~\S5,~\S7, and~\S8 are supported by Ministry of Science
and Higher Education of Russia (agreement No. 075-02-2020-1479/1),
while the results of~\S4,~\S6,~\S9, and~\S10 are supported by the Program
of fundamental scientific researches of
the Siberian Branch of Russian Academy of Sciences, I.1.1, project 0314-2019-0001.

\medskip

\noindent Valeriy G. Bardakov \\
Sobolev Institute of Mathematics \\
Acad. Koptyug ave. 4, 630090 Novosibirsk, Russia \\
Novosibirsk State University \\
Pirogova str. 2, 630090 Novosibirsk, Russia \\
Novosibirsk State Agrarian University \\
Dobrolyubova str., 160, 630039 Novosibirsk, Russia \\
Regional Scientific and Educational Mathematical Center of Tomsk State University \\
Lenin ave. 36, 634009 Tomsk, Russia \\
email: bardakov@math.nsc.ru

\medskip
\noindent Vsevolod Gubarev \\
Sobolev Institute of Mathematics \\
Novosibirsk State University \\
e-mail: wsewolod89@gmail.com

\end{document}